\renewcommand{\newframedtheorem}[1]{%
\theoremprework{\framed\vspace{-1.5ex}}%
\theorempostwork{\vspace{-2ex}\endframed}%
\newtheorem@i{#1}%
}
\newtheorem{remark}[proposition]{Remark}
\newtheoremstyle{proof}%
{\item[\theorem@headerfont\hskip\labelsep ##1\theorem@separator]}%
{\item[\ifx\@empty##3\else\theorem@headerfont\hskip \labelsep ##1\ of\ ##3\theorem@separator\fi]}
\theoremstyle{proof}
\newtheorem{proof}{Proof}
\theoremstyle{plain}
\def\bfseries{\fontseries \bfdefault \selectfont \boldmath}
\def\@fnsymbol#1{\ensuremath{\ifcase#1\or *\or **\or {**}* \or {**}{**}\else\@ctrerr\fi}}
\newcommand{\abs}[1]{\ensuremath{\left|#1\right|}}
\newcommand{\br}[1]{\ensuremath{\left(#1\right)}}
\newcommand{\bv}[1]{\ensuremath{\left[#1\right]}}
\renewcommand{\d}{\ensuremath{\,\mathrm{d}}}
\newcommand{\lnorm}[2][2]{\ensuremath{\left\|#2\right\|_{L^{{\scriptsize#1}}}}}
\newcommand{\norm}[1]{\ensuremath{\left\|#1\right\|}}
\newcommand{\seminorm}[1]{\ensuremath{\left[#1\right]}}
\newcommand{\seminormv}[1]{\ensuremath{\left\llbracket#1\right\rrbracket}}
\newcommand{\seqn}[2][k]{\ensuremath{\br{{#2}_{#1}}_{#1\in\N}}}
\newcommand{\set}[1]{\ensuremath{\left\{#1\right\}}}
\newcommand{\sets}[2]{\ensuremath{\left\{\left.#1\,\right|#2\right\}}}
\newcommand{\sett}[2]{\ensuremath{\left\{#1\left|\,#2\right.\right\}}}
\renewcommand{\sp}[1]{\ensuremath{\left\langle#1\right\rangle}}
\numberwithin{equation}{section}
\DeclareFontFamily{U}{matha}{\hyphenchar\font45}
\DeclareFontShape{U}{matha}{m}{n}{
      <5> <6> <7> <8> <9> <10> gen * matha
      <10.95> matha10 <12> <14.4> <17.28> <20.74> <24.88> matha12
      }{}
\DeclareSymbolFont{matha}{U}{matha}{m}{n}
\DeclareFontFamily{U}{mathx}{\hyphenchar\font45}
\DeclareFontShape{U}{mathx}{m}{n}{
      <5> <6> <7> <8> <9> <10>
      <10.95> <12> <14.4> <17.28> <20.74> <24.88>
      mathx10
      }{}
\DeclareSymbolFont{mathx}{U}{mathx}{m}{n}
\DeclareMathSymbol{\oast}{2}{matha}{"66}
\DeclareMathSymbol{\bigoast}{1}{mathx}{"C6}
\renewcommand{\a}{\ensuremath{\alpha}}
\renewcommand{\angle}{\sphericalangle}
\renewcommand{\C}{\ensuremath{\mathbb{C}}}
\newcommand{\Cia}[1][1]{\ensuremath{C_{\mathrm{ia}}^{#1}}}
\newcommand{\D}[3][\g]{{#1}(u+#2)-{#1}(u+#3)}
\newcommand{\dg}{{\gamma}'}
\renewcommand{\dh}{{h}'}
\newcommand{\E}[1][p,q]{\ensuremath{\mathrm{intM}^{(#1)}}}
\newcommand{\eps}{\ensuremath{\varepsilon}}
\newcommand{\fracabs}[1]{\frac{#1}{\abs{#1}}}
\newcommand{\g}{\gamma}
\newcommand{\gp}{g^{p}}
\renewcommand{\G}[1][p]{\ensuremath{\mathcal G^{(#1)}}}
\newcommand{\N}{\ensuremath{\mathbb{N}}}
\newcommand{\Q}[1][p]{Q^{(#1)}}
\newcommand{\R}{\ensuremath{\mathbb{R}}}
\renewcommand{\Re}[1][\eps]{R_{#1}^{(p)}}
\newcommand{\refeq}[2][=]{\ensuremath{\stackrel{\text{\makebox[0ex][c]{\eqref{eq:#2}}}}{#1}}}
\renewcommand{\rho}{\ensuremath{\varrho}}
\newcommand{\RR}[1][p,q]{R^{(#1)}}
\newcommand{\rzd}{\ensuremath{(\R/\Z,\R^n)}}
\newcommand{\s}{\ensuremath{\sigma}}
\DeclareMathOperator*{\sign}{sign}
\newcommand{\T}[3][\g]{\triangle_{#2,#3}{#1}}
\newcommand{\tg}{\tilde{\gamma}}
\renewcommand{\th}{\ensuremath{\vartheta}}
\newcommand{\tint}{\iiint_{\R/\Z\times D}}
\newcommand{\vth}{\ensuremath{\vartheta}}
\newcommand{\W}[1][(3p-2)/q-1,q]{\ensuremath{W^{\scriptstyle #1}}}
\newcommand{\Wia}[1][(3p-2)/q-1,q]{\ensuremath{W_{\mathrm{ia}}^{#1}}}
\newcommand{\Z}{\ensuremath{\mathbb{Z}}}
\title{Towards a regularity theory \\ for integral Menger curvature}
\author{%
 Simon Blatt\thanks{Workgroup Applied Analysis, Karlsruhe Institute of Technology, Kaiserstra{\ss}e 89 -- 93, 76133 Karlsruhe, Germany, \url{simon.blatt@kit.edu}}
 \and
 Philipp Reiter\thanks{Fakult\"at f\"ur Mathematik, Universit\"at Duisburg-Essen, Forsthausweg 2, 47057 Duisburg, Germany, \url{philipp.reiter@uni-due.de}}}
\begin{document}
\maketitle
\begin{abstract}
We generalize the notion of
integral Menger curvature introduced by Gonzalez and Maddocks~\cite{gonz-madd} by decoupling 
the powers in the integrand.
This leads to a new two-parameter family
of knot energies $\E$.

We classify finite-energy curves in terms of Sobolev-Slobodecki{\u\i} spaces.
Moreover, restricting to the range
of parameters leading to a sub-critical Euler-Lagrange equation,
we prove existence of minimizers within any knot class via a
uniform bi-Lipschitz bound.
Consequemtly, $\E$ is a knot energy in the sense of O'Hara.

Restricting to the non-degenerate sub-critical case,
a suitable decomposition of the first variation allows to
establish a bootstrapping argument that leads to $C^{\infty}$-smoothness
of critical points.

%We will first characterize the curves of finite
%energy $\E$ in the sub-critical range $p\in(q+2,2q+1)$ and see that those
%are all injective and regular curves in the Sobolev-Slobodecki{\u\i} space
%$\W[(p-1)/q,q]\rzd$. We  derive a formula for the first variation
%that turns out to be a non-degenerate elliptic operator for the special 
%case $q=2$ --- a fact that seems not to be the case for the original 
%tangent-point energies. 
%This observation allows us to prove that stationary points
%of $\E[p,2]+\lambda\,\text{length}$, $p\in(4,5)$, $\lambda>0$,
%are smooth --- which especially applies to all local minimizers.
\end{abstract}
\tableofcontents

\newcommand{\secA}{Classification of finite-energy curves}
\newcommand{\secB}{Existence of minimizers within knot classes}
\newcommand{\secC}{Differentiability}
\newcommand{\secD}{Regularity of stationary points}

% !TEX root = statpointmenger.tex

\section*{Introduction}
\addcontentsline{toc}{section}{Introduction}

\emph{Imagine a closed curve in the Euclidean space.
Each triple of distinct points on the curve uniquely defines
its {circumcircle} that passes through these three points.
It degenerates to a line if and only if the points are collinear.
The reciprocal of the circumcircle radius can be seen
as some kind of approximate curvature.
How much information on shape and regularity of the curve
can be drawn from the $L^{p}$-norm
of the latter quantity?}

Motivated from applications in microbiology,
Gonzalez and Maddocks~\cite{gonz-madd} investigated
this question for the case $p=\infty$.
They were in search of a notion for the \emph{thickness}
of an embedded curve that, in contrast to
other approaches, e.g.\@ Litherland et al.~\cite{lsdr}, does not require
initial regularity of the respective curves.

Thickness is influenced by both local and global properties of
a curve and is additionally related to the regularity of the curve.
In fact, the thickness of an arc-length parametrized curve
is finite if and only if it is embedded and
has a Lipschitz continuous tangent, i.e., it is $C^{1,1}$,
see Gonzalez et al.~\cite{gmsm}.
Consequently, any curve of finite thickness
parametrized by arc-length
is bi-Lipschitz continuous with a bi-Lipschitz
constant only depending on its thickness.

The latter is particularly interesting in the
context of applications. Instead of
trying to immediately determine the knot type
of a given possibly quite entangled curve, one could first ``simplify'' it
in order to obtain a nicely shaped curve,
having large distances between distant strands.
Such a deformation process could be defined by the gradient flow of a
suitable functional which should prevent the curve
from leaving the ambient knot class,
preserving the bi-Lipschitz property.

This idea was formalized into the concept of
\emph{knot energies} by O'Hara~\cite[Def.~1.1]{oha:en-kn}.
A functional on a given space of knots is
called a {knot energy} if
it is bounded below and self-repulsive,
i.e., it blows up on sequences of embedded curves converging to
a non-embedded limit curve.

Among other functionals Gonzalez and Maddocks~\cite{gonz-madd} also proposed
to investigate the functional
\[ \mathscr M_{p}(\g) := \iiint\limits_{(\R/\Z)^{3}}
 \frac{\abs{\dg(u_{1})}\abs{\dg(u_{2})}\abs{\dg(u_{3})}}{R(\g(u_{1}),\g(u_{2}),\g(u_{3}))^{p}} \d u_{1}\d u_{2}\d u_{3}, \qquad p\in(0,\infty), \]
which is called \emph{integral Menger\footnote{%
Named after \textsc{Karl Menger}, 1902 -- 1985, US-Austrian mathematician,
who used the circumcircle for generalizing geometric concepts
to general metric spaces~\cite{menger}.
He worked on many fields including
distance geometry, dimension theory, graph theory.
Menger was a student of Hahn in Vienna
where he received a professorship in 1927.
Being member of the Vienna Circle, he was also
interested in philosophy and social science.
After emigrating to the USA in 1937,
he obtained a position at Notre Dame, later at Chicago.
See Kass~\cite{kass:menger} for further reading.
}%
curvature}.
Here $\g:\R/\Z\to\R^{n}$ is an absolutely continuous curve and
$R(x,y,z)$ denotes the circumcircle 
of the three points $x,y,z\in\R^{n}$ given by 
\begin{equation}
 R(x,y,z):= \frac{{\abs{y-z}\abs{y-x}\abs{z-x}}}{2\abs{\br{y-x}\wedge\br{z-x}}}=\frac{\abs{y-z}}{2\sin\angle\br{y-x,z-x}}.
\end{equation}
% Note that we only consider scalar products of cross products, so we let
% \[ \sp{a \wedge b,c\wedge d} := \sp{a,b}\sp{c,d}-\sp{a,d}\sp{b,c}
% \qquad\text{for all } a,b,c,d\in\R^{n}. \]

The functionals $\mathscr M_{p}$ have been investigated by
Strzelecki, Szuma\'nska and von der Mosel in~\cite{SM5}
wherein further references can be found.
Their results cover the case $p>3$ where $\mathscr M_{p}$
is known to be a knot energy.
Especially they have been able to show that
finite energy of an arc-length parametrized curve
implies $C^{1,1-3/p}$-regularity
and its image is $C^{1}$-homeomorphic to the circle.
The regularity statement has been sharpened in~\cite{blatt:imcc}.

The element $\mathscr M_{2}$ is referred to as \emph{total Menger curvature}.
Interestingly, it plays an important r\^ole in complex analysis,
more precisely in the proof of
\emph{Vitushkin’s conjecture},
a partial solution to \emph{Painlev\'e's problem}
which asks to determine \emph{removable} sets.
These are compact sets $K\subset\C$
such that for any open $U\subset\C$ containing $K$
and for any bounded analytic function $U\setminus K\to\C$,
the latter can be extended to an analytic function on $U$.
Vitushkin conjectured that a compact set $K$ with positive finite
one-dimensional Hausdorff measure is
removable if and only if it is \emph{purely unrectifiable},
i.e.\@ it intersects every rectifiable curve in a set of measure zero.

A central result in this context is the
the curvature theorem of David and
L\'eger~\cite{leger} stating that one-dimensional Borel sets in $\C$
with finite total Menger curvature are 1-rectifiable.
%for $\mathscr M_{p}$, $p\ne2$, see Lin and Mattila~\cite{lin-mattila}.
Mel{$'$}nikov and Verdera~\cite{melnikov,melnikov-verdera,verdera}
discovered a connection between
$L^{2}$-boundedness of the Cauchy integral operator on
Lipschitz graphs and the Menger curvature.
%A more general setting on metric spaces is discussed in Hahlomaa~\cite{hahlomaa}.
%A characterization of the Cauchy integral in terms of
%analytic capacity and uniform rectifiability is given in
%Mattila et al.~\cite{MMV}.
For further details regarding
Vitushkin’s conjecture for removable sets
we refer to
Dudziak's monography~\cite{dudziak} and references therein.

Menger curvature for higher-dimensional objects has been discussed
in~\cite{kola1,kolaSM,kolaSz,Bkola,kola2,scholtes}.
Further information on the context of the integral Menger curvature
within the field of geometric knot theory and geometric curvature energies
can be found in the recent surveys by Strzelecki and von der Mosel~\cite{SM10,SM11}.

In this article we make a first step towards the
regularity theory of stationary points of {integral Menger curvature}.
Regularity theory for minimizers of certain knot energies
has been developed in~\cite{oha:fam-en,fhw,he:elghf,reiter:rkepdc,reiter:rtme,blatt-reiter2,blatt-reiter3}.
A summary is given in~\cite{blatt-reiter:proc,blatt-reiter:mbmb}.
% Up to now, gradient flows have only been studied for
% a subfamily of O'Hara's energies~\cite{blatt:gfm,blatt:gfoh}.

Unfortunately
the Euler-Lagrange operator of $\mathscr M_{p}$ is not only non-local
but also degenerate.
In order to produce non-degenerate energies,
we embed this family into the two-parameter family of generalized integral Menger curvature
\begin{equation}\label{eq:intM}
 \E(\g):=\iiint\limits_{(\R/\Z)^{3}}
 \frac{\abs{\dg(u_{1})}\abs{\dg(u_{2})}\abs{\dg(u_{3})}}{\RR(\g(u_{1}),\g(u_{2}),\g(u_{3}))}
 {\d u_{1}\d u_{2}\d u_{3}}, \qquad p,q>0,
\end{equation}
where
\begin{equation}
 \RR(x,y,z) :=
 \frac{\br{\abs{y-z}\abs{y-x}\abs{z-x}}^{p}}{\abs{\br{y-x}\wedge\br{z-x}}^{q}}
 = \frac{\abs{y-z}^{p}\abs{y-x}^{p-q}\abs{z-x}^{p-q}}{\sin\angle\br{y-x,z-x}^{q}},
 \qquad x,y,z\in\R^{n}.
\end{equation}
Note that the function $\RR$ is symmetric in all components.
Of course, $\mathscr M_{p} = 2^{p}\E[p,p]$.

The elements of this family are
knot energies %in the sense of O'Hara~\cite{oha:en-kn}
under certain conditions only. More precisely, we will see
in Remark~\ref{rem:non-rep} that
they are punishing self-intersections if and only if
\begin{equation}\label{eq:self-avoiding}
 p\ge\tfrac23q+1.
\end{equation}
On the other hand, these energies can only be finite on closed curves iff
\begin{equation}\label{eq:singular}
 p<q+\tfrac23,
\end{equation}
see Remark~\ref{rem:singular}.
The \emph{sub-critical range},
i.e.\@ the range of parameters that lead to a {sub}-critical Euler-Lagrange equation,\footnote{In contrast, the corresponding range is called \emph{super}-critical by Strzelecki et al.~\cite{SM9}
as it lies above the respective critical value for which the
energy is scale-invariant.}
is given by
\begin{equation}\label{eq:sub-critical}
 p\in\br{\tfrac23q+1,q+\tfrac23} \qquad (q>1),
\end{equation}
and its non-degenerate part (leading to a non-degenerate equation) is
\begin{equation}\label{eq:non-degenerate-sub-critical}
 p\in\br{\tfrac73,\tfrac83}, \qquad q=2.
\end{equation}
These areas are visualized in Figure~\ref{fig:range}.

\begin{figure}[h]
 \centering
 \includegraphics{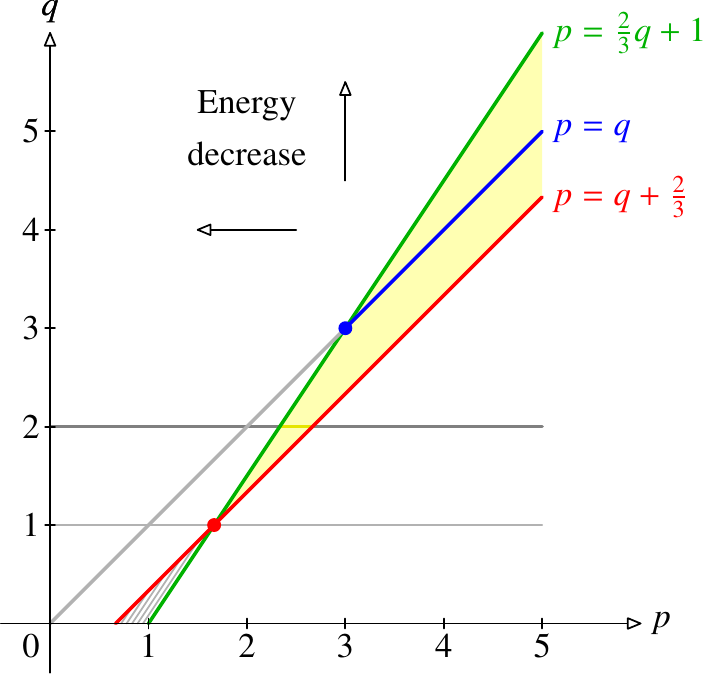}
 \caption{The range of $\E$.
 Above the green line, the integrand is not sufficiently singular to
penalize self-intersections, thus $\E$ is not a knot energy.
On the other hand, below the red line, for $q>1$, the integrand is so singular, that the integral is
either equal to zero or infinite, so
 there are no finite-energy $C^{1}$-knots at all.
The hatched area reveals the strange behavior that there are no finite-energy
$C^{3}$-knots while it takes finite values on polygons.}
 \label{fig:range}
\end{figure}

In~\cite{blatt:imcc}, a characterization of curves with finite $\mathscr M_{p}$ energy was given in terms of function spaces. Using this technique we infer

\begin{theorem}[\secA]\label{thm:energy-space}\hfill\\
 Consider the sub-critical case~\eqref{eq:sub-critical} and
 let $\g \in C^1(\R/\Z,\R^{n})$ be an injective curve parametrized by 
 arc-length.
 Then $\E(\g)<\infty$ if and only if~$\g \in\W$. Moreover,
 one then has, for constants $C,\beta>0$ depending on $p,q$ only,
 \begin{equation}\label{eq:energy-space}
  \norm\g_{\W} \leq C\br{\E(\g) + \E(\g)^\beta}.
 \end{equation}
\end{theorem}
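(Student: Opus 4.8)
The plan is to reduce the whole statement to a two‑sided comparison between the integrand of $\E$ and the Gagliardo difference quotient of the unit tangent $\dg$, and then to run that comparison in the two directions. Set $\sigma:=(3p-2)/q-1$, so that $\W=W^{\sigma,q}(\R/\Z,\R^n)$ and $\g\in\W$ is equivalent to $\dg\in\Wd$; in the sub-critical range~\eqref{eq:sub-critical} one checks $\sigma-1=(3p-2)/q-2\in(1/q,1)$, so $\Wd$ is a genuine fractional space and, by the Morrey embedding, $\Wd\hookrightarrow C^{0,\a}$ with $\a:=\sigma-1-1/q>0$ — which is exactly the range in which the hypothesis $\g\in C^1$ lives. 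Since $\g$ is parametrized by arc-length we have $\abs\dg\equiv1$, and after the harmless normalization $\g(0)=0$ (the curve has length~$1$) the term $\norm\g_{L^q}+\norm\dg_{L^q}$ is an absolute constant; hence everything comes down to estimating, in both directions, $\seminorm\dg_{\Wd}^q=\iint_{(\R/\Z)^2}\abs{\dg(u)-\dg(v)}^q\,\abs{u-v}^{-1-(\sigma-1)q}\,\d u\,\d v$ against the triple integral. A standing tool is that a $C^1$ arc-length curve is bi-Lipschitz; the paper's quantitative bi-Lipschitz estimate (which underlies the existence result and rests on self-repulsiveness, Remark~\ref{rem:non-rep}) provides a constant $\Lambda=\Lambda(\E(\g))$ with $\Lambda^{-1}\abs{u-v}\le\abs{\g(u)-\g(v)}\le\abs{u-v}$ for all $u,v$. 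This lets us trade every chord length $\abs{\g(u_i)-\g(u_j)}$ for the parameter distance $\abs{u_i-u_j}$ at the cost of a power of $\Lambda$, and reduces the analysis to the \emph{near-diagonal regime} where $u_1,u_2,u_3$ are pairwise close (on the complementary ranges both $\E$-finiteness and the seminorm bound follow from elementary computations using sub-criticality~\eqref{eq:sub-critical}, at the price of negative powers of $\Lambda$).

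For $\g\in\W\Rightarrow\E(\g)<\infty$ I would bound the wedge product from above. From $\g(u_2)-\g(u_1)=\int_{u_1}^{u_2}\dg(\tau)\,\d\tau$, $\g(u_3)-\g(u_2)=\int_{u_2}^{u_3}\dg(\tau)\,\d\tau$ and the identity $\dg(\tau)\wedge\dg(r)=\dg(r)\wedge(\dg(\tau)-\dg(r))$ one gets
\[
 \abs{\br{\g(u_2)-\g(u_1)}\wedge\br{\g(u_3)-\g(u_2)}}\le\int_{u_1}^{u_2}\!\!\int_{u_2}^{u_3}\abs{\dg(\tau)-\dg(r)}\,\d\tau\,\d r .
\]
Dividing by the relevant product of chord lengths and using the bi-Lipschitz bound to turn those into parameter distances, one obtains a pointwise estimate of $1/\RR(\g(u_1),\g(u_2),\g(u_3))$ by a weighted average of $\abs{\dg(\tau)-\dg(r)}^q$; the exponents are arranged precisely so that, after inserting the Morrey bound $\abs{\dg(\tau)-\dg(r)}\le\abs{\tau-r}^{\a}\seminorm\dg_{\Wd}$ and carrying out the remaining integrations with a fractional Hardy/Gagliardo inequality, the triple integral converges and is dominated by a polynomial in $\norm\g_{\W}$.

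For the converse $\E(\g)<\infty\Rightarrow\g\in\W$, which carries the quantitative bound~\eqref{eq:energy-space}, I would start from the elementary identity $\abs{(y-x)\wedge(z-x)}=\abs{y-x}\cdot\dist(z,\ell_{xy})$, where $\ell_{xy}$ is the line through $x$ and $y$. Taking $x=\g(u_1)$, $y=\g(u_3)$, $z=\g(u_2)$ rewrites $1/\RR(\g(u_1),\g(u_2),\g(u_3))$ as $\dist(\g(u_2),\ell)^q$ divided by a product of powers of chord lengths ($\ell$ the line through $\g(u_1),\g(u_3)$), which by the bi-Lipschitz bound becomes a product of powers of parameter distances. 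The crux is a \emph{lower} bound for the transverse deviation $\dist(\g(u_2),\ell)$: relabel so that $u_2$ lies between $u_1$ and $u_3$ and restrict the $u_2$-integration to a fixed sub-interval; a first-order Taylor expansion of $\g$ at $u_1$ then bounds this deviation below by a weighted average of $\abs{\dg(\tau)-\dg(r)}$ over $\tau,r$ near $u_1,u_3$, up to an error which in the range~\eqref{eq:sub-critical} is of strictly lower order and can be absorbed — equivalently, one integrates out $u_2$ first to produce a ``curvature at scale $\abs{u_1-u_3}$'' and feeds it into the dyadic (Besov-type) description of $\Wd$ from~\cite{blatt:imcc}. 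Integrating the two remaining variables yields $\seminorm\dg_{\Wd}^q\le C(p,q)\,\E(\g)+\text{const}$, and tracking the powers of $\Lambda=\Lambda(\E(\g))$ accumulated above (from the chord-to-parameter conversion and from the far ranges) converts the additive constant into $C(p,q)\,\E(\g)^\beta$, which is~\eqref{eq:energy-space}.

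The main obstacle is exactly this lower bound in the direction $\E(\g)<\infty\Rightarrow\g\in\W$: one must exclude, or quantitatively dominate, the ``folded'' configurations where the arc between $\g(u_1)$ and $\g(u_3)$ doubles back, so that $\g(u_2)$ sits near the secant line $\ell$ although $\dg$ oscillates substantially in between. Such folding is, however, expensive for $\E$ — it forces a near self-intersection, hence large energy by Remark~\ref{rem:non-rep} — and it is this mechanism that makes the absorption of the error term possible and that pins down the role of the endpoints of~\eqref{eq:sub-critical}: below $p=\tfrac23q+1$ self-repulsiveness fails, while above $p=q+\tfrac23$ the target order $\sigma-1$ exceeds~$1$. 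Fixing the $u_2$-sub-domain, performing the absorption cleanly, and bookkeeping the $\Lambda(\E(\g))$-dependence so as to extract $\beta$ are where the real work lies; the far ranges, the Morrey embedding and the fractional Hardy inequality are routine.
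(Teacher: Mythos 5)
Your first direction ($\g\in\W\Rightarrow\E(\g)<\infty$) is essentially the paper's argument: bound the wedge product of the two chords by a double integral of $\abs{\dg(\tau)-\dg(r)}$, pass to parameter distances, and reduce to the Gagliardo seminorm of $\dg$ by a change of variables. The converse direction, however, has two genuine gaps. First, your ``standing tool'' --- a bi-Lipschitz constant $\Lambda=\Lambda(\E(\g))$ depending only on the energy --- is circular in this paper's logic: that estimate is Proposition~\ref{prop:bi-Lipschitz}, whose proof begins by applying Theorem~\ref{thm:energy-space} to obtain $\norm{\dg}_{C^{0,\alpha}}\le C(M)$. The hypotheses of Theorem~\ref{thm:energy-space} only give a qualitative bi-Lipschitz constant depending on the individual $C^1$ curve, which cannot produce the quantitative bound~\eqref{eq:energy-space}. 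The paper avoids this entirely: in the hard direction it never uses a global bi-Lipschitz estimate, but localizes to a window $[u_0-\delta,u_0+\delta]$ on which $\abs{\dg(u+v)-\dg(u+w)}\le\tfrac1{10}$, writes the curve there as a graph $(u,f(u))$, and gets the two-sided comparison of chord and parameter distance for free from~\eqref{eq:tilde-bilip}.

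Second, your central analytic step --- a lower bound of $\dist(\g(u_2),\ell)$ by an average of $\abs{\dg(\tau)-\dg(r)}$, with ``folded'' configurations absorbed because they ``force a near self-intersection, hence large energy by Remark~\ref{rem:non-rep}'' --- does not work as stated. Remark~\ref{rem:non-rep} is a counterexample establishing non-repulsiveness for $p<\frac23q+1$; it yields no quantitative lower bound on the energy of oscillating configurations, and any such bound would again be Proposition~\ref{prop:bi-Lipschitz}, i.e.\ circular. Moreover the distance to the secant can vanish while $\dg$ oscillates, so the energy genuinely controls a \emph{second}-order difference, not a first-order one. The paper's mechanism is different: on the $\delta$-window the normalized chords $a=\fracabs{\g(u+v)-\g(u)}$ and $b=\fracabs{\g(u+w)-\g(u)}$ satisfy $\sp{a,b}\ge0$, so $\abs{a\wedge b}^2\ge\tfrac12\abs{a-b}^2$ by~\eqref{eq:a-wedge-b}, and a symmetrization $(v,w)\mapsto(-v,w)$ extracts exactly the centered second difference $\abs{\g(u+v)-2\g(u)+\g(u-v)}$, i.e.\ the seminorm~\eqref{eq:Wsemi'}, equivalent to~\eqref{eq:Wsemi} by Lemma~\ref{lem:equivalenceOfNorms}. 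Finally, your sketch has no device for making the localization scale quantitative: the paper chooses $\delta$ \emph{maximal} subject to~\eqref{eq:delta=max}, proves the Campanato-type estimate~\eqref{eq:Campanato}, and deduces $\delta^{-\alpha}\le C\E(\g)^{1/q}$ --- this is precisely where the exponent $\beta$ in~\eqref{eq:energy-space} comes from, and without this step the additive term $\E(\g)^\beta$ cannot be produced.
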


We will use the last theorem to show

\begin{theorem}[\secB]\label{thm:existence}\hfill\\
 In the sub-critical case~\eqref{eq:sub-critical},
 there is a minimizer of 
 $\E$ among all injective, regular curves 
 $\g \in C^1(\R / \Z, \R^n)$ in any knot class.
\end{theorem}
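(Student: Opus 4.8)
The plan is to run the direct method in the calculus of variations, using Theorem~\ref{thm:energy-space} as the source of compactness. Fix a knot class $\mathcal{K}$ and let $\br{\g_k}_{k\in\N}\subset C^1(\R/\Z,\R^n)$ be a minimizing sequence of injective, regular curves representing $\mathcal{K}$, so that $\E(\g_k)\to\inf_{\mathcal{K}}\E=:m$. First I would show $m<\infty$: by taking any smooth representative of $\mathcal{K}$ one checks that $\E$ is finite on it, which in the sub-critical range follows from the bound in Theorem~\ref{thm:energy-space} (smooth curves lie in $\W$) together with the observation that $\E$ is scale-covariant, so one may also use a rescaled competitor if needed. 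Next, since $\E$ is invariant under reparametrization and under translations of $\R^n$, I may assume each $\g_k$ is parametrized by arc-length (after rescaling so that all curves have length one, using scaling behaviour of $\E$) and that $\g_k(0)=0$. Then $\seqn\g$ is uniformly bounded in $\W=W^{(3p-2)/q-1,q}$ by~\eqref{eq:energy-space}, because $\E(\g_k)$ is bounded.

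The exponent $s:=(3p-2)/q-1$ satisfies $s>1$ in the sub-critical range (this is exactly the content of~\eqref{eq:sub-critical}: $p>\tfrac23 q+1$ gives $3p-2>2q$, i.e.\ $s>1$), so $\W\hookrightarrow C^{1,\alpha}$ compactly for some $\alpha\in(0,1)$ by the Sobolev-Morrey embedding for Sobolev-Slobodecki\u\i\ spaces. Hence, passing to a subsequence, $\g_k\to\g$ in $C^1(\R/\Z,\R^n)$ and $\g_k\rightharpoonup\g$ weakly in $\W$. The $C^1$-limit $\g$ is again parametrized by arc-length, in particular regular, and it inherits the knot class of the $\g_k$ for $k$ large, since $C^1$-close embedded curves are ambient isotopic --- \emph{provided} $\g$ is itself injective. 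Lower semicontinuity of $\E$ along this convergence then yields $\E(\g)\le\liminf_k\E(\g_k)=m$, so $\g$ is the desired minimizer; for the semicontinuity one writes $\E$ as an integral of a nonnegative integrand that is continuous in $(\g(u_1),\g(u_2),\g(u_3),\dg(u_1),\dg(u_2),\dg(u_3))$ wherever the three points are distinct, and applies Fatou's lemma to the pointwise-convergent integrands, using $C^1$-convergence on the (full-measure) set where $u_1,u_2,u_3$ are pairwise distinct.

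The main obstacle is exactly the injectivity of the limit curve $\g$: a priori the minimizing sequence could degenerate, with two far-apart strands of $\g_k$ approaching each other, so that $\g$ develops a self-contact and falls out of the knot class. This is where the self-repulsiveness encoded in~\eqref{eq:self-avoiding} must be used. The standard mechanism is a uniform bi-Lipschitz bound: one shows there is a constant $c>0$, depending only on $p,q$ and the energy bound $\sup_k\E(\g_k)<\infty$, such that every arc-length parametrized curve $\eta$ with $\E(\eta)\le\sup_k\E(\g_k)$ satisfies $\abs{\eta(u)-\eta(v)}\ge c\,d_{\R/\Z}(u,v)$ for all $u,v$. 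Such an estimate is precisely what the excerpt announces as the tool for Theorem~\ref{thm:existence} (``via a uniform bi-Lipschitz bound''). Granting it, the bi-Lipschitz constant passes to the $C^1$-limit $\g$, which is therefore injective, completing the argument. So the proof structure is: (i) $m<\infty$ and reduction to arc-length parametrized, length-normalized, anchored curves; (ii) uniform $\W$-bound from Theorem~\ref{thm:energy-space} and compact embedding into $C^1$; (iii) uniform bi-Lipschitz bound (the crux, using~\eqref{eq:self-avoiding}) to guarantee the limit is an injective curve in $\mathcal{K}$; (iv) lower semicontinuity of $\E$ to conclude. Together with Remark~\ref{rem:non-rep}, this also shows $\E$ is a knot energy in O'Hara's sense in the sub-critical range.
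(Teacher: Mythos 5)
Your overall architecture coincides with the paper's: uniform $\W$-bound from Theorem~\ref{thm:energy-space}, compactness in $C^1$, openness of knot classes in $C^1$ to preserve the isotopy type, Fatou's lemma for lower semicontinuity, and a uniform bi-Lipschitz estimate to keep the limit embedded. The genuine gap is that the one step you yourself call ``the crux'' --- the uniform bi-Lipschitz bound for arc-length curves with $\E\le M$ --- is only asserted (``Granting it\dots''), and it is precisely this estimate that carries essentially all of the analytic content of the existence proof beyond Theorem~\ref{thm:energy-space}. Nothing in the rest of your argument produces it: the $C^{1,\alpha}$-bound coming from the energy controls only the \emph{local} part of the estimate (parameter distances $\abs{u-v}\le\delta(\a,M)$), and says nothing about two strands at large parameter distance approaching each other, which is exactly the degeneration you identify as the danger.

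The paper closes this gap in two steps (Lemma~\ref{lem:last} and Proposition~\ref{prop:bi-Lipschitz}), and you would need something equivalent. First, one introduces a mutual energy $\E(\g_1,\g_2)$ for a pair of arcs and shows by a normalization--compactness argument (Arzel\`a--Ascoli plus Fatou) that it has a strictly positive infimum $c(\a,\mu)$ over all pairs of arc-length arcs at unit distance whose tangents at the closest points are orthogonal to the connecting segment and whose tangents are bounded in $C^{0,\a}$ by $\mu$; positivity holds because the only zero-energy configurations are collinear ones, and these are excluded by the normalization. Second, letting $S$ be the minimal spatial distance realized by parameters with $\abs{s-t}\ge\delta$, one uses the first-order condition $\g'(s)\perp\br{\g(s)-\g(t)}\perp\g'(t)$ at an interior minimum, rescales the two arcs by $1/S$ to land in the normalized class, and exploits the scaling $\E(\g_1,\g_2)\le S^{3p-2q-3}\E(\g)$ to conclude $S\ge\br{c/M}^{1/(3p-2q-3)}$; this is where the strict inequality $p>\tfrac23q+1$ (so that $3p-2q-3>0$) enters, not merely~\eqref{eq:self-avoiding}. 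Without this (or an equivalent) argument your proof is incomplete at its decisive point. The remaining points --- that the statement only makes sense with the length normalization built into $\Cia$ (otherwise scaling drives $\E$ to zero in every knot class), and the finiteness of the infimum via a smooth representative --- are handled correctly or are harmless.
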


To shorten notation we use the abbreviation
\begin{equation}\label{eq:short-notation}
 \T[\bullet]{v}{w}:=\D[\bullet]vw
\end{equation}
throughout this paper.
Furthermore, we sometimes omit the argument of a function if it is precisely the variable $u$, i.e.\@
$\g=\g(u)$ etc.
%This leads to
%\begin{equation}\label{eq:intM-sn}
% \E=\iiint\limits_{\R/\Z\times[-\frac12,\frac12]^{2}}
% \frac{\abs{\T v0\wedge\T w0}^{q}}{\br{\abs{\T vw}\abs{\T v0}\abs{\T w0}}^p}
% {\abs{\dg(u)}\abs{\dg(u+v)}\abs{\dg(u+w)}}
% {\d w\d v\d u}.
%\end{equation}

The first variation of $\mathscr M_{p}$, $p\ge2$,
has been derived by Hermes~\cite[Thm.~2.33, Rem.~2.35]{hermes}.
Here we use a different approach to prove

\begin{theorem}[\secC]\label{thm:first-var}%\hfill\\
 In the sub-critical case~\eqref{eq:sub-critical}
 the functional $\E$ is $C^1$ on the subspace of all regular embedded
 $\W$-curves.

 For any arc-length parameterized embedded $\g\in\W\rzd$
 and $h\in\W\rzd$, the first variation of~$\E$ at~$\g$ in direction~$h$
 amounts to%\vspace{-2ex}
 \begin{equation}\label{eq:dE-intro}
 \begin{split}
   &\delta\E(\g,h) \\
   &= \iiint\limits_{(\mathbb R / \mathbb Z)^3} \Bigg\{
 2q\frac{\abs{\T v0\wedge\T w0}^{q-2}}{\br{\abs{\T vw}\abs{\T v0}\abs{\T w0}}^p}
 \cdot{\sp{\T v0\wedge\T w0,\T v0\wedge\T[h] w0}} \\
% &\qquad\qquad\quad{}-2q\frac{\abs{\T v0\wedge\T w0}^{q}}{\abs{\T vw}^{p}\abs{\T v0}^{q+2}\abs{\T w0}^{q}}\sp{\T v0,\T[h] v0}\nonumber\\
 &\qquad\qquad\qquad{}-3p\frac{\abs{\T v0\wedge\T w0}^{q}}{\br{\abs{\T vw}\abs{\T v0}\abs{\T w0}}^p}\cdot\frac{\sp{\T vw,\T[h] vw}}{\abs{\T vw}^{2}}\\
 &\qquad\qquad\qquad{}+3\frac{\abs{\T v0\wedge\T w0}^{q}}{\br{\abs{\T vw}\abs{\T v0}\abs{\T w0}}^p}\cdot\sp{\dg,\dh}
\Bigg\}\d w\d v\d u.%\label{eq:dE}
 \end{split}
 \end{equation}
\end{theorem}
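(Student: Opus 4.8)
The plan is to compute $\delta\E(\g,h) = \frac{d}{d\tau}\big|_{\tau=0}\E(\g+\tau h)$ by differentiating under the triple integral, and then separately to justify that differentiation and the $C^1$-property. For the formal computation I would write the integrand of $\E$ using the short notation~\eqref{eq:short-notation}, after the substitution $u_1=u$, $u_2=u+v$, $u_3=u+w$, as
\[
 \mathcal I_\tau := \frac{\abs{(\g+\tau h)'(u)}\,\abs{(\g+\tau h)'(u+v)}\,\abs{(\g+\tau h)'(u+w)}\,\abs{\T[\g+\tau h]v0\wedge\T[\g+\tau h]w0}^{q}}{\br{\abs{\T[\g+\tau h]vw}\,\abs{\T[\g+\tau h]v0}\,\abs{\T[\g+\tau h]w0}}^{p}},
\]
which equals $\abs{\dg(u)}\abs{\dg(u+v)}\abs{\dg(u+w)}/\RR$ at $\tau=0$. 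The derivative is then obtained by the product and chain rules: each of the three tangent-norm factors $\abs{(\g+\tau h)'(\cdot)}$ contributes a term $\sp{\dg,\dh}/\abs{\dg}$, which at arc-length parametrization equals $\sp{\dg,\dh}$ and accounts for the last line of~\eqref{eq:dE-intro} (three identical contributions, hence the factor $3$, noting $\abs{\T v0\wedge\T w0}=0$-type degeneracies do not occur since $\g$ is embedded and regular). Differentiating $\abs{\,\cdot\,}^{q}$ of the wedge term gives $q\abs{\cdots}^{q-2}\sp{\cdots,\frac{d}{d\tau}(\cdots)}$, and since $\frac{d}{d\tau}\big|_0\T[\g+\tau h]v0\wedge\T[\g+\tau h]w0 = \T[h]v0\wedge\T w0 + \T v0\wedge\T[h]w0$, the symmetry of the integrand in the pair $(v,w)\leftrightarrow(w,v)$ (which swaps the two terms) lets us replace the sum by twice one term, yielding the factor $2q$ in the first line. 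Finally, differentiating the denominator $\br{\abs{\T vw}\abs{\T v0}\abs{\T w0}}^{-p}$ produces $-p$ times the sum of three terms $\sp{\T\bullet\bullet,\T[h]\bullet\bullet}/\abs{\T\bullet\bullet}^2$; again symmetry of $\E$'s integrand under permuting $u_1,u_2,u_3$ collapses these to $-3p$ times the single term written in the middle line.

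To make this rigorous I would argue as follows. Fix an arc-length parametrized embedded $\g\in\W\rzd$; by Theorem~\ref{thm:energy-space} (and the embeddedness together with the Morrey-type embedding implicit in the sub-critical range) $\g$ enjoys enough Hölder regularity that $\abs{\T vw}$, $\abs{\T v0}$, $\abs{\T w0}$ are bounded below by a positive multiple of the corresponding arc-length distances on the torus, uniformly, and the same bi-Lipschitz estimate persists for $\g+\tau h$ for all $\tau$ in a small interval $(-\tau_0,\tau_0)$ (the bi-Lipschitz constant is open, and $\W\hookrightarrow C^1$ in this range). On that interval the map $\tau\mapsto\mathcal I_\tau$ is differentiable pointwise a.e.\ on $(\R/\Z)^3$, and $\big|\tfrac{d}{d\tau}\mathcal I_\tau\big|$ is dominated by a fixed integrable function: each factor that gets differentiated is either bounded (the tangent norms and their derivatives, via the $C^1$-bound on $\g+\tau h$) or a negative power of an arc-length distance raised to an exponent strictly below the integrability threshold — this is exactly the computation underlying the "only if" direction of Theorem~\ref{thm:energy-space}, applied with a little room to spare because~\eqref{eq:sub-critical} is an open condition. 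Dominated convergence then licenses differentiation under the integral and gives~\eqref{eq:dE-intro}. For the $C^1$-statement I would show that $(\g,h)\mapsto\delta\E(\g,h)$ is continuous on (regular embedded $\W$-curves)$\times\W$: the integrand of~\eqref{eq:dE-intro} depends continuously on $(\g,h)$ in the $\W$-topology pointwise, and the same family of integrable dominating functions (uniform on a neighborhood in $\W$, using once more the openness of the bi-Lipschitz bound and of~\eqref{eq:sub-critical}) yields continuity by dominated convergence; linearity in $h$ is manifest, and boundedness in $h$ follows from the same estimates, so $\delta\E(\g,\cdot)$ is the Gâteaux derivative and, being continuous in $\g$, the Fréchet derivative.

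The main obstacle is the domination step near the diagonal $\{v=0\}\cup\{w=0\}\cup\{v=w\}$: one must check that differentiating the integrand does not push the singularity past the integrability threshold. Differentiating $\abs{\T vw}^{-p}$ costs one extra negative power, i.e.\ replaces $\abs{\T vw}^{-p}$ by something of size $\abs{\T vw}^{-p}\cdot\abs{\dg(u)-\dg(u+w)}/\abs{\T vw}$; here the numerator $\abs{\dh(u)-\dh(u+w)}$ is not merely bounded but — since $h\in\W$ and $\W$ embeds into a Hölder space $C^{1,\alpha}$ with $\alpha=(3p-2)/q-2>0$ in the sub-critical range — is $O(\abs{w}^{1+\alpha})$, which exactly compensates the extra $\abs{w}^{-1}$ and keeps the exponent below criticality. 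The analogous gain works for the wedge term, where one uses that $\T v0\wedge\T w0$ vanishes to second order as the three points become collinear and that $h$'s contribution to the wedge is controlled by the same Hölder seminorm. Making these cancellations explicit, uniformly in $\tau$ and in a $\W$-neighborhood, is the technical heart of the argument; everything else is bookkeeping with the product rule and exploiting the full symmetry of $\RR$ in its three arguments.
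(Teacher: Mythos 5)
Your formal derivation of \eqref{eq:dE-intro} --- product and chain rule followed by collapsing terms via the $\mathfrak S_{3}$-symmetry of $\RR$ --- is exactly the paper's computation, and the overall strategy (differentiate under the integral with a $\tau$-uniform $L^{1}$-majorant, then dominated convergence, then symmetrize back to $(\R/\Z)^{3}$) is also the paper's. The gap sits in the step you yourself call the technical heart: the domination near the diagonal. You propose to control the singular terms by the \emph{pointwise} H\"older bound coming from the embedding of $\W$ into a H\"older space and claim the gain ``exactly compensates'' the lost powers. It does not. First, \eqref{eq:Wembed} only gives $\g'\in C^{0,\alpha'}$ with $\alpha'=3(p-1)/q-2$, not $(3p-2)/q-2$ (the embedding loses $1/q$). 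Second, inserting the pointwise bound
\[
 \Bigl|\tfrac{\g(u+w)-\g(u)}{w}-\tfrac{\g(u+v)-\g(u)}{v}\Bigr|\le C\abs{v-w}^{\alpha'}
\]
into the dominating integrand $\abs{\cdots}^{q}\,\abs w^{-(p-q)}\abs v^{-(p-q)}\abs{v-w}^{-p}$ yields total homogeneity $\alpha'q-p-2(p-q)=-3$ in $(v,w)$, which is not integrable over the two-dimensional set $D$; even with the too-generous exponent $(3p-2)/q-2$ one lands exactly on the log-divergent borderline $-2$. There is no ``room to spare'' from the openness of \eqref{eq:sub-critical}: for a fixed $\g$ known only to lie in $\W$ the scaling is sharp --- already the finiteness of $\E(\g)$ itself cannot be obtained from pointwise H\"older bounds.

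The majorant therefore has to be estimated in integrated form, not pointwise: one keeps the differences of difference quotients inside the integral and bounds the full triple integral by $\norm\g_{\W}^{q}$ and $\norm h_{\W}^{q}$ via the substitution $(v,w)\mapsto\br{\frac v{v-w},\theta(v-w)}$; this is the content of \eqref{eq:estimateintegrand} and \eqref{eq:esttripleintegral}, which reduce to \eqref{eq:intM<seminorm}. (A minor point: your bookkeeping for the term coming from $\abs{\g(u+v)-\g(u+w)}^{-p}$ is off --- that contribution is simply bounded, since $\abs{h(u+v)-h(u+w)}/\abs{\g(u+v)-\g(u+w)}\le C\norm{h'}_{L^{\infty}}$ by the bi-Lipschitz estimate, so no H\"older gain is needed there; the genuinely delicate terms are the wedge term and the bare integrand itself.) A similar caveat applies to the $C^{1}$ claim: when $\g_{k}\to\g$ only in $\W$, a single integrable dominating function is not immediate, and the paper instead argues by contradiction via convergence in measure together with uniform integrability of the majorants.
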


Using this formula, we will see that stationary points of 
the energies $\E [p,2]$ restricted to fixed length satisfy some kind of non-local uniformly elliptic 
pseudo-differential equation. If furthermore  $p \in\br{\tfrac73,\tfrac83}$,
the non-linearity turns out to be sub-critical and  we can finally use the Euler-Lagrange equation
to prove the following main result of this article:

\begin{theorem}[\secD]\label{thm:smooth}\hfill\\
 For $p\in(\tfrac73,\tfrac83)$, let $\g\in\W[3p/2-2,2]\rzd$ be a stationary point of $\E[p,2]$ with respect to fixed length, injective and parametrized by arc-length.
 Then $\g\in C^{\infty}$.
\end{theorem}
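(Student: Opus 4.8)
The plan is to run a bootstrapping argument on the Euler-Lagrange equation. The starting point is the first-variation formula~\eqref{eq:dE-intro} from Theorem~\ref{thm:first-var}, specialized to $q=2$. Since $\g$ is a stationary point subject to the fixed-length constraint, there is a Lagrange multiplier $\lambda\in\R$ such that $\delta\E[p,2](\g,h)=\lambda\int_{\R/\Z}\sp{\dg,\dh}\d u$ for all admissible $h\in\W[3p/2-2,2]$; because $\g$ is parametrized by arc-length one may also drop the length constraint by working with test functions that are tangential-free, or absorb $\lambda$ into the lower-order terms. The first thing I would do is rewrite the right-hand side of~\eqref{eq:dE-intro} by integrating by parts in the $h$-dependence so that $h$ (rather than $\dh$) appears, isolating the leading-order operator. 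The claim — already announced in the paragraph preceding the theorem — is that this produces a decomposition of the form
\begin{equation*}
 \delta\E[p,2](\g,h) = \int_{\R/\Z}\sp{\Q[]\g,h}\d u + \int_{\R/\Z}\sp{F(\g),h}\d u,
\end{equation*}
where $\Q[]$ is a nonlocal, uniformly elliptic pseudo-differential operator of order $3p/2-2$ (its symbol comparable to $|\xi|^{3p/2-2}$, coming from the principal part of the singular triple integral after the arc-length substitution $\g(u+v)-\g(u)\approx v\dg(u)$), and $F(\g)$ collects all genuinely lower-order contributions. The precise statement to prove here is a mapping property: $\Q[]$ is invertible between the relevant Sobolev–Slobodeckiĭ spaces, and the remainder $F$ gains regularity, i.e.\ it maps $\W[s,2]$-curves into a space strictly better than the one forced by $\Q[]$.

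With that decomposition in hand, the bootstrap is standard in structure but delicate in bookkeeping. One writes the Euler-Lagrange equation as $\Q[]\g = -F(\g) + (\text{multiplier term})$ and inverts $\Q[]$: the initial hypothesis $\g\in\W[3p/2-2,2]$ (equivalently, finite energy by Theorem~\ref{thm:energy-space} in the case $q=2$, $p\in(\tfrac73,\tfrac83)$, which indeed lies in the sub-critical range~\eqref{eq:sub-critical}) is fed into $F$; one checks that $F(\g)$ then lies in a slightly higher smoothness class, so that $\Q[]^{-1}F(\g)$ — and hence $\g$ — gains a fixed amount of differentiability. Because $p<\tfrac83$ the nonlinearity is \emph{sub-critical}, which is exactly the condition guaranteeing that each application of the scheme produces a strict gain rather than merely reproducing the same space; iterating, $\g\in\bigcap_s\W[s,2]=C^\infty$. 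Along the way one needs the embedded, arc-length, injective hypotheses to keep the integrand bounded away from its singularities: injectivity together with the bi-Lipschitz bound (as in Theorem~\ref{thm:existence}) ensures $|\g(u_1)-\g(u_2)|\gtrsim\dist_{\R/\Z}(u_1,u_2)$, so that only the diagonal $v,w\to0$ contributes singular behavior and that behavior is the one captured by $\Q[]$.

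The main obstacle is the regularity-gain estimate for the remainder term $F(\g)$, i.e.\ verifying that every summand produced by the integration by parts and the Taylor expansion of $\g$ about $u$ is genuinely of lower order than $\Q[]$ and improves under the iteration. This requires careful multilinear estimates on difference quotients of products of the building blocks $\T v0$, $\T w0$, $\T vw$ and their wedge products in fractional Sobolev norms — product and commutator estimates, plus control of the negative powers $(\abs{\T vw}\abs{\T v0}\abs{\T w0})^{-p}$ and $\abs{\T v0\wedge\T w0}^{q-2}$, the latter being the reason the \emph{non-degenerate} value $q=2$ is singled out (for $q=2$ the exponent $q-2=0$ kills the dangerous factor that would otherwise vanish when the three points become collinear). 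A secondary technical point is showing $\Q[]$ is invertible, not merely elliptic: one compares it to a constant-coefficient fractional Laplacian via a frozen-coefficient / perturbation argument and uses that the difference is again of lower order, together with the fact that $\g$ being a genuine curve rules out a kernel. Once these two estimates are established, the iteration itself is routine.
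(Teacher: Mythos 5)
Your overall strategy is the one the paper follows: split the first variation as $\delta\E[p,2]=12\bigl(Q^{(p)}+\tfrac12R_1+R_2+R_3\bigr)$, identify the leading part $Q^{(p)}$ as a translation-invariant bilinear form diagonalized by Fourier series, show the remainder gains regularity (after a fractional integration by parts moving $(-\Delta)^{\tilde\sigma/2}$ onto the kernel), and bootstrap through the Euler--Lagrange equation with a Lagrange multiplier for the length constraint. Two quantitative assertions in your write-up are wrong, and they sit exactly where the proof must be checked rather than announced. First, the leading operator has order $3p-4$, not $3p/2-2$: the multiplier satisfies $\rho_k=c|k|^{3p-4}+o(|k|^{3p-4})$, consistent with the quadratic form $Q^{(p)}(\g,\g)$ being comparable to the square of the $W^{(3p-4)/2,2}$-seminorm. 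Second, the strict gain per iteration step is \emph{not} a consequence of $p<\tfrac83$ (that inequality only ensures the energy is finite on closed curves); it comes from $p>\tfrac73$. Concretely, if $\g\in W^{(3p-4)/2+\sigma,2}$ then the remainder pairing lies in $\bigl(W^{3/2-\sigma+\eps,2}\bigr)^{*}$, and dividing by the multiplier at high frequencies yields
\begin{equation*}
 \g\in W^{\frac{3p-4}{2}+\sigma+\frac{3p-7}{2}-\eps,2},
\end{equation*}
so the increment $\tfrac{3p-7}{2}-\eps$ is positive for a suitable choice of $\eps$ precisely because $3p-7>0$. With the miscounted order and the misattributed inequality you cannot verify that the iteration actually closes, and that arithmetic is the crux of the argument.

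Two smaller remarks. No frozen-coefficient or perturbation argument is needed to invert the leading part: $Q^{(p)}$ is constant-coefficient by construction (all $\g$-dependence has been pushed into $R_2,R_3$), its symbol is computed exactly, and the kernel at $k=0$ is irrelevant since only high frequencies matter for regularity. And the real work you correctly anticipate---showing the remainder is of lower order---rests on a structural lemma: every term of $R^{(p)}$ carries a factor of the type $|\g'(u+s_1w)-\g'(u+s_1v)|^2\,|v|^{2-p}|w|^{2-p}|v-w|^{-p}$, i.e.\ two extra difference quotients of $\g'$, and it is exactly these that produce the dual-space gain quoted above.
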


In a sense this concludes our study of the non-degenerate, 
subcritical cases of the most prominent knot energies
for curves. Regularity theory for the non-degenerate sub-critical case
has already been performed for O'Hara's energies~\cite{blatt-reiter2}
and for the generalized tangent-point energies~\cite{blatt-reiter3}.
The treatment of the critical case however
turns out to be far more involved and has yet only be done for 
O'Hara's knot energies~\cite{blatt-reiter-schikorra}.

We briefly introduce \emph{Sobolev-Slobodecki{\u\i} spaces} in the form
we will use them in this text.
Let $f\in W^{1,1}\rzd$. For $s\in(0,1)$ and $\rho\in[1,\infty)$ we define the seminorm
\begin{equation}\label{eq:Wsemi}
 \seminorm{f}_{\W[1+s,\rho]} := \br{\int_{{\R/\Z}}\int_{-1/2}^{1/2}
 \frac{\abs{f'(u+w)-f'(u)}^\rho}{\abs w^{1+\rho s}} \d w\d u}^{1/\rho}.
\end{equation}
On $W^{1,\rho}$ this seminorm is equivalent to
\begin{equation}\label{eq:Wsemi'}
 \seminormv{f}_{\W[1+s,\rho]} := \br{\int_{{\R/\Z}}\int_{-1/4}^{1/4}
 \frac{\abs{f(u+w)-2f(u)+f(u-w)}^\rho}{\abs w^{1+\rho(1+s)}} \d w\d u}^{1/\rho},
\end{equation}
see Appendix~\ref{sect:equiv}.

Now let $W^{k,\rho}\rzd$, $k\in\N$, denote the usual Sobolev space
(recall $W^{0,\rho}:=L^\rho$) and
\[ W^{k+s,\rho}\rzd := \sett{f\in W^{k,\rho}\rzd}{\norm f_{\W[k+s,\rho]}<\infty} \]
which we equip, depending on the situation, either with the norm
%$\norm f_{\W[k+s,\rho]} :=$
$\norm f_{\W[k,\rho]} + \seminorm{f^{(k-1)}}_{\W[1+s,\rho]}$
or with $\norm f_{\W[k,\rho]} + \seminormv{f^{(k-1)}}_{\W[1+s,\rho]}$
respectively.

Without further notice we will frequently use the embedding
\begin{equation}\label{eq:Wembed}
 \W[k+s,\rho]\rzd\hookrightarrow C^{k,s-1/\rho}\rzd,
 \qquad \rho\in(1,\infty), \quad s\in(\rho^{-1},1).
\end{equation}
We will denote by $\Cia[]$ resp.\@ $\Wia[]$
\underline injective (embedded) curves parametrized by \underline arc-length.
%and by $\Wir[]$
%\underline injective \underline regular curves.
As usual, a curve is said to be \emph{regular} if there is some $c>0$ such that $\abs\dg\ge c$ a.e.
Constants may change from line to line.

% F\"uhrt die Variante
% \begin{equation*}%\label{eq:intM}
%  \E:=\iiint\limits_{\R/\Z\times[-\frac12,\frac12]^{2}}
%  \frac{\abs{\dg(u)}\abs{\dg(u+v)}\abs{\dg(u+w)}}{\widetilde{\RR}(\g(u),\g(u+v),\g(u+w))}
%  {\d w\d v\d u}, \qquad p,q>0,
% \end{equation*}
% mit
% \begin{equation*}
%  \widetilde{\RR}(x,y,z) := \frac{\abs{y-z}^{p}}{\sin\angle\br{y-x,z-x}^{q}}
%  = \frac{\abs{y-z}^{p}\abs{y-x}^{q}\abs{z-x}^{q}}{\abs{\br{y-x}\wedge\br{z-x}}^{q}},
%  \qquad x,y,z\in\R^{n},
% \end{equation*}
% auch auf eine sinnvolle Theorie? Hier ist $\widetilde{\RR}$ nicht symmetrisch in allen Komponenten, allerdings $\RR(x,y,z)=\RR(x,z,y)$ f\"ur alle $x,y,z\in\R^d$.
% 
% 
% %%%%%%%
% \endinput
% %%%%%%%
% 
%  \begin{align}
%    &\delta\E(\g,h)= \\
%    &{} \lim_{\eps\searrow0} \iiint\limits_{\abs v,\abs w\ge\eps} \Bigg\{
%  q\abs{\T v0\wedge\T w0}^{q-2}\frac{\sp{\T v0\wedge\T w0,\T[h] v0\wedge\T w0+\T v0\wedge\T[h] w0}}{\br{\abs{\T vw}\abs{\T v0}\abs{\T w0}}^p} \nonumber\\
%  &{}-q\frac{\abs{\T v0\wedge\T w0}^{q}}{\br{\abs{\T vw}\abs{\T v0}\abs{\T w0}}^p}\br{\frac{\sp{\T v0,\T[h] v0}}{\abs{\T v0}^{2}}+\frac{\sp{\T w0,\T[h] w0}}{\abs{\T w0}^{2}}} \nonumber\\
%  &{}-p\frac{\abs{\T v0\wedge\T w0}^{q}}{\br{\abs{\T vw}\abs{\T v0}\abs{\T w0}}^p}\cdot{\frac{\sp{\T vw,\T[h] vw}}{\abs{\T vw}^{2}}} \nonumber\\
%  &{}+\frac{\abs{\T v0\wedge\T w0}^{q}}{\br{\abs{\T vw}\abs{\T v0}\abs{\T w0}}^p}\br{\sp{\dg,\dh}+\sp{\dg(u+v),\dh(u+v)}+\sp{\dg(u+w),\dh(u+w)}}
% \Bigg\} \nonumber\\
% &{}\d w\d v\d u.\label{eq:dE}
%  \end{align}

% !TEX root = statpointmenger.tex
\section{\secA}

Before we begin the discussion of the first variation, let us rewrite the integral Menger curvature using
the symmetry and a suitable covering of the domain of integration $(\R/\Z)^3$ by domains, on which it is 
easier to estimate the terms that will appear.
The general idea here is quite similar
to~\cite{blatt:imcc} and Hermes~\cite{hermes}, but we will 
show that it is actually enough to integrate over a certain subdomain of $(\mathbb R / \mathbb Z)^3$.

To this end we define the range of integration
\begin{equation}\label{eq:deco}
 D:=\sets{(v,w) \in (-\tfrac12,0) \times (0,\tfrac12)}{w\leq 1+2v, v \geq -1 + 2w},
\end{equation}
which is depicted in Figure~\ref{fig:domain}.

\begin{figure}[h]
 \centering
 \includegraphics{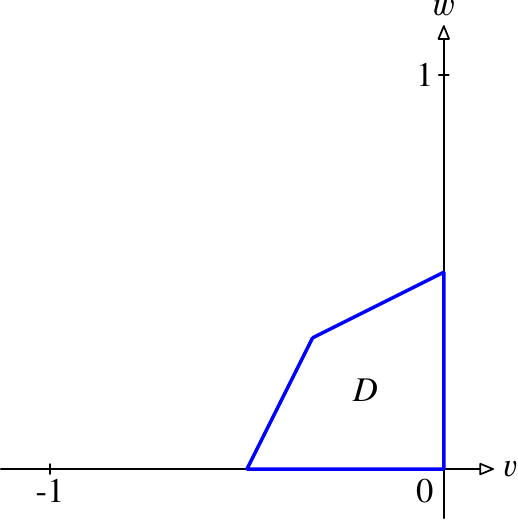}
  \caption{The range of integration $D$}
 \label{fig:domain}
\end{figure}

\begin{lemma}[Domain decomposition]\label{lem:domdec}
 Let $f\in L^{1}\br{\br{\R/\Z}^{3}}$ be symmetric in all components,
 i.e.\@ $f=f\circ\s$ for all permutations $\s\in\mathfrak S_{3}$.
 Then
 \[ \iiint_{(\R/\Z)^{3}} f(u_{1},u_{2},u_{3})\d u_{1}\d u_{2}\d u_{3} = 6\iiint_{\R/\Z\times D} f(u,u+v,u+w)\d w\d v\d u. \]
\end{lemma}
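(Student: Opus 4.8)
The plan is to exploit the full symmetry of the integrand under the permutation group $\mathfrak S_{3}$ together with the translation invariance of the torus, reducing the integral over $(\R/\Z)^{3}$ to an integral over a fundamental domain. First I would perform the change of variables $u_{1}=u$, $u_{2}=u+v$, $u_{3}=u+w$ (which is a measure-preserving bijection of $(\R/\Z)^{3}$ onto itself, with $v,w$ ranging over $\R/\Z$), so that
\[
 \iiint_{(\R/\Z)^{3}} f(u_{1},u_{2},u_{3})\d u_{1}\d u_{2}\d u_{3}
 = \iiint_{(\R/\Z)^{3}} f(u,u+v,u+w)\d w\d v\d u .
\]
Here I would be careful to read $v,w$ as elements of $\R/\Z$ and to represent them by the values in $(-\tfrac12,\tfrac12]$, which is the natural choice for measuring distances along the curve.

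Next I would observe that, for fixed $u$, the inner double integral over $(v,w)\in(\R/\Z)^{2}$ decomposes according to which of the three pairwise distances $|v|$, $|w|$, $|v-w|$ (all measured on $\R/\Z$, i.e.\ with the convention that $v-w$ is also reduced mod $\Z$) is smallest, second smallest, and largest; generically this is a partition into six congruent pieces, and the boundary set where two distances coincide has measure zero. The key point is that permuting $u_{1},u_{2},u_{3}$ corresponds, after the substitution and a compensating shift in $u$, to permuting the roles of $0,v,w$, and since $f$ is $\mathfrak S_{3}$-symmetric each of the six pieces contributes the same amount. Concretely, I would identify the distinguished piece as the one where $v<0<w$ with $v$ the point ``just behind'' $u$ and $w$ the point ``just ahead'', subject to the two additional inequalities $w\le 1+2v$ and $v\ge -1+2w$ that encode the ordering conditions coming from the wrap-around distances; this is exactly the set $D$ in~\eqref{eq:deco}. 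Summing the six equal contributions and using Fubini (legitimate since $f\in L^{1}$) gives the factor $6$ and the claimed identity.

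The routine but slightly delicate part is bookkeeping the six regions and verifying that $D$ as defined really is (up to null sets) a fundamental domain for the combined action of $\mathfrak S_{3}$ and the relevant translations: one must check that the images of $D$ under the six symmetry operations tile $(\R/\Z)^{2}$, i.e.\ they are pairwise disjoint modulo null sets and their union is everything. The cleanest way I would do this is to note that the three cyclic shifts $u\mapsto u$, $u\mapsto u+v$, $u\mapsto u+w$ combined with the order-two swap $v\leftrightarrow w$ generate the six maps, write each explicitly as an affine map of the $(v,w)$-square, and confirm the tiling by a direct (but short) computation of the image inequalities — the conditions $w\le 1+2v$ and $v\ge -1+2w$ are precisely what survives. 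The main obstacle, and the only place real care is needed, is handling the modular reductions correctly: because $v$, $w$, and $v-w$ each live on $\R/\Z$, the ``distance is smallest'' conditions are not simply $|v|\le|w|\le|v-w|$ with $v,w$ in a fixed interval, and one has to track when $v-w$ wraps around; this is exactly the origin of the two extra linear constraints cutting the triangle $(-\tfrac12,0)\times(0,\tfrac12)$ down to $D$. Once the tiling is established, the conclusion is immediate.
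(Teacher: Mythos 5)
Your argument is correct and essentially the paper's: after the substitution $u_1=u$, $u_2=u+v$, $u_3=u+w$, the set $D$ is a fundamental domain for the $\mathfrak S_3$-action, with the constraints $w\le 1+2v$, $v\ge -1+2w$ expressing that the (possibly wrapped) distance between $u+v$ and $u+w$ dominates $\abs v$ and $\abs w$ --- note this is the condition of \emph{which distance is largest} together with the sign convention $v<0<w$, not a full ordering of all three distances as your opening framing suggests, though the inequalities you ultimately write down are the right ones. The paper also spares you the disjointness bookkeeping you call delicate: it verifies only that the six images cover $(\R/\Z)^3$ and then observes $\#\mathfrak S_3\cdot\abs{\R/\Z\times D}=6\cdot\tfrac16=\abs{(\R/\Z)^3}$, so the covering is automatically a partition up to null sets.
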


\begin{proof}
 Let $P_{\s}\in\R^{3\times 3}$ denote the permutation matrix corresponding to $\s\in\mathfrak S_{3}$. 
 We first show that the images
 $\sets{P_{\s}\br{\R/\Z\times D}}{\s\in\mathfrak S_{3}}$
 cover $(\R/\Z)^{3}$.

 Consider $(u_{1},u_{2},u_{3})\in\br{\R/\Z}^{3}$. Then after a suitable permutation we can assume that
 \begin{equation*}
  d_{\mathbb R / \mathbb Z}(u_1,u_3) = \max\br{d_{\mathbb R / \mathbb Z}(u_1,u_2),d_{\mathbb R / \mathbb Z}(u_2,u_3),
  d_{\mathbb R / \mathbb Z}(u_1, u_3) }
 \end{equation*}
 where $d_{\mathbb R / \mathbb Z}(x,y) = \min\{|x-y-k|: k \in \mathbb Z\}
 \in[0,\frac12]$ denotes the
 distance in $\mathbb R / \mathbb Z$.
 Hence, interchanging $u_{1}$ and $u_{3}$ if necessary,
 there are $(v,w) \in \bv{-\frac12,0}\times\bv{0,\frac12}$ with
 $$
   u_1 = u_2 + v,\qquad u_3 = u_2 + w,
 $$
 and 
 \begin{align*} \max\br{-v,w} &=\max\br{d_{\R/\Z}(u_{1},u_{2}),d_{\R/\Z}(u_{2},u_{3})}
    \\ &\le d_{\R/\Z}(u_{1},u_{3}) = \min\br{w-v,1-(w-v)}\le1-w+v, \end{align*}
%  d_{\mathbb R / \mathbb Z}(u_1,u_3) = 1- (w-v) \leq \max\{-v,w\} = \max\{d_{\mathbb R / \mathbb Z}(u_1,u_2),d_{\mathbb R / \mathbb Z}(u_2,u_3),
%  d_{\mathbb R / \mathbb Z}(u_1, u_3) \},
 %But this implies that
 so $(v,w) \in\overline D$.
 Since furthermore 
 \begin{equation*}
\#\mathfrak S_3 \cdot \abs{\R/\Z\times D}=6 \cdot \tfrac16 = \abs{(\mathbb R / \mathbb Z)^3}
 \end{equation*}
 the sets $\sets{P_{\s}\br{\R/\Z\times D}}{\s\in\mathfrak S_{3}}$
 form up to sets of measure zero a disjoint partition of $(\R/\Z)^{3}$.
\end{proof}

Following Lemma~\ref{lem:domdec} we derive using~\eqref{eq:short-notation}
\begin{equation}\label{eq:intM-sn''}
 \E(\g)=6\iiint_{\R/\Z\times D}
 \frac{\abs{\T v0\wedge\T 0w}^{q}}{\br{\abs{\T v0}\abs{\T 0w}\abs{\T vw}}^p}
 {\abs{\dg(u)}\abs{\dg(u+v)}\abs{\dg(u+w)}}
 {\d w\d v\d u}.
\end{equation}

\begin{proof}[Theorem~\ref{thm:energy-space}]
Recall that any embedded $\Wia[1+s,\rho]$-curve, $s>\tfrac1\rho$, is bi-Lipschitz continuous~\cite[Lemma~2.1]{blatt:bre}.
We obtain, using $\abs{a\wedge b}=\abs{a\wedge(a\pm b)}\le\abs a\abs{a\pm b}$ for $a,b\in\R^{3}$,
\begin{align*}
 \E(\g) &\le C \iiint_{\R/\Z\times D}\frac{\abs{\int_0^1\dg(u+\theta_{1} v)\d\theta_{1}\wedge
 \int_0^1\dg(u+\theta_{2} w)\d\theta_{2}}^{q}}{\abs v^{p-q}\abs w^{p-q}\abs{v-w}^{p}} \d w\d v\d u\\
 &\le C \tint\frac{\int_0^1\abs{\dg(u+\theta v)-\dg(u+\theta w)}^{q}\d\theta}{\abs v^{p-q}\abs w^{p-q}\abs{v-w}^{p}} \d w\d v\d u\\
 &\le C \int_0^1\tint\frac{\abs{\dg(u+\theta (v-w))-\dg(u)}^{q}}{\abs v^{p-q}\abs w^{p-q}\abs{v-w}^{p}} \d w\d v\d u\d\theta
\end{align*}
where $C$ depends on $p,q$ and $\g$.
Substituting
$\Phi:(v,w)\mapsto (t,\tilde w) := \br{\frac v{v-w},\theta(v-w)}$,
$\abs{\det D\Phi(v,w)} = \frac\theta{\abs{v-w}}$,
$\Phi(D)\subset[0,1]\times[-1,0]$,
we arrive at
\begin{equation}\label{eq:intM<seminorm}
\begin{split}
 &\E(\g) \\
 &\le C \int_{0}^{1}\theta^{-1}\int_{\R/\Z}\int_{0}^{1}\int_{-1}^{0}
 \frac{\abs{\dg(u+\tilde w)-\dg(u)}^{q}}{\abs {t\frac{\tilde w}\theta}^{p-q}\abs{(t-1)\frac{\tilde w}\theta}^{p-q}\abs{\frac{\tilde w}\theta}^{p-1}} \d\tilde w\d t\d u\d\theta \\
 &\le C \int_{0}^{1}{\theta^{3p-2q-2}}\d\theta
 \int_{0}^{1}\frac{\d t}{\abs{t(1-t)}^{p-q}}
 \int_{\R/\Z}\int_{-1}^{0}
 \frac{\abs{\dg(u+\tilde w)-\dg(u)}^{q}}{\abs {\tilde w}^{3p-2q-1}} \d\tilde w\d u \\
 &\le C\br{\seminorm{\g}_{\W}^{q}+\lnorm[\infty]\dg^{q}}
 \le C\norm{\g}_{\W}^{q}.
\end{split}
\end{equation}
For the other implication, we first derive for given vectors
$a,b\in\R^{n}$, $\abs a=\abs b=1$, $\sp{a,b}\ge0$,
\begin{equation}\label{eq:a-wedge-b}
 \abs{a\wedge b}^{2} = \abs a^{2}\abs b^{2}-\sp{a,b}^{2}
   \ge1-\sp{a,b}=\tfrac12\abs{a-b}^{2}.
\end{equation}
By uniform continuity of $\dg$
we may choose $\delta=\delta(\g)\in(0,\tfrac12)$ such that
\begin{equation}
 \abs{\dg(u+v)-\dg(u+w)} \le \tfrac1{10}
 \qquad \text{for all } u\in\R/\Z, \quad v,w\in[-\delta,\delta].
\end{equation}
In fact, we may choose $\delta$ to be maximal, i.e.\@ we
assume that there are $\tilde u\in\R/\Z$, $\tilde v,\tilde w\in[-\delta,\delta]$
with
\begin{equation}\label{eq:delta=max}
 \abs{\dg(\tilde u+\tilde v)-\dg(\tilde u+\tilde w)} = \tfrac1{10}.
\end{equation}
We fix $u_{0}\in\R/\Z$. As $\g\in C^{1}\rzd$ we may apply a suitable
translation and rotation of the ambient space~$\R^{n}$ such that
$\g(u_{0})=0$ and there is a function $f\in C^{1}(\R,\R^{n-1})$
with $\lnorm[\infty]f\le1$ and $f(0)=0$ such that
$\tilde\g(u):=(u,f(u))$ satisfies $\tilde\g(B_{2\delta}(0))\subset \g(\R/\Z)$.
Then
\begin{equation}\label{eq:tilde-bilip}
 \tfrac12\abs{\T[\tilde\g]v0}\le\abs{v}\le\abs{\T[\tilde\g]v0}.
\end{equation}

Arc-length parametrization of $\g$ gives
\renewcommand{\T}[3][\tg]{\triangle_{#2,#3}{#1}}
\begin{equation}\label{eq:tilde-g}
\begin{split}
 &\E(\tilde\g) \\
 &\ge c\int_{u_0-\delta}^{u_0+\delta}\int_{-\delta}^\delta\int_{-\delta}^\delta
 \frac{\abs{\fracabs{\T v0}\wedge\fracabs{\T 0w}}^{q}}{{\abs{\T v0}^{p-q}\abs{\T 0w}^{p-q}\abs{\T vw}^p}}
 {\d w\d v\d u} \\
 &\refeq[\ge]{a-wedge-b} c\int_{u_0-\delta}^{u_0+\delta}\int_{-\delta}^\delta\int_{-\delta}^\delta
 \frac{\abs{\sign v\fracabs{\T v0}+\sign w\fracabs{\T 0w}}^{q}}{{\abs{\T v0}^{p-q}\abs{\T 0w}^{p-q}\abs{\T vw}^p}}
 {\d w\d v\d u} \\
 &\refeq[\ge]{tilde-bilip} c\int_{u_0-\delta}^{u_0+\delta}\int_{-\delta}^\delta\int_{-\delta}^\delta
 \frac{\abs{\sign v\fracabs{\T v0}+\sign w\fracabs{\T 0w}}^{q}}{{\abs{v}^{p-q}\abs{w}^{p-q}\abs{v-w}^p}}
 {\d w\d v\d u} \\
 &\ge c\int_{u_0-\delta}^{u_0+\delta}\int_{-\delta}^\delta\int_{-\delta}^\delta
 \br{\frac{\abs{\sign v\fracabs{\T v0}+\sign w\fracabs{\T 0w}}^{q}}{{\abs{v}^{p-q}\abs{w}^{p-q}\abs{v-w}^p}}
 + \frac{\abs{-\sign v\fracabs{\T {-v}0}+\sign w\fracabs{\T 0w}}^{q}}{{\abs{v}^{p-q}\abs{w}^{p-q}\abs{v+w}^p}}}
 {\d w\d v\d u} \\
 &\ge c\int_{u_0-\delta}^{u_0+\delta}\int_{-\delta}^{\delta}\int_{-\abs v}^{\abs v}
 \br{\frac{\abs{\sign v\fracabs{\T v0}+\sign w\fracabs{\T 0w}}^{q}}{{\abs{v}^{p-q}\abs{w}^{p-q}\abs{v+w}^p}}
 +\frac{\abs{-\sign v\fracabs{\T {-v}0}+\sign w\fracabs{\T 0w}}^{q}}{{\abs{v}^{p-q}\abs{w}^{p-q}\abs{v-w}^p}}}
 {\d w\d v\d u} \\
 &\ge c\int_{u_0-\delta}^{u_0+\delta}\int_{-\delta}^{\delta}\int_{-\abs v}^{\abs v}
 \frac{\abs{\sign v\fracabs{\T v0}+\sign w\fracabs{\T 0w}}^{q}
 +\abs{-\sign v\fracabs{\T {-v}0}+\sign w\fracabs{\T 0w}}^{q}}{{\abs{v}^{3p-2q}}}
 {\d w\d v\d u} \\
 &\ge c\int_{u_0-\delta}^{u_0+\delta}\int_{-\delta}^{\delta}
 \frac{\abs{\fracabs{\T v0}+{\fracabs{\T {-v}0}}}^{q}}{{\abs{v}^{3p-2q-1}}}
 {\d v\d u}
% &\ge c\int_{u_0-\delta}^{u_0+\delta}\int_{-\delta}^{\delta}
% \frac{\abs{{\T v0}-{{\T {-v}0}}}^{q}}{{\abs{v}^{3p-q-1}}}
% {\d v\d u}
% -C\int_{u_0-\delta}^{u_0+\delta}\int_{-\delta}^{\delta}
% \frac{\abs{\frac1{\abs{\T v0}}-\frac1{\abs{\T {-v}0}}}^{q}}{{\abs{v}^{3p-2q-1}}}
% {\d v\d u} \\
% 
% 
% &\ge c\int_{u_0-\delta}^{u_0+\delta}\int_{-delta}^0\int_0^\delta
% \frac{\abs{\fracabs{\T v0}+\fracabs{\T 0w}}^{q}+\abs{\fracabs{\T v0}+\fracabs{\T 0w}}^{q}}{{\abs{v}^{p-q}\abs{w}^{p-q}\abs{v+w}^p}}
% {\d w\d v\d u} \\
% &\ge c\int_{u_0-\delta}^{u_0+\delta}\int_{0}^{\delta}\int_{0}^{v/2}
% \frac{\abs{\frac{\T vw}{\abs{\T v0}}+\T 0w\br{\frac1{\abs{\T v0}}-\frac1{\abs{\T 0w}}}}^{q}}{{{v}^{p-q}{w}^{p-q}\br{v+w}^p}}
% {\d w\d v\d u} \\
\end{split}
\end{equation}
where $c>0$ only depends on $p$ and $q$.
The last line in~\eqref{eq:tilde-g} is bounded below by
\begin{align*}
 &c\int_{u_0-\delta}^{u_0+\delta}\int_{-\delta}^{\delta}
 \frac{\abs{{\T v0}+{{\T {-v}0}}}^{q}}{{\abs{v}^{3p-q-1}}}
 {\d v\d u}
 -C\int_{u_0-\delta}^{u_0+\delta}\int_{-\delta}^{\delta}
 \frac{\abs{{\T{-v}0}}^{q}\abs{\frac1{\abs{\T v0}}-\frac1{\abs{\T {-v}0}}}^{q}}{{\abs{v}^{3p-2q-1}}}
 {\d v\d u} \\
 &\refeq[\ge]{tilde-bilip} c{
 \int_{u_{0}-\delta}^{u_{0}+\delta}\int_{-\delta}^{\delta}
 \frac{\abs{\tg(u+v)-2\tg(u)+\tg(u-v)}^{q}}{\abs v^{3p-q-1}}\d v\d u
 }
 -C\int_{u_0-\delta}^{u_0+\delta}\int_{-\delta}^{\delta}
 \frac{\abs{\frac v{\abs{\T v0}}-\frac{v}{\abs{\T {-v}0}}}^{q}}{{\abs{v}^{3p-2q-1}}}
 {\d v\d u}.
\end{align*}
By
\[ \abs{\frac v{\abs{\T v0}}-\frac{v}{\abs{\T {-v}0}}}
   \le \abs{\frac{(v,\T[f]v0)}{\abs{\T v0}}+\frac{(-v,\T[f]{-v}0)}{\abs{\T {-v}0}}}
   = \abs{\fracabs{\T v0}+\fracabs{\T {-v}0}} \]
we may use~\eqref{eq:tilde-g} to absorb the last term
which finally leads to
\[ \E(\g)\ge c{
 \int_{u_{0}-\delta}^{u_{0}+\delta}\int_{-\delta}^{\delta}
 \frac{\abs{\tg(u+v)-2\tg(u)+\tg(u-v)}^{q}}{\abs v^{3p-q-1}}\d v\d u
 }. \]
Since reparametrization to arc-length preserves regularity, we arrive at
\begin{equation}\label{eq:delta-est}
 \E(\g)\ge c{
 \int_{u_{0}-\delta}^{u_{0}+\delta}\int_{-\delta}^{\delta}
 \frac{\abs{\g(u+v)-2\g(u)+\g(u-v)}^{q}}{\abs v^{3p-q-1}}\d v\d u
 }.
\end{equation}
As $u_{0}$ was chosen arbitrarily, we obtain
\begin{equation}\label{eq:semi-intM}
 {\seminormv{\g}}_{\W[(3p-2)/q-1,q]}^{q}
 \le C\br{\E(\g)+\lnorm[\infty]{\dg}^{q}\delta^{-3p+2q+2}}
\end{equation}
uniformly on $\R/\Z$.
Since the exponent $-3p+2q+2$ is negative,
we have to show that $\delta$ is uniformly
bounded away from zero in order to finish the proof.
To this end we will establish the Morrey-type estimate
 \begin{equation}\label{eq:Campanato}
  \lnorm[\infty]{\gamma'(\cdot+w) - \gamma'(\cdot)} \leq C \E(\g)^{1/q} |w|^{\alpha}
  \qquad\text{for all }w\in[-\tfrac12,\tfrac12]
 \end{equation}
 where $\alpha = 3(p-1)/q-2>0$.
 As $\delta$ was chosen to be maximal with respect to~\eqref{eq:delta=max},
 we arrive at
 \begin{equation*}
  \tfrac1{10} \le C \E(\g)^{1/q} \delta^{\alpha}
 \end{equation*}
 which, applied to~\eqref{eq:semi-intM}, gives~\eqref{eq:energy-space}.

It remains to prove~\eqref{eq:Campanato} which follows
by standard arguments due to Campanato~\cite{campanato}.
Let $\g'_{B_r (x)}$ denote the integral mean 
 of $\g'$ over $B_r(x)$. We calculate for $x \in \mathbb R / \mathbb Z$
 and $r \in (0,\delta)$ 
 \begin{align*}
  \frac 1 {2r}\int_{B_r(x)} |\gamma'(v)- \g'_{B_r(x)}| \d v
  &\leq \frac 1 {4r^2} \int_{B_r(x)} \int_{B_r(x)}| \gamma'(v) - \gamma'(u)| \d u \d v
  \\
  &\leq  \left(\frac 1 {4r^2} \int_{B_r(x)} \int_{B_r(x)}| \gamma'(v) - \gamma'(u)|^q \d u \d v\right)^{1/q}
  \\
  &\leq C r^{\alpha} \left(\int_{B_r(x)} \int_{B_r(x)} \frac{| \gamma'(v) - \gamma'(u)|^q} {|u-v|^{3p-2q-1}}\d u \d v\right)^{1/q} 
  \\
  &\refeq[\le]{delta-est} C r^{\alpha} \E(\g)^{1/q}.
 \end{align*}
 As~\eqref{eq:Campanato} only involves the domain of $\g$
 up to a measure zero set, we may restrict to Lebesgue points.
 We choose two Lebesgue points $u,v \in \mathbb R / \Z$ of $\gamma'$
 with $r:=|u-v| \in (0,\tfrac\delta2)$. Then
 \begin{equation*}
  |\dg(u) - \dg(v)| 
  \leq \sum_{k=0}^\infty \left| \dg_{B_{2^{1-k}r}(u)} - \dg_{B_{2^{-k}r}(u)} \right|
  + \left| \dg_{B_{2r}(u)} - \dg_{B_{2r}(v)} \right|
  + \sum_{k=0}^\infty \left| \dg_{B_{2^{1-k}r}(v)} - \dg_{B_{2^{-k}r}(v)} \right|.
  \end{equation*}
  Since
  \begin{align*}
   \left| \dg_{B_{2r}(u)} - \dg_{B_{2r}(v)} \right| 
   &\leq \frac {\int_{B_{2r}(u)} |\dg(x) - \dg_{B_{2r}(u)}| \d x +
    \int_{B_{2r}(v)} |\dg(x) - \dg_{B_{2r}(v)}| \d x} {|B_{2r} (u) \cap B_{2r}(v)|}
    \\
    &\leq C |u-v|^\alpha \E(\g)^{1/q} 
  \end{align*}
  as $r=|u-v|$ and, for all $y \in \mathbb R / \mathbb Z$, $R\in(0,\tfrac\delta2)$,
  \begin{align*}
   \left| \dg_{B_{2R}(y)} - \dg_{B_{R}(y)} \right| 
   &\leq \frac {\int_{B_{R}(y)} |\dg(x) - \dg_{B_{2R}(y)}| \d x +
    \int_{B_{R}(y)} |\dg(x) - \dg_{B_{R}(y)}| \d x} {2R}
    \\
    &\leq C R^\alpha \E(\g)^{1/q},
  \end{align*}
  we deduce
%  \begin{align*}
   $|\dg(u) - \dg(v)| 
   \leq C \left( \sum_{k=0} ^\infty 2^{-k\alpha} + 1 +  \sum_{k=0} ^\infty 2^{-k\alpha}   \right)
   |u-v|^\alpha \E(\g)^{1/q}$.
%  \end{align*}
  Thus
%  \begin{equation*}
   $|\dg(u) - \dg(v)| \leq C |u-v|^\alpha \E(\g)^{1/q}$
%  \end{equation*}
 for all Lebesgue points of $\gamma'$ with $|u-v| < \frac\delta2$.
% Since Lebesgue points are dense and using the triangle inequality this 
% proves \eqref{eq:Campanato}.
 The case $|u-v| \geq \frac\delta2$ follows by the triangle inequality.
\end{proof}

Let us conclude this section by briefly commenting on the other
ranges in the $(p,q)$-domain, see Figure~\ref{fig:range}.

\begin{remark}[Non-repulsive energies for $p<\frac23q+1$]\label{rem:non-rep}
 A bi-Lipschitz estimate is not guaranteed for injective curves if $p<\frac23q+1$. We briefly give the following example.
 Consider the curves $u\mapsto (u,0,0)$ and $u\mapsto (0,u,\delta)$ for $u\in[-1,1]$, $\delta\in[0,1]$.
 The interaction of these strands leads to the $\E$-value
 \begin{align*}
  &C\iiint_{[-1,1]^3} \frac{\br{\delta^{2}+u^{2}}^{q/2}}{\abs{v-w}^{p-q}{{\br{\delta^{2}+u^2+v^{2}}^{p/2}\br{\delta^{2}+u^2+w^{2}}^{p/2}}}}\d w\d v\d u \\
  &\le C\iiint_{[-1,1]^3} \frac{\br{\delta^{2}+u^{2}}^{(q-p)/2}}{\abs{v-w}^{p-q}{{\br{\delta^{2}+u^2+v^{2}+w^{2}}^{p/2}}}}\d w\d v\d u.
 \end{align*}
 Introducing polar coordinates $u=r\cos\th$, $v=r\sin\th\cos\varphi$,
 $w=r\sin\th\sin\varphi$, the former quantity is bounded by
 \begin{align*}
  & C\int_0^{\sqrt3}\int_0^{\pi}
  \frac{\br{\delta^{2}+r^{2}\cos^{2}\th}^{(q-p)/2}r^2\sin\th}
  {r^{p-q}\sin^{p-q}\th\br{\delta^{2}+r^{2}}^{p/2}}
  \d\th\d r
  \underbrace{\int_{0}^{2\pi}\frac{\d\varphi}{\abs{\cos\varphi-\sin\varphi}^{p-q}}}_{{\le C}} \\
  & \le C\int\limits_0^{\sqrt3}\br{\int\limits_{[0,\frac\pi4]\cup[\frac{3\pi}4,\pi]}
  \frac{\br{\delta^{2}+r^{2}}^{(q-p)/2}\d\th}
  {r^{p-q-2}\underbrace{\sin^{p-q-1}\th}_{\ge1}\br{\delta^{2}+r^{2}}^{p/2}}
  +\int\limits_{\frac\pi4}^{\frac{3\pi}4}
  \frac{\br{\delta^{2}+r^{2}\cos^{2}\th}^{(q-p)/2}r\sin\th}
  {r^{p-q-1}\sin^{p-q}\th\br{\delta^{2}+r^{2}}^{p/2}}
  \d\th
  }\d r \\
  & \le C\int_0^{\sqrt3}\br{
  \br{\delta+r}^{-3p+2q+2}
  +r^{-p+q+1}\br{\delta+r}^{-p}
  \int_{0}^{r}\br{\delta^{2}+\sigma^{2}}^{(q-p)/2}\d\sigma
   }\d r \\
  &\le C\br{1-\delta^{-3p+2q+3}}\le C.
% \label{eq:non-rep}
 \end{align*}
 Using Theorem~\ref{thm:energy-space}
 and the monotonicity of $\E[\cdot,q]$ for fixed $q$, it is easy to
 produce a family of knots uniformly converging to a non-embedded curve
 without an energy blow-up as $\delta\searrow0$,
 so these energies are not self-repulsive.
\end{remark}

\begin{remark}[Singular energies for $p\ge q+\tfrac23$, $q>1$]\label{rem:singular}
 For $p\ge q+\tfrac23$, $q>1$, and an absolutely continuous $\g:\R/\Z\to\R$ we have
 $\E(\g)\equiv\infty$ for all $C^{1}$-curves~$\g$.
 To see this, note that we assumed $p<\frac23q+1$ in Theorem~\ref{thm:energy-space} mainly
because neither~\eqref{eq:Wsemi} nor~\eqref{eq:Wsemi'} is not defined for $s\ge1$.
For general $p\ge \frac23q+1$ we nevertheless still have
\begin{equation*}
 \int_{\R/\Z}\int_{-1/2}^{1/2} \frac{\abs{\dg(u+w)-\dg(u)}^q}{\abs{w}^{3p-2q-1}}\d w\d u
 \le C\br{\E(\g)+\E(\g)^{\beta}}.
\end{equation*}
 Applying Brezis~\cite[Prop.~2]{brezis}, the function $\dg$ is constant,
 hence $\g$ lies on a straight line. Therefore, $\g$ cannot be a closed
 $C^{1}$-curve.
\end{remark}

\begin{remark}[Strange energies for $p\in[q+\tfrac23,\tfrac23q+1)$]\label{rem:strange}
 On $p\in[q+\tfrac23,\tfrac23q+1)$, $p,q>0$, see the hatched area in Figure~\ref{fig:range},
 we find the strange behavior that there are no
 closed finite-energy $C^{3}$-curves
 while self-intersections, and in particular corners, are not penalized.
 So piecewise linear curves (polygonals) have finite energy.
 
 The latter can be seen by adapting the calculation
 in Remark~\ref{rem:non-rep}.
 For the former we recall that a closed
 arc-length parametrized $C^2$-curve must have
 positive curvature $\abs{\g''}$ at some point $u_{0}$ and by continuity
 there are $c,\delta>0$ with
 $\abs{\g''}\ge c>0$ on $[u_{0}-\delta,u_{0}+\delta]$.
 As $\g''\perp\dg$ we obtain $\abs{\g''\wedge\dg}=\abs{\g''}\ge c$.
 So $\E(\g)$ is bounded below by
 \begin{align*}
  &\int\limits_{u_{0}-\delta}^{u_{0}+\delta}
  \int\limits_{-\delta}^{\delta}
  \int\limits_{\frac13\abs v}^{\frac23\abs v}
  \abs v^{-3p+2q}\abs{\frac{\T v0}v\wedge\frac{\T 0w}w}^{q} \d w\d v\d u \\
  &=\int\limits_{u_{0}-\delta}^{u_{0}+\delta}
  \int\limits_{-\delta}^{\delta}
  \int\limits_{\frac13\abs v}^{\frac23\abs v}
  \abs v^{-3p+2q}\Bigg|\tfrac{v-w}2\g''(u)\wedge\dg(u)
  +\tfrac{v^{2}}2\int_{0}^{1}(1-\vth_{1})^{2}\g'''(u+\vth_{1}v)\d\vth_{1}\wedge\br{\dg(u)+\tfrac w2\g''(u)} \\
  &\qquad{}-\tfrac{w^{2}}2\int_{0}^{1}(1-\vth_{2})^{2}\g'''(u+\vth_{2}w)\d\vth_{1}\wedge\br{\dg(u)+\tfrac v2\g''(u)} \\
  &\qquad{}+\tfrac{v^{2}w^{2}}2\iint_{[0,1]^{2}}(1-\vth_{1})^{2}(1-\vth_{2})^{2}\g'''(u+\vth_{1}v)\wedge\g'''(u+\vth_{2}w)\d\vth_{1}\d\vth_{2}
  \Bigg|^{q} \d w\d v\d u \\
  &\ge \delta
  \bv{\tilde c-C\delta \lnorm[\infty]{\g'''}^{q}\br{\lnorm[\infty]{\g'''}+\lnorm[\infty]{\g''}+1}^{q}}
  \int_{-\delta}^{\delta}
  \abs v^{-3p+3q+1}\d v.
%  -C\delta^{2}\int\limits_{-\delta}^{\delta}\abs v^{3q-3p+2}\d v
 \end{align*}
 Lessening $\delta>0$, the square bracket is positive.
 This gives $\E(\g)=\infty$.
\end{remark}

\section{\secB}

The arguments here are quite similar as
for the tangent-point energies~\cite{blatt-reiter3},
however, we provide full proofs for the readers' convenience.

Using Theorem~\ref{thm:energy-space} together with
the Arzel\`a-Ascoli theorem, we see that sets of 
curves in $\Cia(\mathbb R / \mathbb Z, \mathbb R^n)$
with a uniform bound on the energy are sequentially compact in $C^1$.
%The next proposition will help us to show that the limits we get
%are injective curves:
To this end we need the following result.

\begin{proposition}[Uniform bi-Lipschitz estimate]\label{prop:bi-Lipschitz}
 For every $M < \infty$ and~\eqref{eq:sub-critical} there is a constant $C(M,p,q)>0$ such 
 that every curve $\gamma\in \Cia(\mathbb R / \mathbb Z, \mathbb R^n)$
 parametrized by arc-length with
 \begin{equation}\label{eq:TP<M}
  \E(\gamma) \leq M
 \end{equation}
 satisfies the bi-Lipschitz estimate
 \begin{equation}\label{eq:bil}
  \abs{u-v} \leq C(M,p,q) \abs{\gamma(u) - \gamma(v)}
  \qquad \text{for all } u,v \in \mathbb R / \mathbb Z.
 \end{equation}
\end{proposition}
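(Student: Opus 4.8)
\emph{The plan is to} split \eqref{eq:bil} into a small-scale estimate, which is a soft consequence of the $C^{1,\alpha}$-regularity encoded in Theorem~\ref{thm:energy-space}, and a large-scale estimate, which rules out \emph{near-collisions} of distant sub-arcs: we will show that a near-collision at spatial distance $\rho\to0$ drives the energy to $+\infty$, and this blow-up is governed precisely by the self-avoidance condition~\eqref{eq:self-avoiding}, which holds strictly throughout~\eqref{eq:sub-critical}.

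\textbf{Step 1 (uniform $C^{1,\alpha}$ bound and local bi-Lipschitz estimate).} By Theorem~\ref{thm:energy-space}, \eqref{eq:TP<M} gives $\norm{\gamma}_{\W}\le C(M)$, and the Morrey--Campanato estimate~\eqref{eq:Campanato} proved there yields $\seminorm{\dg}_{C^{0,\alpha}}\le C(M)$ with $\alpha=3(p-1)/q-2>0$. Since $\abs{\dg}\equiv1$, expanding $\gamma(v)-\gamma(u)=\dg(u)(v-u)+\int_u^v(\dg(t)-\dg(u))\d t$ gives $\abs{\gamma(u)-\gamma(v)}\ge\abs{u-v}-C(M)\abs{u-v}^{1+\alpha}$, so there is $\theta=\theta(M)\in(0,\tfrac12)$ with $\abs{\gamma(u)-\gamma(v)}\ge\tfrac12 d_{\R/\Z}(u,v)$ whenever $d_{\R/\Z}(u,v)\le\theta$. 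Hence~\eqref{eq:bil} can only fail through pairs with $d_{\R/\Z}(u,v)\ge\theta$, and it suffices to produce $\eta=\eta(M,p,q)>0$ with $\abs{\gamma(u)-\gamma(v)}\ge\eta$ for all such pairs; then~\eqref{eq:bil} holds with $C(M,p,q)=\max\{2,(2\eta)^{-1}\}$.

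\textbf{Step 2 (reduction to a perpendicular near-collision).} Suppose no such $\eta$ exists: there are arc-length curves $\gamma_k\in\Cia\rzd$ with $\E(\gamma_k)\le M$ and parameters $u_k,v_k$ with $d_{\R/\Z}(u_k,v_k)\ge\theta$ but $\rho_k:=\abs{\gamma_k(u_k)-\gamma_k(v_k)}\to0$. By Step~1, after a rigid motion the two arcs of half-length $\theta/8$ around $u_k$ and $v_k$ (which are disjoint in parameter since $\theta>\theta/4$) are graphs $\gamma_k(u_k+s)=\gamma_k(u_k)+s\,e_1+E_1^k(s)$, $\gamma_k(v_k+\sigma)=\gamma_k(v_k)+\sigma\, b_k+E_2^k(\sigma)$ with $e_1=\dg_k(u_k)$, $b_k=\dg_k(v_k)$ unit and $\abs{E_i^k(t)}\le C(M)\abs t^{1+\alpha}$. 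Replacing $u_k$ by the parameter minimising $\sigma\mapsto\abs{\gamma_k(v_k)-\gamma_k(u_k+\sigma)}$ over $\abs\sigma\le\theta/8$ (this does not increase $\rho_k$ and keeps the parameter gap $\ge7\theta/8$; one such replacement suffices) we may assume $\nu_k:=\gamma_k(v_k)-\gamma_k(u_k)$ satisfies $\langle\nu_k,e_1\rangle=0$, so that $e_2:=\nu_k/\rho_k\perp e_1$.

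\textbf{Step 3 (energy blow-up at scale $\rho_k$).} Write $b_k^{\perp}:=b_k-\langle b_k,e_1\rangle e_1$ and pick $\varepsilon_k\in\{\pm1\}$ with $\varepsilon_k\langle e_2,b_k^{\perp}\rangle\ge0$. Consider the triples with $t_1\in u_k+[\rho_k,2\rho_k]$, $t_3\in u_k+[-2\rho_k,-\rho_k]$, $t_2\in v_k+[\varepsilon_k\rho_k/2,\varepsilon_k\rho_k]$. Since $\abs{E_i^k}=o(\rho_k)$ on these intervals, a short computation using $\langle\nu_k,e_1\rangle=0$ shows that the three points have pairwise distances $\sim\rho_k$ and span a triangle of area $\tfrac12\abs{t_1-t_3}\,\abs{\rho_k e_2+\sigma b_k^{\perp}}+o(\rho_k^2)$, where by the choice of $\varepsilon_k$ one has $\abs{\rho_k e_2+\sigma b_k^{\perp}}^2\ge\rho_k^2$, hence $\ge\rho_k^2/4$ for $\sigma\in[\varepsilon_k\rho_k/2,\varepsilon_k\rho_k]$ by Lipschitz dependence on $\sigma$; thus the area is $\sim\rho_k^2$. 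Consequently $\RR(\gamma_k(t_1),\gamma_k(t_2),\gamma_k(t_3))\sim\rho_k^{3p-2q}$ on this region, and since the region has measure $\sim\rho_k^3$ and $\abs{\dg_k}\equiv1$,
\[
 \E(\gamma_k)\ \gtrsim\ \rho_k^{\,2q-3p}\cdot\rho_k^{\,3}\ =\ \rho_k^{\,2q+3-3p}\ \xrightarrow[\ k\to\infty\ ]{}\ \infty,
\]
because $2q+3-3p<0$ by~\eqref{eq:self-avoiding}. This contradicts $\E(\gamma_k)\le M$ and proves the proposition, with $\eta$ (hence $C(M,p,q)$) determined quantitatively by $M$, $C(M)$, $\theta(M)$ and $p,q$.

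\textbf{Main obstacle.} The crux is Step~3: the wedge of the two difference vectors is only of order $\rho_k^2$, so the $C^{1,\alpha}$-remainders must be genuinely lower order, which forces one to argue exactly at the single scale $r\sim\rho_k$ and to exclude the (near-)tangential, (near-)collinear configuration of the two strands — this is exactly what the orthogonalisation in Step~2 together with the sign $\varepsilon_k$ accomplishes. An equivalent route is to extract, via Arzel\`a--Ascoli from the uniform $C^{1,\alpha}$ bound, a non-injective $C^1$ arc-length limit curve of energy $\le M$ and to derive a contradiction from the same local blow-up; the geometric bookkeeping is identical. Both routes run parallel to the argument for the tangent-point energies in~\cite{blatt-reiter3}.
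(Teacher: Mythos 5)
Your proposal is correct, and it shares Step 1 (the local bi-Lipschitz estimate via the uniform $C^{0,\alpha}$ bound from Theorem~\ref{thm:energy-space}) with the paper, but the large-scale part is handled by a genuinely different argument. The paper takes the infimum $S$ of $\abs{\g(s)-\g(t)}$ over pairs with $\abs{s-t}\ge\delta$, uses minimality to get \emph{both} orthogonality relations $\g'(s)\perp(\g(s)-\g(t))\perp\g'(t)$, rescales the two strands to unit distance, and invokes a soft compactness result (Lemma~\ref{lem:last}: a direct-method argument showing the normalized two-strand energy has a positive infimum $c(\a,\mu)$ on the class of such configurations), after which the scaling identity $\E(\g_1,\g_2)\le S^{3p-2q-3}\E(\g)$ yields $S\ge(c/M)^{1/(3p-2q-3)}$. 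You instead orthogonalize at one endpoint only and prove the needed lower bound by hand: you exhibit an explicit region of triples of measure $\sim\rho^3$ on which the integrand $\abs{(y-x)\wedge(z-x)}^q/(\abs{y-z}\abs{y-x}\abs{z-x})^p$ is $\gtrsim\rho^{2q-3p}$, the sign choice $\eps_k$ and the single orthogonality $\sp{\nu_k,e_1}=0$ guaranteeing the wedge lower bound $\gtrsim\rho^2$ against the $C^{1,\alpha}$ error terms $O(\rho^{1+\alpha})=o(\rho)$. This gives $\E(\g)\gtrsim\rho^{-(3p-2q-3)}$ and hence the same exponent $\rho\gtrsim M^{-1/(3p-2q-3)}$ as the paper — a useful consistency check. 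What each buys: your route is self-contained and quantitative (the constant is in principle computable, and no separate compactness lemma with a non-explicit infimum is needed), essentially proving the content of Lemma~\ref{lem:last} concretely in the only geometry where it is applied; the paper's route avoids all pointwise geometric bookkeeping at the cost of an inexplicit constant, and isolates the two-strand statement as a reusable lemma.

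Two small points you should make explicit, neither of which is a gap. First, in Step 2 the minimizer of $\sigma\mapsto\abs{\g_k(v_k)-\g_k(u_k+\sigma)}$ over $\abs\sigma\le\theta/8$ is interior for large $k$, since the boundary values are at least $\tfrac12\cdot\tfrac\theta8-\rho_k>\rho_k$ by the local estimate of Step~1 and the triangle inequality, so the first-order condition $\sp{\nu_k,e_1}=0$ is indeed available. Second, for the integrand lower bound you only need the \emph{upper} bound $\lesssim\rho_k$ on the three mutual distances together with the \emph{lower} bound $\gtrsim\rho_k^2$ on the wedge; the claim that all distances are also bounded below by $c\rho_k$ is true but not needed.
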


The proof is based on the following lemma.
%We will give an easy proof that essential boils down to combining 
%the regularity we get form Proposition~\ref{prop:conttang} with
%a subtle scaling argument. The following lemma will be one of the
%essential parts in the proof.
To be able to state it,
we set for two arc-length parametrized curves $\gamma_i : I_i \rightarrow \mathbb R$, $i=1,2$,
$I_1,I_2$ open intervals,
\begin{align*}
 \E(\gamma_1, \gamma_2) &:= \E(\gamma_1) + \E(\gamma_2) + {} \\
% &{}\qquad +
% \int_{I_1} \int_{I_2} \br{\frac{\dist(\ell_1(t), \gamma_2(s))^q }{|\gamma_1(s)-\gamma_2(t)|^p} 
% + \frac{\dist(\ell_2(t),\gamma_1(s))^q }{|\gamma_2(s)-\gamma_1(t)|^p}}\d s\d t, \\
 &{}\qquad +\iiint\limits_{I_1^2\times I_{2}}
 \frac{\abs{\dg_{1}(u_{1})}\abs{\dg_{1}(u_{2})}\abs{\dg_{2}(u_{3})}}{\RR(\g_{1}(u_{1}),\g_{1}(u_{2}),\g_{2}(u_{3}))}
 {\d u_{1}\d u_{2}\d u_{3}} \\
 &{}\qquad +\iiint\limits_{I_1\times I_{2}^{2}}
 \frac{\abs{\dg_{1}(u_{1})}\abs{\dg_{2}(u_{2})}\abs{\dg_{2}(u_{3})}}{\RR(\g_{1}(u_{1}),\g_{2}(u_{2}),\g_{2}(u_{3}))}
 {\d u_{1}\d u_{2}\d u_{3}}.
\end{align*}

%The proof of Proposition~\ref{prop:bi-Lipschitz} is based on

%We then have
\begin{lemma}\label{lem:last}
 Let $\alpha \in (0,1)$. For $\mu>0$ we let
 $M_\mu$ denote the set of all pairs $(\gamma_1, \gamma_2)$
 of curves $\gamma_i \in \Cia([-1,1],\R^n)$ 
 satisfying
 \begin{enumerate}
  \item $|\gamma_1(0) - \gamma_2 (0)| = 1$,
  \item $\g_{1}'(0)\perp\br{\gamma_1(0)-\gamma_2(0)} \perp \gamma_2'(0)$,
  \item $\|\gamma_{i}'\|_{C^{0,\alpha}} \leq \mu$, \qquad$i=1,2$.
 \end{enumerate}
 Then there is a $c=c(\a,\mu)>0$ such that
 \begin{equation*}
  \E(\gamma_1, \gamma_2) \geq c
  \qquad\text{for all }(\gamma_1, \gamma_2) \in M_\mu.
 \end{equation*}
\end{lemma}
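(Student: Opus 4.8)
The plan is to argue by contradiction, exploiting compactness. Suppose the statement fails: then there is a sequence $(\gamma_1^{(k)},\gamma_2^{(k)})\in M_\mu$ with $\E(\gamma_1^{(k)},\gamma_2^{(k)})\to0$. By assumption~(iii) the derivatives $(\gamma_i^{(k)})'$ are uniformly bounded in $C^{0,\alpha}$, and since $\gamma_i^{(k)}$ are parametrized by arc-length with, say, $\gamma_i^{(k)}(0)$ confined to a bounded set by~(i), the curves themselves are uniformly bounded in $C^{1,\alpha}([-1,1],\R^n)$. By Arzel\`a--Ascoli we may pass to a subsequence so that $\gamma_i^{(k)}\to\gamma_i$ in $C^1([-1,1],\R^n)$, and the limit curves $\gamma_1,\gamma_2$ are again arc-length parametrized and satisfy conditions (i) and (ii). In particular the limit configuration is the pair of orthogonal unit-speed segments through $\gamma_1(0)$ and $\gamma_2(0)$ at distance $1$, up to the $C^{1,\alpha}$-bounded perturbation; most importantly, $\gamma_1$ and $\gamma_2$ are two distinct straight-ish strands whose mutual distance stays bounded below on a neighborhood of $0$.

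Next I would show that $\E(\gamma_1,\gamma_2)>0$ for the limit pair, which already gives a contradiction once lower semicontinuity is established. For the limit configuration, pick a small $r>0$; by~(i),(ii) and the $C^1$-convergence, for $|u_1|,|u_2|,|u_3|\le r$ the three points $\gamma_1(u_1),\gamma_1(u_2),\gamma_2(u_3)$ are non-degenerate (not collinear) with circumradius bounded above, and all pairwise distances are bounded below by a positive constant depending only on $r$ (choosing $r$ small enough that the strand through $\gamma_1$ stays near its tangent line and likewise for $\gamma_2$, using the distance-$1$ separation). Hence the integrand $1/\RR$ of the mixed term $\iiint_{I_1^2\times I_2}$ is bounded below by a positive constant on $[-r,r]^3$, so that term is at least $c(r)\,r^3>0$. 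This shows $\E(\gamma_1,\gamma_2)\ge c>0$.

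The remaining — and main — step is to upgrade this to a contradiction with $\E(\gamma_1^{(k)},\gamma_2^{(k)})\to0$, i.e.\ to prove lower semicontinuity of the mixed interaction term along the $C^1$-convergent sequence. Here I would localize to $[-r,r]^3$ as above: for $k$ large, $C^1$-closeness guarantees that for $(u_1,u_2,u_3)\in[-r,r]^3$ the points $\gamma_1^{(k)}(u_1),\gamma_1^{(k)}(u_2),\gamma_2^{(k)}(u_3)$ are also uniformly non-degenerate with pairwise distances bounded below and circumradius bounded above, so on this region the integrand converges uniformly (it is a continuous function of the three points on a compact set bounded away from the degeneracy locus) and is uniformly bounded; dominated convergence then gives that the $[-r,r]^3$-part of the mixed term converges to its positive limit value, contradicting $\E(\gamma_1^{(k)},\gamma_2^{(k)})\to0$. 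The delicate point I expect to be the real obstacle is verifying the uniform lower bound on pairwise distances and the uniform non-degeneracy (bounded circumradius) for the $k$-th configuration on $[-r,r]^3$: one must rule out that the perturbed strand $\gamma_1^{(k)}$ bends back, or that $\gamma_1^{(k)}(u_1)$ and $\gamma_1^{(k)}(u_2)$ come too close together or nearly align with a point on $\gamma_2^{(k)}$ — this is exactly where the $C^{0,\alpha}$-bound on the derivative (assumption~(iii)) and the orthogonality condition~(ii) enter, controlling how far the strands can deviate from straight lines over the short interval $[-r,r]$, with $r=r(\mu)$ chosen small enough to make the deviations harmless.
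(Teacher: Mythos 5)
Your overall strategy --- extract a $C^1$-convergent subsequence via Arzel\`a--Ascoli (after translating: note that (i) only controls the \emph{relative} position of $\gamma_1^{(k)}(0)$ and $\gamma_2^{(k)}(0)$, not their absolute location, which is why the paper subtracts $\gamma_1^{(n)}(0)$ from both curves), observe that $M_\mu$ is closed under this convergence, and combine positivity of the limit energy with lower semicontinuity --- is the same as the paper's. The gap is in your execution of the lower-semicontinuity step. You assert that on $[-r,r]^3$ ``all pairwise distances are bounded below by a positive constant'' and that the integrand ``is a continuous function of the three points on a compact set bounded away from the degeneracy locus.'' For the mixed term $\iiint_{I_1^2\times I_2}$ this is false: two of the three points, $\gamma_1(u_1)$ and $\gamma_1(u_2)$, lie on the \emph{same} strand and coincide on the diagonal $u_1=u_2$, so $[-r,r]^3$ is not bounded away from the degeneracy locus, the angle and circumradius are undefined there, and neither the uniform lower bound on the integrand nor its uniform convergence can be asserted on all of $[-r,r]^3$. (The pointwise lower bound near that diagonal in fact behaves like $|\gamma_1(u_1)-\gamma_1(u_2)|^{q-p}$, which is only harmless when $p>q$; the sub-critical range does not guarantee this for $q>3$.) You flag exactly this as ``the real obstacle'' without resolving it, so the proof is incomplete as written.

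The repair is to drop the localization and the dominated-convergence argument altogether: the integrand of $\E(\cdot,\cdot)$ is nonnegative and converges pointwise almost everywhere under $C^1$ convergence of the curves (namely at every parameter triple with pairwise distinct entries), so Fatou's lemma immediately yields $\E(\tilde\gamma_1,\tilde\gamma_2)\le\liminf_{k}\E(\gamma_1^{(k)},\gamma_2^{(k)})$ with no uniform non-degeneracy or majorant required. That is precisely what the paper does, phrased as the direct method on a minimizing sequence rather than by contradiction (the two are equivalent here). For positivity of the limiting energy, the paper also replaces your pointwise estimate by the cleaner observation that $\E(\gamma_1,\gamma_2)=0$ forces both curves to lie on one single straight line, which conditions (i) and (ii) forbid; your localized lower bound would also work for the limit configuration once restricted away from the diagonal $u_1=u_2$, but it is more work than necessary.
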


\begin{figure}
 \centering
 \includegraphics[scale=.5]{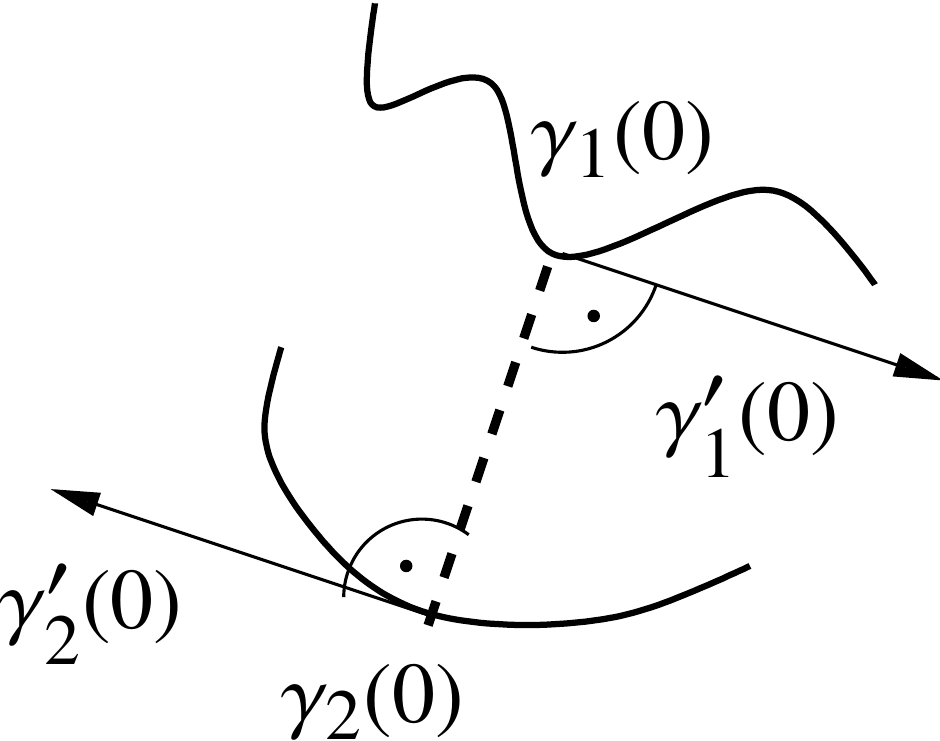}
 \caption{A pair of curves $(\g_1,\g_{2})\in M_{\mu}$
  defined in Lemma~\ref{lem:last}.
  Note that the arcs $\g_{1}$, $\g_{2}$ cannot intersect each other.}
\end{figure}

\begin{proof}
 It is easy to see that $\E(\gamma_1, \gamma_2)$
 is zero if and only if both $\gamma_1$ and $\gamma_2$ 
 are part of one single straight line.
 We will show that $\E(\cdot,\cdot)$ attains its minimum
 on $M_\mu$. As $M_\mu$ does not contain straight lines by~(i), (ii),
 this minimum is strictly positive which thus proves the lemma.
 
 Let $(\gamma_1^{(n)},\gamma_2^{(n)})$ be a minimizing sequence in 
 $M_\mu$, i.e.\@ we have
 \begin{equation*}
  \lim_{n\rightarrow \infty}\E (\gamma_1^{(n)},\gamma_2^{(n)})
  = \inf_{M_\mu} \E(\cdot,\cdot).
 \end{equation*}
 Subtracting $\g_{1}(0)$ from \emph{both} curves, i.e.\@ setting
 \begin{equation*}
  \tilde \gamma_i^{(n)}(\tau) := \gamma_i^{(n)}(\tau) - \gamma_{1}(0),
  \qquad i = 1,2,
 \end{equation*}
 and using Arzel\`a-Ascoli we can pass to a subsequence such that
 \begin{equation*}
  \tilde \gamma_i^{(n)} \to \tilde \gamma_i\qquad\text{in }C^{1}.
 \end{equation*}
 Furthermore, $(\tilde \gamma_1, \tilde \gamma_2) \in M_\mu$
 since $M_\mu$ is closed under convergence in $C^1$.
 Since, by Fatou's lemma, the functional $\E$ is lower semi-continuous
 with respect to $C^1$ convergence, we obtain
 \begin{equation*}
  \E(\tilde \gamma_1, \tilde \gamma_2) 
  \leq \lim_{n\rightarrow \infty} \E (\tilde \gamma_1^{(n)}, \tilde \gamma_2^{(n)})
  = \lim_{n\rightarrow \infty} \E ( \gamma_1^{(n)}, \gamma_2^{(n)})
  =  \inf_{M_\mu} \E(\cdot,\cdot).
  \end{equation*}
\end{proof}

Let us use this lemma to give the

\begin{proof}[Proposition~\ref{prop:bi-Lipschitz}]
Applying Theorem~\ref{thm:energy-space} to~\eqref{eq:TP<M} we obtain
%\begin{equation*}
 $\|\gamma'\|_{C^{0,\alpha}} \leq C(M)$
%\end{equation*}
 for $\alpha = 3\frac {p-1} q -2\in(0,1-\frac1q)$.
As an immediate consequence there is a $\delta=\delta(\a,M)>0$
such that
\begin{equation}\label{eq:bil2}
 |u-v| \leq 2 |\gamma(u)- \gamma(v)|
\end{equation}
for all $u,v \in \mathbb R / \mathbb Z$ with $|u-v| \leq \delta.$
Let now
\[ S := \inf \sets{\!\rule{0ex}{1em}\abs{\gamma(u)-\gamma(v)}}{u,v \in \R/\Z,\abs{u-v}\geq\delta} \leq \tfrac12. \]
 We will complete the proof by estimating $S$ from below.
 Using the compactness of $\set{u,v \in \R/\Z,\abs{u-v}\geq\delta}$,
 there are $s,t \in \mathbb R / \mathbb Z$ with 
$|s-t| \geq \delta$ and
%\begin{equation*}
 $|\gamma(s)-\gamma(t)| =S$.
%\end{equation*}
If now $|s-t| =\delta$ we obtain
\begin{equation*}
 2 S = 2|\gamma (s) - \gamma(t)| \refeq[\ge]{bil2} \delta
\end{equation*}
and hence
\begin{equation*}
 |u-v| \leq \tfrac12 \le \frac S\delta \le \frac{\abs{\g(u)-\g(v)}}{\delta(\a,M)}
\end{equation*}
for all $u,v \in \mathbb R / \mathbb Z$ with $|u-v| \geq \delta$.
This proves the proposition in this case.
If on the other hand $|s-t| > \delta$ then we infer using the
minimality of $|\gamma(s)-\gamma(t)|$
\begin{equation*}
 \gamma'(s) \perp (\gamma(s) - \gamma(t)) \perp \g'(t).
\end{equation*}
We define for $\tau \in [-1,1]$
\begin{equation*}
 \gamma_1 (\tau) := \frac 1 S \gamma(s+ S\tau)
 \qquad\text{and}\qquad
 \gamma_2(\tau) := \frac 1 S \gamma(t + S \tau).
\end{equation*}
Since 
%\begin{equation*}
$\| \gamma'_i\|_{C^{0,\alpha}} \leq \| \gamma'\|_{C^{0,\alpha}}
{\le C(M)}$
%\end{equation*}
we may apply Lemma~\ref{lem:last} which yields
\begin{equation*}
 \E(\gamma_1,\gamma_2) \geq c(\a,M) >0.
\end{equation*}
Together with 
%\begin{equation*}
 $\E (\gamma_1, \gamma_2) \leq S^{3p-2q-3} \E (\gamma)$
%\end{equation*}
this leads to
\begin{equation*}
 S \geq \left( \frac {c(\a,M)}{\E(\gamma)}\right)^{\frac 1 {3p-2q-3}}
 \ge\br{\frac{c(\a,M)}{M}}^{\frac1{3p-2q-3}}.
\end{equation*}
Hence,
$\abs{u-v} \le\tfrac12\le\frac{\abs{\g(u)-\g(v)}}{2S}
   \le C(M,p,q)\abs{\g(u)-\g(v)}$
   for all $u,v \in \mathbb R / \mathbb Z$ with $|u-v| \geq \delta$.
\end{proof}

We are now in the position to prove the
compactness result which is crucial both to
the existence of minimizers
in any knot class
and to the self-avoiding behavior of the energies.

\begin{proposition}[Sequential compactness]\label{prop:sequentiallycompact}
 For each $M< \infty$ the set
 \begin{equation*}
  A_M:=\sets{\gamma \in \Cia(\mathbb R / \mathbb Z, \mathbb R^n)}{ 
  \E(\gamma) \leq M}
 \end{equation*}
 is sequentially compact in $C^1$ up to translations.
\end{proposition}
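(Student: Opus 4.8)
The plan is to combine the uniform bi-Lipschitz estimate of Proposition~\ref{prop:bi-Lipschitz} with the function-space bound of Theorem~\ref{thm:energy-space} and a standard Arzel\`a--Ascoli compactness argument. Concretely, let $(\g_k)_{k\in\N}$ be a sequence in $A_M$. After applying a translation to each $\g_k$ (using translation invariance of $\E$), we may assume $\g_k(0)=0$ for all $k$. Since $\g_k$ is parametrized by arc-length, $\lnorm[\infty]{\g_k'}=1$; moreover Theorem~\ref{thm:energy-space} gives a uniform bound $\norm{\g_k}_{\W}\le C(M,p,q)$, and since $s=(3p-2)/q-1>\tfrac1q$ in the sub-critical range, the embedding~\eqref{eq:Wembed} yields a uniform bound on $\seminorm{\g_k'}_{C^{0,\alpha}}$ with $\alpha=3(p-1)/q-2>0$. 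Hence $(\g_k)$ and $(\g_k')$ are uniformly bounded and equicontinuous on $\R/\Z$, so by Arzel\`a--Ascoli we can pass to a subsequence (not relabelled) with $\g_k\to\g$ in $C^1(\R/\Z,\R^n)$.

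It remains to check that the limit $\g$ lies in $\Cia(\R/\Z,\R^n)$, i.e.\@ that $\g$ is an arc-length parametrized \emph{injective} curve. Arc-length parametrization passes to the $C^1$-limit since $\abs{\g'(u)}=\lim_k\abs{\g_k'(u)}=1$ pointwise. Injectivity is the only nontrivial point: a priori a sequence of embeddings can converge to a curve with a self-contact. Here the uniform bi-Lipschitz estimate~\eqref{eq:bil} of Proposition~\ref{prop:bi-Lipschitz} does the work: for every $k$ and all $u,v\in\R/\Z$ one has $\abs{u-v}\le C(M,p,q)\abs{\g_k(u)-\g_k(v)}$, and letting $k\to\infty$ this inequality is preserved, giving $\abs{u-v}\le C(M,p,q)\abs{\g(u)-\g(v)}$. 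In particular $\g(u)=\g(v)$ forces $u=v$, so $\g$ is injective, hence $\g\in\Cia(\R/\Z,\R^n)$ and $\g\in A_M$ by lower semicontinuity of $\E$ under $C^1$-convergence (Fatou's lemma, as already used in the proof of Lemma~\ref{lem:last}). This exhibits a convergent subsequence with limit in $A_M$, proving sequential compactness up to translations.

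The main obstacle is precisely the preservation of injectivity in the limit, which is exactly why the uniform bi-Lipschitz bound of Proposition~\ref{prop:bi-Lipschitz} was established beforehand; without it the argument would break down, as the example in Remark~\ref{rem:non-rep} shows for parameters outside the sub-critical range. Everything else is the routine Arzel\`a--Ascoli machinery together with lower semicontinuity, and the ``up to translations'' qualifier simply accounts for the normalization $\g_k(0)=0$ used to obtain boundedness in sup-norm.
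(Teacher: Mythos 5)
Your proposal is correct and follows essentially the same route as the paper: a uniform $C^{1,\alpha}$ bound from Theorem~\ref{thm:energy-space}, Arzel\`a--Ascoli after normalizing by a translation, preservation of arc-length parametrization and of the bi-Lipschitz estimate~\eqref{eq:bil} in the $C^1$-limit to secure injectivity, and lower semicontinuity of $\E$ to conclude $\g_0\in A_M$. No gaps.
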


\begin{proof}
 By Theorem~\ref{thm:energy-space}
 there are $C(M)< \infty$ and $\a=\a(p,q)>0$ such that
% \begin{equation*}
  $\|\dg\|_{C^\alpha} \leq C(M)$
% \end{equation*}
 for all $\g\in A_M$ and hence
 \begin{equation*}
  \|\tilde \g\|_{C^{1,\alpha}} \leq C(M) + 1
 \end{equation*}
 where $\tilde \g (u) := \g(u)-\g(0)$.
% Furthermore, from
% Proposition~\ref{prop:bi-Lipschitz} we get
% the bi-Lipschitz estimate
% \begin{equation*}
%  |u-v| \leq C(M,p,q) |\gamma(u)-\gamma(v)|
% \end{equation*}
% for all $u,v \in \mathbb R / \mathbb Z$.
 By Proposition~\ref{prop:bi-Lipschitz},
 the bi-Lipschitz estimate~\eqref{eq:bil} holds.
 
 Let now $\g_n \in A_M$. Then
 \begin{equation*}
  \|\tilde \g_n\|_{C^{1,\alpha}} \leq C(M) + 1
 \end{equation*}
 and hence after passing to suitable subsequence we have
% \begin{equation*}
  $\tilde \g_n \rightarrow \g_{0}$
% \end{equation*}
 in $C^1$. Since $\g_n$ was parametrized by arc-length,
 $\g_0$ is still parametrized by arc-length and
% still
% \begin{equation*}
%  |u-v| \leq C(M,p,q) |\gamma_{0}(u) - \gamma_{0}(v)|
% \end{equation*}
% for all $u,v \in \mathbb R / \mathbb Z$.
 the bi-Lipschitz estimate carries over to $\g_{0}$.
 So, especially, $\g_{0} \in \Cia(\mathbb R / \mathbb Z, \mathbb R^n)$.
 From lower semi-continuity with respect 
 to $C^1$ convergence we infer
 \begin{equation*}
  \E(\g_0) \leq \liminf_{n\rightarrow \infty} \E(\g_n) \leq M.
 \end{equation*}
 So $\g_0 \in A_M$.
\end{proof}

We may now pass to the

\begin{proof}[Theorem~\ref{thm:existence}]
 Let $\seqn[k]{\g}\in \Cia$ be a minimal sequence for $\E$ in a given knot class $K$,
 i.e.\@ let
 \begin{equation*}
  \lim_{k\rightarrow \infty} \E(\g_{k}) = \inf_{\Cia\cap K} \E.
 \end{equation*}
 After passing to a subsequence and suitable translations, we hence get by Proposition~\ref{prop:sequentiallycompact} 
 a ~$\g_0\in \Cia$ with $\g_k\to\g_0$ in $C^1$.
 As the intersection of every knot class with $C^1$
 is an open set in $C^1$~\cite[Cor.~1.5]{blatt:isot}
 (see~\cite{reiter:isot} for an explicit construction),
 the curve $\g_0$ belongs to the same knot class
 as the elements of the minimal sequence $\seqn[k]{\g}$. 
 The lower semi-continuity of $\E$ furthermore implies
 \begin{equation*}
  \inf_{\Cia\cap K} \E\leq \E(\g_0)
  \leq  \lim_{n\rightarrow \infty} \E(\g_n)=\inf_{\Cia\cap K}\E.
 \end{equation*}
 Hence, $\g_0$ is the minimizer we have been searching for.
\end{proof}

By the same reasoning one derives the existence of a global minimizer of~$\E$.

Let us conclude this section by deriving that the generalized integral Menger curvature
are in fact knot energies (in the sub-critical range).

\begin{proposition}[$\E$ is a strong knot energy {\cite[Cor.~2.3]{SM9}}]\label{prop:strong-knot-energy}%\hfill\\
Let~\eqref{eq:sub-critical} hold.\\[-2em]
 \begin{enumerate}
  \item If $\seqn[k]{\g}$ %\subset\Wir$
  is a sequence of embedded $\W$-curves uniformly converging to a non-injective curve $\g_\infty\in C^{0,1}$ parametrized by arc-length then $\E(\g_k)\to\infty$.
  \item For given $E,L>0$ there are only finitely many knot types having a representative with $\E<E$ and $\text{length}=L$.
 \end{enumerate}
\end{proposition}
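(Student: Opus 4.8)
The plan is to derive both statements from the compactness and self-repulsiveness machinery already established in this section, essentially as in~\cite[Cor.~2.3]{SM9} and the analogous statement in~\cite{blatt-reiter3}.

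For part~(i), suppose $\seqn[k]{\g}$ is a sequence of embedded $\W$-curves with $\g_k\to\g_\infty$ uniformly, where $\g_\infty$ is arc-length parametrized and non-injective. Assume for contradiction that $\E(\g_k)$ does not tend to infinity; then, after passing to a subsequence, $\E(\g_k)\le M$ for some finite $M$, so all the $\g_k$ lie in the set $A_M$ of Proposition~\ref{prop:sequentiallycompact}. By that proposition, after a further subsequence and suitable translations $\tilde\g_k:=\g_k-\g_k(0)$ converge in $C^1$ to some $\g_0\in\Cia(\R/\Z,\R^n)$; in particular $\g_0$ is \emph{injective}. But $C^1$-convergence implies uniform convergence, and uniform limits are unique up to the translations we performed, so $\g_0$ agrees with $\g_\infty$ up to a translation. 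Since translation preserves injectivity, this forces $\g_\infty$ to be injective, contradicting the hypothesis. Hence $\E(\g_k)\to\infty$, which is exactly the self-repulsiveness property; combined with the lower bound $\E\ge0$ this shows $\E$ is a knot energy in the sense of O'Hara.

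For part~(ii), fix $E,L>0$ and suppose, for contradiction, that there are infinitely many distinct knot types $K_1,K_2,\dots$ each admitting a representative $\g_k\in\Cia$ with $\E(\g_k)<E$ and length $L$. After reparametrizing to constant speed (which rescales the domain to $\R/L\Z$, or equivalently after rescaling to unit length, noting that $\E$ behaves well under scaling and that a uniform length bound gives a uniform energy bound of the same order) we may assume all $\g_k$ are arc-length parametrized on a common domain with $\E(\g_k)\le M$ for a fixed $M$. By Proposition~\ref{prop:sequentiallycompact} a subsequence of the translated curves converges in $C^1$ to some $\g_0\in\Cia$. Since the intersection of each knot class with $C^1$ is open in $C^1$~\cite[Cor.~1.5]{blatt:isot}, the $C^1$-neighbourhood of $\g_0$ witnessing this openness must contain $\g_k$ for all large $k$; but all such $\g_k$ then lie in the \emph{same} knot class as $\g_0$, contradicting that the $K_k$ were pairwise distinct. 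Hence only finitely many knot types can have a representative with $\E<E$ and length $L$.

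The only genuine subtlety is the bookkeeping around scaling and parametrization in part~(ii): one must pass from a length constraint to the arc-length-parametrized, uniformly-energy-bounded setting in which Proposition~\ref{prop:sequentiallycompact} applies. This is routine — under the scaling $\g\mapsto\lambda\g$ the energy $\E$ picks up the factor $\lambda^{3p-2q-3}$ visible already in the proof of Proposition~\ref{prop:bi-Lipschitz}, and $3p-2q-3<0$ in the sub-critical range, so normalizing the length to $1$ only decreases (or controls) the energy. Everything else is a direct reuse of the sequential compactness and the openness of knot classes in $C^1$, so no new estimates are needed.
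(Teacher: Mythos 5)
Your proof is correct and follows essentially the same route as the paper: both parts reduce to the sequential compactness of energy-bounded sets (Proposition~\ref{prop:sequentiallycompact}) combined with the openness of knot classes in $C^1$, exactly as in the paper's argument. The only slip is the parenthetical scaling remark in part~(ii): in the sub-critical range one has $3p-2q-3\in(0,q-1)$, i.e.\@ the exponent is \emph{positive}, not negative, but this is harmless since normalizing a fixed length $L$ to $1$ multiplies all energies by the same constant factor, so the uniform energy bound survives either way.
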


\begin{proof}
 The first statement immediately follows from the bi-Lipschitz estimate
 in Proposition~\ref{prop:bi-Lipschitz}, as a sequence with bounded energy 
 would be sequentially compact in $\Cia$ and thus cannot uniformly converge 
 to a non-injective curve.   
 
 To show the second statement, let us assume that it was wrong, i.e.,
 that there are curves $\seqn[k]{\g}$ of length $L$,
 all belonging to different knot classes,
 with energy less than $E$. Of course we can assume that
 $L=1$.
 After applying suitable transformations and passing to a subsequence,
 Proposition~\ref{prop:sequentiallycompact} guarantees the existence
 of $\g_{0}\in A_{M}$ with $\g_{k}\to\g_{0}$ in $C^1$.
 Again by~\cite{blatt:isot,reiter:isot}
 this implies that almost all $\g_k$ belong to the
 same knot class as $\g_0$, which is a contradiction.
\end{proof}
% !TEX root = statpointmenger.tex
\section{\secC}

Recall that we have for $v,w \in D$
\begin{equation*}
 |v|,|w| \leq |v-w| \leq \tfrac 2 3.
\end{equation*}
Hence,  we obtain for a each curve $\g \in \Cia[0,1](\mathbb R / \mathbb Z)$
with $\E(\g) < \infty $ due to the bi-Lipschitz estimate
\begin{equation*}
 |\g(u+v) - \g(u)| \simeq |v|, \quad \quad
 |\g(u+w) - \g(u)| \simeq |w|, \quad \quad
 |\g(u+v) - \g(u+w)| \simeq |v-w|
\end{equation*}
for all $(u,v,w) \in \mathbb R / \mathbb Z \times D$.
(Here $a\simeq b$ is an abbreviation for the existence of uniform
constants $0<c\le C<\infty$ with $cb\le a\le Cb$.)

We derive the following form for the first variation which is at first site 
much more complicated than the formula derived by Hermes~\cite{hermes}, but due to the special structure of $D$
it is easier to do estimates using this formula.
We abbreviate
\[ R^{p,q}(u_{1},u_{2},u_{3}) := \RR(\g(u_{1}),\g(u_{2}),\g(u_{3})) \]
and we still use
\begin{equation}\tag{\ref{eq:short-notation}}
 \T[\bullet]{v}{w}:=\D[\bullet]vw.
\end{equation}
In contrast to O'Hara's knot energies, we can use a rather direct argument to deduce that
the integral Menger curvature is G\^ateaux differentiable by investigating the integrand, i.e.\@ by looking at the 
Lagrangian
\begin{equation*}
 L(\gamma)(u,v,w) := \frac {\left|\T w0 \wedge \T v0 \right|^q}{|\T w0|^{q}|\T v0|^q |\T vw|^q}|\gamma'(u)| |\gamma'(u+v)| |\gamma'(u+w)|.
\end{equation*}
For $\gamma, h \in \W$ as in the statement of the lemma and $\gamma_\tau := \gamma + \tau h$ one calculates
\begin{align*}
 &\delta L(\gamma;h) (u,v,w) :=\left.  \frac {\partial}{\partial \tau }\left(L_{\gamma_\tau}(u,v,w) \right) \right|_{\tau =0} \\
  &= \Bigg\{ q \left| \T w0 \wedge \T v0 \right|^{q-2}  \frac {\sp{\T w0 \wedge \T v0 ,\T[h] w0 \wedge \T v0 + \T w0 \wedge \T[h] v0 }}{|\T w0|^{p}|\T v0|^p |\T vw|^p} \\
  &- p \frac {\left|\T w0 \wedge \T v0 \right|^q}{|\T w0|^{p+2}|\T v0|^p |\T vw|^p}  \cdot  \sp{\T w0, \T[h] w0} \\
  &- p \frac {\left|\T w0 \wedge \T v0 \right|^q}{|\T w0|^{p}|\T v0|^{p+2} |\T vw|^p}  \cdot  \sp{\T v0, \T[h] v0} \\
  &- p \frac {\left|\T w0 \wedge \T v0 \right|^q}{|\T w0|^{p}|\T v0|^p |\T vw|^{p+2}}  \cdot  \sp{\T vw, \T[h] vw} \\
   &+ R^{p,q}(u, u+w, u+v) \sp{\frac {\dg (u)}{\abs{\dg (u)} }, \frac{\dh(u)}{\abs{\dg (u)} }} \\
  &+ R^{p,q}(u, u+w, u+v) \sp{\frac {\dg (u+v)}{\abs{\dg (u+v)} }, \frac{\dh(u+v)}{\abs{\dg (u+v)} }} \\
  &+ R^{p,q}(u, u+w, u+v) \sp{\frac {\dg (u+w)}{\abs{\dg (u+w)} }, \frac{\dh(u+w)}{\abs{\dg (u+w)} }} \Bigg\} |\dg(u+w)| |\dg(u+v)| |\dg(u)|.
\end{align*}
For future reference, we denote the seven terms one obtains from this formula after factoring out the 
outermost bracket by $\delta L_1, \ldots, \delta L_7$.

\begin{lemma}\label{lem:diff}
 Let \eqref{eq:sub-critical} hold and $\gamma \in \W\rzd$ be an injective regular curve. Then $\E$
 is G\^ateaux differentiable in $\gamma$ and 
 the first variation in direction $h \in \W \rzd$ is given by
 \begin{align}\label{eq:diff}
  \delta \E(\gamma;h) = 6 \iiint_{\mathbb R / \mathbb Z \times D}\delta L(\gamma;h)(u,v,w) \d w \d v \d u
\end{align}
and~\eqref{eq:dE-intro} holds.
% where
%\begin{align*}
% \delta L(\gamma;h) :=&
%  \Bigg(q \left|\T w0 \wedge \T v0\right|^{q-2}  \frac {\sp{\T w0 \wedge \T v0 ,\T[h] w0 \wedge \T v0 + \T w0 \wedge \T[h] v0 }}{|\T w0|^{p}|\T v0|^p |\T vw|^p}   \\ 
%  &- p \frac {\left|\T w0 \wedge \T v0 \right|^q}{|\T w0|^{p-2}|\T v0|^p |\T vw|^p}  \cdot  \sp{\T w0, \T[h] w0} \\
%  &- p \frac {\left|\T w0 \wedge \T v0 \right|^q}{|\T w0|^{p}|\T v0|^{p-2} |\T vw|^p}  \cdot  \sp{\T v0, \T[h] v0} \\
%  &- p \frac {\left|\T w0 \wedge \T v0 \right|^q}{|\T w0|^{p}|\T v0|^p |\T vw|^{p-2}}  \cdot  \sp{\T vw, \T[h] vw} \\
%  &+ R^{p,q}(u, u+w, u+v) \sp{\frac {\dg (u)}{\abs{\dg (u)} }, \frac{\dh(u)}{\abs{\dg (u)} }} \\
%  &+ R^{p,q}(u, u+w, u+v) \sp{\frac {\dg (u+v)}{\abs{\dg (u+v)} }, \frac{\dh(u+v)}{\abs{\dg (u+v)} }} \\
%  &+ R^{p,q}(u, u+w, u+v) \sp{\frac {\dg (u+w)}{\abs{\dg (u+w)} }, \frac{\dh(u+w)}{\abs{\dg (u+w)} }}
% \Bigg)  \\ &|\dg(u+w)| |\dg(u+v)| |\dg(u)| .
% \end{align*}
\end{lemma}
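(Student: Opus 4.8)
\emph{Approach.} The plan is to differentiate under the integral sign in the representation $\E(\g)=6\iiint_{\R/\Z\times D}L(\g)(u,v,w)\,\d w\,\d v\,\d u$ furnished by Lemma~\ref{lem:domdec}, by dominated convergence. Write $\g_\tau:=\g+\tau h$. First I would secure uniform control of the family $\{\g_\tau\}$ for $\abs\tau\le\tau_0$ small: since $\g$ is regular, $\abs{\g_\tau'}$ is bounded below uniformly; since the set of embeddings is open in $C^1$ \cite[Cor.~1.5]{blatt:isot}, each $\g_\tau$ stays embedded; and since $\norm{\g_\tau}_\W$ stays bounded (and likewise its seminorm, $\seminorm{\g_\tau}_\W\le\seminorm\g_\W+\tau_0\seminorm h_\W$), reparametrizing to arc-length and invoking~\eqref{eq:intM<seminorm} gives $\E(\g_\tau)\le M$ for a uniform $M$, so that Proposition~\ref{prop:bi-Lipschitz} supplies a uniform bi-Lipschitz constant. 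Since $\abs v,\abs w\le\abs{v-w}\le\tfrac23$ on $D$ (no wrap-around in $\R/\Z$), this yields $\abs{\T[\g_\tau]v0}\simeq\abs v$, $\abs{\T[\g_\tau]w0}\simeq\abs w$ and $\abs{\T[\g_\tau]vw}\simeq\abs{v-w}$ on $\R/\Z\times D$, uniformly in $\abs\tau\le\tau_0$. Hence every denominator occurring in $L(\g_\tau)$ and $\delta L(\g_\tau;h)$ stays bounded away from $0$, and for fixed $(u,v,w)$ with $v\ne0$, $w\ne0$, $v\ne w$ the map $\tau\mapsto L(\g_\tau)(u,v,w)$ is smooth with first derivative precisely $\delta L(\g_\tau;h)(u,v,w)$.

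\emph{The core estimate}. The heart of the proof is a uniform integrable majorant: there is $g\in L^1(\R/\Z\times D)$ with $\abs{\delta L(\g_\sigma;h)(u,v,w)}\le g(u,v,w)$ for all $\abs\sigma\le\tau_0$. This is proved term by term for $\delta L_1,\dots,\delta L_7$ along the lines of the proof of Theorem~\ref{thm:energy-space}. The denominators are handled by the comparabilities above; the $h$-factors by $\abs{\T[h]v0}\le\norm{h'}_\infty\abs v$ (and likewise in $w$), so that $h$ enters only linearly and through an $L^\infty$-norm; and --- decisively --- the wedge and difference expressions in the numerators are \emph{not} bounded by the plain $C^{0,\alpha}$-estimate for $\g_\sigma'$, but are kept in the Slobodecki{\u\i} difference-quotient form $\int_0^1\abs{\g_\sigma'(u+\theta v)-\g_\sigma'(u+\theta w)}^q\,\d\theta$, using the antisymmetry $a\wedge b=a\wedge(b-\lambda a)$ and, for $\delta L_1$, the Cauchy--Binet identity $\sp{a\wedge b,c\wedge b}=\abs b^2\sp{a-\tfrac{\sp{a,b}}{\abs b^2}b,c}$, which writes $\sp{W,\delta W}$ as $\abs{\T w0}^2$ (respectively $\abs{\T v0}^2$) times a pairing with the transversal part of $\T v0$ relative to $\T w0$ --- this transversal part carrying exactly the difference-quotient gain; the prefactor $\abs W^{q-2}$ in $\delta L_1$ is harmless also for $q<2$ because $\abs{\sp{W,\delta W}}\le\abs W\abs{\delta W}$ absorbs one power of the small wedge. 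After the substitution used in~\eqref{eq:intM<seminorm} the bounding integral is controlled by $\seminorm\g_\W^q+\norm{\g'}_\infty^q$ times a constant depending on $\norm{h'}_\infty$, which is finite; the exponent bookkeeping closes precisely because~\eqref{eq:sub-critical} holds.

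\emph{Conclusion}. With the majorant in hand, G\^ateaux differentiability is routine: for fixed $(u,v,w)$ the mean value theorem gives $\tau^{-1}\br{L(\g_\tau)-L(\g)}(u,v,w)=\delta L(\g_\sigma;h)(u,v,w)$ for some $\sigma$ between $0$ and $\tau$, this is dominated by $g$, and it tends to $\delta L(\g;h)(u,v,w)$ as $\tau\to0$ since $\g_\tau\to\g$ in $C^1$ and $\delta L$ depends continuously on its (finitely many) vector arguments off the singular set; dominated convergence then yields~\eqref{eq:diff}. To obtain~\eqref{eq:dE-intro}, note that $\delta L(\g;h)(u,v,w)$ is the value at $(u_1,u_2,u_3)=(u,u+v,u+w)$ of the first variation $\delta F(\g;h)$ of the ($\mathfrak S_3$-symmetric) integrand $\abs{\g'(u_1)}\abs{\g'(u_2)}\abs{\g'(u_3)}\big/\RR(\g(u_1),\g(u_2),\g(u_3))$ of $\E$; by Lemma~\ref{lem:domdec} and the majorant $\delta F(\g;h)\in L^1\br{(\R/\Z)^3}$, so re-applying Lemma~\ref{lem:domdec} gives $\delta\E(\g;h)=\iiint_{(\R/\Z)^3}\delta F(\g;h)$. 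Specializing to the arc-length parametrization (all $\abs{\g'(u_i)}=1$) and using the $\mathfrak S_3$-symmetry of the weight $\abs{\T v0\wedge\T w0}^q\big/\br{\abs{\T vw}\abs{\T v0}\abs{\T w0}}^p$, the symmetry merges the two wedge cross-terms of $\delta L_1$ (interchanged by the transposition $(u_2\,u_3)$) into $2q\abs{\T v0\wedge\T w0}^{q-2}(\cdots)^{-p}\sp{\T v0\wedge\T w0,\T v0\wedge\T[h]w0}$, the three pairwise terms $\delta L_2,\delta L_3,\delta L_4$ into $-3p\abs{\T v0\wedge\T w0}^q(\cdots)^{-p}\sp{\T vw,\T[h]vw}\abs{\T vw}^{-2}$, and the three tangential terms $\delta L_5,\delta L_6,\delta L_7$ into $+3\abs{\T v0\wedge\T w0}^q(\cdots)^{-p}\sp{\dg,\dh}$; this is exactly~\eqref{eq:dE-intro}.

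\emph{Main obstacle}. The crux is the uniform integrable majorant of the second step. On one hand, the uniform bi-Lipschitz bound for the perturbed family $\{\g_\tau\}_{\abs\tau\le\tau_0}$ must be obtained without circularity, bounding $\E(\g_\tau)$ via~\eqref{eq:intM<seminorm} rather than via the continuity of $\E$ being proved. On the other hand --- the genuinely delicate point --- the seven summands must be estimated keeping the Slobodecki{\u\i} difference quotients throughout (the plain $C^{0,\alpha}$-bounds are too lossy to yield integrability), so that the exponent arithmetic matches the sub-critical window~\eqref{eq:sub-critical}, exactly as in the proof of Theorem~\ref{thm:energy-space}.
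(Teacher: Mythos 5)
Your proposal is correct and follows essentially the same route as the paper: one differentiates under the integral over $\R/\Z\times D$, builds a $\tau$-uniform $L^1$-majorant by rewriting the wedge and norm factors in terms of the difference quotients $\frac{\g(u+w)-\g(u)}{w}-\frac{\g(u+v)-\g(u)}{v}$ (so that the computation behind \eqref{eq:intM<seminorm}, restated as \eqref{eq:esttripleintegral}, closes the exponent arithmetic in the sub-critical range), applies dominated convergence, and then symmetrizes via Lemma~\ref{lem:domdec} to obtain \eqref{eq:dE-intro}. The one step you should repair is your derivation of the uniform bi-Lipschitz bound for $\set{\g_\tau}_{\abs\tau\le\tau_0}$: deducing $\E(\g_\tau)\le M$ from \eqref{eq:intM<seminorm} in order to invoke Proposition~\ref{prop:bi-Lipschitz} is circular, because the constant in \eqref{eq:intM<seminorm} already depends on the bi-Lipschitz constant of the curve; the paper instead simply fixes a $C^1$-neighborhood $U$ of $\g$ on which the lower bound on $\abs{\tilde\g'}$ and the Lipschitz/bi-Lipschitz bounds hold uniformly, which follows elementarily from $C^1$-perturbation of the fixed embedded regular curve $\g$.
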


\begin{proof}
Let $U$ be a neighborhood of $\gamma$ in $\W \subset C^{(3p-3)/q-1} \subset C^1$ consisting only of regular curves with
\begin{equation*}
 \inf_{\tilde \gamma \in U, u \in \mathbb R / \mathbb Z} |\tilde \gamma '(u)| =: M_1 >0 
\end{equation*}
and
\begin{equation*}
 \sup_{\tilde \gamma \in U, u\not=v \in \mathbb R / \mathbb Z} \frac {|\tilde \gamma(u) - \tilde \gamma(v)|}{|u-v|} =: M_2 < \infty. 
\end{equation*}
Using 
\begin{multline*}
 \sp{\frac {\T w0} w \wedge \frac{\T v0} v ,\frac {\T[h] w0} w \wedge \frac {\T v0} v 
 + \frac{\T w0} w\wedge \frac {\T[h] v0} v }
 \\= \sp{\left( \frac{\T w0} w - \frac {\T v0}v \right) \wedge \frac{\T v0} v ,
	\left( \frac{\T[h] w0} w - \frac {\T[h] v0}v \right) \wedge \frac{\T v0} v  
	+ \left( \frac{\T w0} w - \frac {\T v0}v \right) \wedge \frac{\T[h] v0} v},
\end{multline*}
\begin{equation*}
 \frac {\T 0w} w \wedge \frac { \T  0v} v = \left( \frac {\T 0w} w- \frac{\T 0v}v \right) \wedge \frac{\T 0v}v,
\end{equation*}
and
\begin{equation*}
 R^{p,q}(u,v,w) = \frac {\left|\left( \frac {\T 0w} w- \frac{\T 0v}v \right) \wedge \T 0v\right|^q}{|w|^{-q} \abs{\T0w}^{p}\abs{\T v0}^{p}\abs{\T vw}^{p}}
\end{equation*}
together with the bi-Lipschitz estimate we infer for $\tilde \gamma \in U$
\begin{equation}  \label{eq:estimateintegrand}
  \left|\delta L(\tilde \gamma; h)(u,v,w) \right|
 \leq C  \frac{|\frac {\T[\tilde \gamma] w0} w - \frac {\T[\tilde \gamma] v0} v|^q \|h'\|_{L^\infty} +
  |\frac {\T[\tilde \gamma] w0} w - \frac {\T[\tilde \gamma] v0} v|^{q-1}|\frac {\T[h] w0} w - \frac {\T[h] v0} v|}{|w|^{p-q} |v|^{p-q} |v-w|^p}.
\end{equation}
So for $0<|\tau|\leq 1$ so small that $\gamma_\tau \in U$
\begin{equation} \label{eq:majorant}
\begin{split}
 &\left|\frac {\partial}{\partial \tau }L(\gamma_\tau)(u,v,w) \right| \\
 &\leq C  \frac{|\frac {\T[\gamma] w0} w - \frac {\T[\gamma] v0} v|^q \|h'\|_{L^\infty} + |\frac {\T[h] w0} w - \frac {\T[h] v0} v|^q \|h'\|_{L^\infty} +
  |\frac {\T[\gamma] w0} w - \frac {\T[\gamma] v0} v|^{q-1}|\frac {\T[h] w0} w - \frac {\T[h] v0} v| + |\frac {\T[h] w0} w - \frac {\T[h] v0} v| ^q }{|w|^{p-q} |v|^{p-q} |v-w|^p}
 \\ &=: g(u,v,w)
 \end{split}
\end{equation}
where $g$ does not depend on $\tau$ and $C$ depends on $M_1, M_2, p, q$ only.

For $f \in \W \rzd$, we have
\begin{equation}\label{eq:esttripleintegral}
\begin{split}
   &\iiint_{\mathbb R / \mathbb Z \times D }\frac{|\frac {\T[f] w0} w - \frac {\T[f] v0} v|^q  }
   {|w|^{p-q} |v|^{p-q} |v-w|^p} \d v \d w \d u \\
  & = \iiint\limits_{\mathbb R / \mathbb Z \times D } 
   \frac{\abs{\int_{0}^{1}\br{f'(u+\theta w)-f'(u+\theta v)}\d\theta}^{q}}{ |w|^{p-q} |v|^{p-q} |w-v|^p} \d w \d v \d u \\
   & \le  C \int_0^{1}\iiint_{\R/\Z\times D } \frac{\abs{f'(u+\theta(w-v))-f'(u)}^{q}}{ |w|^{p-q} |v|^{p-q} |w-v|^p} \d w \d v \d u \d\theta \\
   &\refeq[\le]{intM<seminorm} C\norm f_{{\W}}^{q}.
   \end{split}
\end{equation}
Hence,
\begin{equation}
\begin{split} \label{eq:L1majorant}
&\iiint\limits_{\mathbb R / \mathbb Z \times D }\abs{g(u,v,w)}\d w\d v\d u \\
   &\le C\seminorm \g_{\W[(3p-q-2)/q,q]}^q \|h'\|_{L^\infty} + {}
  \seminorm {h}_{\W[(3p-q-2)/q,q]}^q \br{1+\lnorm[\infty]{h'}}
   \\ & \quad +   C  \left( \iiint\limits_{\mathbb R / \mathbb Z \times D} \frac{|\frac {\T w0} w - \frac {\T v0} v|^q}{|w|^{p-q} |v|^{p-q} |v-w|^p}\d w\d v\d u \right)^{1-\frac 1 q}
   \left( \iiint\limits_{\mathbb R / \mathbb Z \times D} \frac{|\frac {\T[h] w0} w - \frac {\T[h] v0} v|^q}{|w|^{p-q} |v|^{p-q} |v-w|^p} \d w\d v\d u\right)^{\frac 1 q}
  \\
 & \le C \br{\seminorm{\g}_{\W[(3p-2)/q-1,q]}^{q} + \seminorm{h}_{\W[(3p-2)/q-1,q]}^{q}}\br{1+\lnorm[\infty]{h'}}
 +C \seminorm \g_{\W[(3p-q-2)/q,q]}^{q-1}\seminorm {h}_{\W[(3p-q-2)/q,q]}^q.
\end{split}
\end{equation}
So $\delta L(\gamma_\tau; h)$ has a uniform $L^1$-majorant for $\tau$ sufficiently small.
Therefore,
by Lebegue's theorem of dominanted convergence,
we finally can use the fundamental theorem of calculus to write for $\tau$ small
\begin{align*}
 \frac {\E(\gamma+\tau h) - \E(\gamma)}{\tau} 
 &= 6 \iiint\limits_{\mathbb R / \mathbb Z \times D} \int_0^1 \delta L(\gamma_{s \tau};h)(u,v,w) \d s\d u \d v \d w \\
 & \xrightarrow{\tau \rightarrow 0} 6 \iiint\limits_{\mathbb R / \mathbb Z \times D} \delta L (\gamma; h)(u,v,w) \d u \d v \d w.
\end{align*}
Consequently, the first variation exists and has the form~\eqref{eq:diff}.

Using once more the symmetry of the integrand, we can bring this into 
the form~\eqref{eq:dE-intro} as follows.
%
%\begin{corollary}
% Let $\g\in\W\rzd$ be injective and parametrized by arc-length.
% Then, for any $h\in\W\rzd$, the first variation of~$\E$ at~$\g$ in direction~$h$
% can be give by%\vspace{-2ex}
% \begin{equation}\label{eq:dE}
% \begin{split}
%   &\delta\E(\g,h) \\
%   &= \iiint\limits_{(\mathbb R / \mathbb Z)^3} \Bigg\{
% 2q\frac{\abs{\T v0\wedge\T w0}^{q-2}}{\br{\abs{\T vw}\abs{\T v0}\abs{\T w0}}^p}
% \cdot{\sp{\T v0\wedge\T w0,\T v0\wedge\T[h] w0}} \\
%% &\qquad\qquad\quad{}-2q\frac{\abs{\T v0\wedge\T w0}^{q}}{\abs{\T vw}^{p}\abs{\T v0}^{q+2}\abs{\T w0}^{q}}\sp{\T v0,\T[h] v0}\nonumber\\
% &\qquad\qquad\qquad{}-3p\frac{\abs{\T v0\wedge\T w0}^{q}}{\br{\abs{\T vw}\abs{\T v0}\abs{\T w0}}^p}\cdot\frac{\sp{\T vw,\T[h] vw}}{\abs{\T vw}^{2}}\\
% &\qquad\qquad\qquad{}+3\frac{\abs{\T v0\wedge\T w0}^{q}}{\br{\abs{\T vw}\abs{\T v0}\abs{\T w0}}^p}\cdot\sp{\dg,\dh}
%\Bigg\}\d w\d v\d u.%\label{eq:dE}
% \end{split}
% \end{equation}
%\end{corollary}
%
%\begin{proof}
Due to the symmetry of the integrand we have
$L\circ P_{\s}=L$ for any permutation matrix $P_{\s}$, $\in\mathfrak S_{3}$.
%$$
%  L \circ g_{\sigma} = L
%$$
So we obtain
\begin{equation*}
 6\iiint_{\mathbb R/ Z \times D} \delta L(\gamma; h) 
 =  \sum_{\sigma \in\mathfrak S_{3} } \iiint_{P_{\s}(\mathbb R/ Z \times D)} \delta L(\gamma;h)
 = \iiint_{(\mathbb R / \mathbb Z) ^3} \delta L (\gamma;h).
\end{equation*}
The symmetry of $\RR$ now leads to the desired.

Furthermore, by~\eqref{eq:estimateintegrand},
the first variation defines a bounded linear operator on $\W$.
Hence $\E$ is G\^ateaux differentiable.
\end{proof}

In fact, we can even show that $\E$ is $C^1$,
though we will not use this fact in the rest of this article.

\begin{lemma}
 The functional $\E$ is $C^1$ on the subspace of all regular embedded
 $\W$-curves.
\end{lemma}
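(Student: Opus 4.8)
The plan is to upgrade the Gâteaux differentiability from Lemma~\ref{lem:diff} to continuous (Fréchet) differentiability by showing that the map $\gamma \mapsto \delta\E(\gamma;\,\cdot\,)$ is continuous from the space of regular embedded $\W$-curves into the dual space $(\W)^*$. First I would fix a regular embedded $\gamma \in \W$ and choose a $C^1$-neighborhood $U$ as in the proof of Lemma~\ref{lem:diff}, so that the bi-Lipschitz constants $M_1, M_2$ and the corresponding constant in~\eqref{eq:estimateintegrand} are uniform over $U$. For $\tilde\gamma \in U$ and $h \in \W$ with $\|h\|_{\W} \le 1$ one must estimate
\[
 \bigl| \delta\E(\tilde\gamma;h) - \delta\E(\gamma;h) \bigr|
 = \Bigl| 6\iiint_{\R/\Z\times D} \bigl( \delta L(\tilde\gamma;h) - \delta L(\gamma;h) \bigr)(u,v,w) \d w\d v\d u \Bigr|
\]
and show it tends to $0$ as $\|\tilde\gamma - \gamma\|_{\W} \to 0$, uniformly in $h$ on the unit ball.

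The key step is a pointwise Lipschitz-type bound on the integrand difference. Writing each of the seven terms $\delta L_j$ as a product of a "geometric'' factor (built from $\T v0$, $\T w0$, $\T vw$ and their wedge, involving powers $p$, $q$, $q-2$, $\ldots$) times an "$h$-linear'' factor, I would expand $\delta L(\tilde\gamma;h) - \delta L(\gamma;h)$ telescopically: replace $\tilde\gamma$ by $\gamma$ one factor at a time. Each difference of geometric factors is controlled, via the mean value theorem applied to the smooth functions $x\mapsto |x|^a$, $(x,y)\mapsto |x\wedge y|^b$ on the region where all quantities are bounded away from $0$ and $\infty$ (guaranteed by the bi-Lipschitz estimate and $U \subset C^1$), by a constant times $\|\tilde\gamma' - \gamma'\|_{L^\infty}$ times the same type of singular kernel that already appeared in~\eqref{eq:estimateintegrand} and~\eqref{eq:esttripleintegral}. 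The crucial point, exactly as in the Gâteaux computation, is to rewrite every elementary difference quotient $\tfrac{\T{} v0}v$, $\tfrac{\T{} w0}w$ in terms of $\int_0^1 \gamma'(u+\theta v)\d\theta$ so that the wedge of two such quotients carries a factor $\tfrac{\T{} w0}w - \tfrac{\T{} v0}v = \int_0^1(\gamma'(u+\theta w) - \gamma'(u+\theta v))\d\theta$, producing the extra $|v-w|$-type vanishing that makes the kernel integrable over $D$. After the telescoping one arrives at a bound
\[
 \bigl| \delta L(\tilde\gamma;h) - \delta L(\gamma;h) \bigr|(u,v,w)
 \le C\,\|\tilde\gamma' - \gamma'\|_{L^\infty}\,\Phi(u,v,w),
\]
where $\Phi$ is a finite sum of kernels each of the form $|v|^{-(p-q)}|w|^{-(p-q)}|v-w|^{-p}$ multiplied by a product of difference-quotient factors of $\gamma'$ and $h'$, totalling $q$-th power weight; by Hölder's inequality and~\eqref{eq:esttripleintegral} each integrates to something $\le C(1 + \seminorm{\gamma}_{\W}^{q} + \seminorm{h}_{\W}^{q})(1 + \|h'\|_{L^\infty})$, hence is bounded uniformly for $\|h\|_{\W}\le 1$.

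Integrating this estimate over $\R/\Z\times D$ then gives
\[
 \sup_{\|h\|_{\W}\le 1} \bigl| \delta\E(\tilde\gamma;h) - \delta\E(\gamma;h) \bigr|
 \le C\,\|\tilde\gamma - \gamma\|_{\W},
\]
so $\gamma\mapsto\delta\E(\gamma;\,\cdot\,)$ is (locally Lipschitz, in particular) continuous into $(\W)^*$; combined with Gâteaux differentiability this yields that $\E$ is $C^1$ on the open set of regular embedded $\W$-curves, and also upgrades Gâteaux to Fréchet differentiability. The main obstacle I anticipate is purely bookkeeping: organizing the telescopic expansion of the seven terms $\delta L_1,\dots,\delta L_7$ so that \emph{every} resulting summand still exhibits the cancellation $\tfrac{\T{} w0}w - \tfrac{\T{} v0}v$ (or an analogous $|v-w|$-gaining difference) needed to keep the kernel integrable on $D$ — in particular handling the term with exponent $q-2$ in $|\T v0\wedge\T w0|^{q-2}$, where one must be careful since $q-2$ may be negative for $q<2$ and the wedge can be small; here one uses~\eqref{eq:a-wedge-b}-type lower bounds together with the bi-Lipschitz estimate to see the geometric factor stays comparable to $(|v||w|)^{q-2}$ times a quantity bounded away from zero, so the singularity is genuinely no worse than already treated. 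No new analytic ingredient beyond what was used for Lemma~\ref{lem:diff} is required; it is the same estimates~\eqref{eq:intM<seminorm}, \eqref{eq:estimateintegrand}, \eqref{eq:esttripleintegral} applied to differences.
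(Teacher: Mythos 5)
Your strategy differs from the paper's: you aim at a direct, quantitative bound on $\delta\E(\tilde\gamma;h)-\delta\E(\gamma;h)$, whereas the paper argues by contradiction, using the \emph{compactness} of the embedding $\W\hookrightarrow C^1$ to get $h_k\to h_0$ in $C^1$ along a subsequence, then pointwise (hence in-measure) convergence of the integrands, and finally a Vitali-type argument combining uniform integrability of the majorant $g^{(1)}_k$ with a Young-inequality splitting; that route never needs a modulus of continuity. Your route would be stronger if it worked, but the central pointwise estimate $\abs{\delta L(\tilde\gamma;h)-\delta L(\gamma;h)}\le C\lnorm[\infty]{\tilde\gamma'-\gamma'}\,\Phi$ has a genuine gap.

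Concretely: the telescoping is harmless when the $\tilde\gamma$-versus-$\gamma$ difference lands on a factor that is bounded away from $0$ and $\infty$ (the denominators $\abs{\T w0}^{-p}$ etc., via the bi-Lipschitz estimate); there the mean value theorem does produce $\lnorm[\infty]{\tilde\gamma'-\gamma'}$ times the original kernel. But when the difference lands on a decay-supplying numerator factor $\frac{\T w0}{w}-\frac{\T v0}{v}$, pulling out $\lnorm[\infty]{\tilde\gamma'-\gamma'}$ leaves only $q-1$ powers of difference quotients against the kernel $\abs v^{-(p-q)}\abs w^{-(p-q)}\abs{v-w}^{-p}$, whose integral over $D$ behaves near the origin like $\int_{0}r^{1-(3p-2q)}\d r$ with $3p-2q>3$ in the range \eqref{eq:sub-critical}; neither H\"older's inequality nor the pointwise H\"older continuity of $\gamma'$ (which supplies only $(3p-2q-3)/q$ per factor, too little even with $q$ factors) can close this deficit, so for those summands your $\Phi$ is not integrable. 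The repair is to keep the differenced factor $\frac{\T[(\tilde\gamma-\gamma)]w0}{w}-\frac{\T[(\tilde\gamma-\gamma)]v0}{v}$ inside the integral as one H\"older slot and estimate it via \eqref{eq:esttripleintegral}, which yields $\seminorm{\tilde\gamma-\gamma}_{\W}$ in place of the sup norm. Separately, your treatment of $\abs{\T w0\wedge\T v0}^{q-2}$ is incorrect: the normalized wedge $\frac{\T w0}{w}\wedge\frac{\T v0}{v}$ tends to $\gamma'(u)\wedge\gamma'(u)=0$ as $v,w\to0$, so it is \emph{not} bounded away from zero and \eqref{eq:a-wedge-b} does not rescue the mean value theorem; one must instead difference the combined map $x\mapsto\abs x^{q-2}x$, which for $q<2$ (allowed in \eqref{eq:sub-critical}) is only $(q-1)$-H\"older near the origin. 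This still gives continuity of $\delta\E$, hence the $C^1$ statement, but it invalidates your claimed local Lipschitz bound.
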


\begin{proof}
We will prove by contradiction that $\delta \E$
is a continuous map from embedded regular $\W$-curves
to $\br\W^\ast$.
So let us assume that $\delta \E$% : \W \rightarrow (\W)^\ast
was not continuous in $\gamma_{0}$.
Consequently, there are some $\eps_{0}>0$ and sequences $\seqn\g,\seqn h\subset\W$,
$\g_k\to\g_{0}$ in $\W$, $\norm{h_{k}}_{\W}\le1$, with
\begin{equation}\label{eq:contrassumpt}
 |\delta \E(\g_k; h_k) - \E (\gamma;h_k)| \geq \varepsilon_0.
\end{equation}

As in the proof of Lemma~\ref{lem:diff}, we can exploit the embedding $\W \hookrightarrow C^1$ and the openness of the set of regular embedded curves in $C^1$,
to find an open neighborhood $U$ of $\g_{0}$ consisting only of embedded curves, such that
\begin{equation*}
 \inf_{\tilde \gamma \in U, u \in \mathbb R / \mathbb Z} |\tilde \gamma '(u)| =: M_1 >0 
\end{equation*}
and
\begin{equation*}
 \sup_{\tilde \gamma \in U} \|\tilde \gamma\|_{\W} +\sup_{\tilde \gamma \in U, u\not=v \in \mathbb R / \mathbb Z} \frac {|\tilde \gamma(u) - \tilde \gamma(v)|}{|u-v|} =: M_2 < \infty. 
\end{equation*}
After passing to a subsequence
we may assume $\seqn\g\subset U$ and $h_{k}\to h_{0}\in\W$ in $C^{1}$
due to the compactness of the embedding $\W \hookrightarrow C^1$
which then also gives
\begin{equation*}
 \delta L(\g_k;h_k) - \delta L(\g_0;h_0) \rightarrow 0
\end{equation*}
pointwise almost everywhere on $\R/\Z\times D$ and hence in measure, i.e., for all $\varepsilon >0$ we have
\begin{equation}\label{eq:lebto0}
 \lim_{k \rightarrow \infty} \mathcal L^3 \br{A_{\eps,k}}=0,
\end{equation}
where $\mathcal L^3$ denotes the Lebesgue measure
and
\begin{equation*}\delimitershortfall-1pt
 A_{\eps,k}:= \sett{(u,v,w) \in \mathbb R / \mathbb Z \times D}
 {\abs{\delta L(\g_k;h_k)(u,v,w)-\delta L(\g_0;h_0)(u,v,w)} \geq \varepsilon}. 
\end{equation*}

For all $\varepsilon >0$ we can deduce from \eqref{eq:estimateintegrand}
using Young's inequality that there is a $C_{\eps}>0$ with
\begin{equation}
\begin{split}
 \left|\delta L(\gamma_{k};h_{k})(u,v,w) \right|
 &\leq   C_\varepsilon  \frac{|\frac {\T[\g_k] w0} w - \frac {\T[\g_k] v0} v|^q }{|w|^{p-q} |v|^{p-q} |v-w|^p} 
  + \varepsilon \frac { |\frac {\T[h_{k}] w0} w - \frac {\T[h_{k}] v0} v|^q}{|w|^{p-q} |v|^{p-q} |v-w|^p} 
  \\ &=: C_{\eps}g^{(1)}_k(u,v,w) + \varepsilon g^{(2)}_k(u,v,w)
 \end{split} 
\end{equation}
for all $k\in\N\cup\set0$.
Since the first summand converges in $L^1$ as $k\to\infty$, it is uniformly integrable,
so % i.e., for $\eta :=\frac\eps{C_{\eps}}>0$
there is a $\delta_\eps >0$ such that
$\mathcal L^{3} (E) \leq \delta_\eps$ for any measurable subset $E \subset \mathbb R / \mathbb Z \times D$ implies
\begin{equation}
 \iiint_E g_k^{(1)}  \leq \frac\eps{C_{\eps}} \qquad\text{for all }k\in\N\cup\set0.
\end{equation}
Furthermore we infer from \eqref{eq:intM<seminorm}
\begin{equation*}
 \iiint_{\mathbb R / \mathbb Z \times D} g^{(2)}_k \d u\d v\d w \leq C \norm {h_{k}}_{\W} \leq C.
\end{equation*}
By~\eqref{eq:lebto0} there is some $k_0=k_{0}(\eps) \in \mathbb N$ with
\begin{equation*}
 \mathcal L^3 \br{A_{\eps,k}} \leq \delta_\eps
 \qquad\text{for all } k \geq k_0
\end{equation*}
which yields for
$k \geq k_0$ and
$B_{\eps,k} := \mathbb R / \mathbb Z \times D \setminus A_{\eps,k}$
\begin{align*}
 &\tfrac16|\delta \E(\g_k; h_k) - \delta\E (\gamma_{0};h_{0})| \\
 &\leq \iiint_{A_{\eps,k}} |\delta L(\g_k;h_k)-\delta L(\g_0;h_0) |
  + \iiint_{B_{\eps,k}} |\delta L(\g_k;h_k)-\delta L(\g_0;h_0) | \\
 & \leq \iiint_{A_{\eps,k}} \left(C_{\eps} g^{(1)}_k +  \varepsilon g^{(2)}_k \right)
 + \iiint_{A_{\eps,k}} \left(C_{\eps} g_{0}^{(1)} +  \varepsilon g_{0}^{(2)} \right)
 + \mathcal L^3(B_{\eps,k}) \varepsilon \\
 & %\leq 2C_{\eps}\eta+2C\eps+\mathcal L^3(B) \varepsilon
  \leq C\eps.
\end{align*}
Hence,
\begin{align*}
 \eps_{0}&\refeq[\le]{contrassumpt}\abs{\delta \E(\g_k; h_k) - \E (\gamma;h_k)} \\
 &\le \abs{\delta \E(\g_k; h_k) - \E (\gamma;h)} + \abs{\delta \E(\g; h_k) - \E (\gamma;h)} \\
 &\le C\eps + C\norm{h_{k}-h}_{\W}
\end{align*}
for all $\varepsilon >0$ and $k\ge k_{0}(\eps)$ which leads to a contradiction.
\end{proof}

\section{\secD}

For the rest of this section, let us restrict to the case that $ \g$ is parametrized by arc-length.
Then we get using Lemma \ref{lem:diff} 
\begin{align*}
 \delta \E (\gamma;h) := 6q \tilde Q^{p,q} (\gamma,h) + 6 R^{p,q}_1 (\gamma h)
\end{align*}
where
\begin{multline*}
 \tilde Q^{p,q}(\gamma,h):= \iiint_{\mathbb R / \mathbb Z \times D} \left|\T w0 \wedge \T v0\right|^{q-2}  \frac {\sp{\T w0 \wedge \T v0 ,\T[h] w0 \wedge \T v0 + \T w0 \wedge \T[h] v0 }}{|\T w0|^{p}|\T v0|^p |\T vw|^p}   
\d w \d v \d u
\end{multline*}
and \begin{align*}
  R_1 (\gamma,h) := \iiint_{\mathbb R / \mathbb Z \times D} \Bigg(
  &- p \frac {\left|\T w0 \wedge \T v0 \right|^q}{|\T w0|^{p+2}|\T v0|^p |\T vw|^p}  \cdot  \sp{\T w0, \T[h] w0} \\
  &- p \frac {\left|\T w0 \wedge \T v0 \right|^q}{|\T w0|^{p}|\T v0|^{p+2} |\T vw|^p}  \cdot  \sp{\T v0, \T[h] v0} \\
  &- p \frac {\left|\T w0 \wedge \T v0 \right|^q}{|\T w0|^{p}|\T v0|^p |\T vw|^{p+2}}  \cdot  \sp{\T vw, \T[h] vw} \\
  &+ R^{p,q}(u, u+w, u+v) \sp{\dg (u), \dh(u)} \\
  &+ R^{p,q}(u, u+w, u+v) \sp{\dg (u+v),\dh(u+v)} \\
  &+ R^{p,q}(u, u+w, u+v) \sp{\dg (u+w),\dh(u+w)}
 \Bigg)\d w \d v \d u.
\end{align*}

For $q=2$ will see that $\tilde Q^p := \tilde Q^{p,2}$ contains the highest order term of the Euler-Lagrange operator.
To see this we use
$$
 \left\langle a\wedge b , c\wedge d  \right\rangle = \det \begin{pmatrix}\sp{a,c} & \sp{a,d} \\
                                                           \sp{b,c} & \sp{b,d}
                                                          \end{pmatrix} 
$$
to get 
\begin{equation}\label{eq:abac}
 \left\langle a \wedge b, a \wedge c\right\rangle = \sp{a,a}\sp{c,b} - \sp{a,c}\sp{a,b} 
 = |a|^2\left\langle P^\bot_{a} b, c \right\rangle.
\end{equation}
Hence,
\begin{align*}
\tilde Q^{p}(\gamma;h) &= \iiint\limits_{\mathbb R / \mathbb Z \times D } \Bigg(
 \frac{\sp{P^\bot_{\T v0}\T w0,\T[h] w0}}{\abs{\T vw}^p\abs{\T v0}^{p-2}\abs{\T w0}^p}
+
 \frac{\sp{P^\bot_{\T w0}\T v0,\T[h] v0}}{\abs{\T wv}^p\abs{\T v0}^p\abs{\T w0}^{p-2}}
 \Bigg)\d w \d v \d u
\\
&=    \iiint\limits_{\mathbb R / \mathbb Z \times D } \Bigg( \frac{\sp{P^\bot_{\T v0}\T w0,\T[h] w0}}{\abs{v-w}^p\abs{v}^{p-2}\abs{w}^p}
+
 \frac{\sp{P^\bot_{\T w0}\T v0,\T[h] v0}}{\abs{v-w}^p\abs{v}^p\abs{w}^{p-2}}
 \Bigg)\d w \d v \d u + R_2(\gamma;h) \\
& =   \iiint\limits_{\mathbb R / \mathbb Z \times D } \Bigg( \frac{\sp{P^\bot_{\T v0} \br{\frac {\T w0}{w}
- \frac {\T v0}{v}},\frac{\T[h] w0}{w}}}{\abs{v-w}^p\abs{v}^{p-2}\abs{w}^{p-2}}
-
 \frac{\sp{P^\bot_{\T w0} \br{\frac {\T w0}{w}
- \frac {\T v0}{v}},\frac{\T[h] v0}{v}}}{\abs{v-w}^p\abs{v}^{p-2}\abs{w}^{p-2}}
 \Bigg)\d w \d v \d u + R_2(\gamma;h) \\
%  &=   \iiint\limits_{\mathbb R / \mathbb Z \times D } \Bigg( \frac{\sp{P^\bot_{\T v0}\T w0,\T[h] w0}}{\abs{v-w}^p\abs{v}^{p-2}\abs{w}^p}
% +
%  \frac{\sp{P^\bot_{\T w0}\T v0,\T[h] v0}}{\abs{v-w}^p\abs{v}^p\abs{w}^{p-2}}
%  \Bigg)\d w \d v \d u + R_2(\gamma;h) \\
& =   \iiint\limits_{\mathbb R / \mathbb Z \times D } \Bigg( \frac{\sp{\frac {\T w0}{w}
- \frac {\T v0}{v},\frac{\T[h] w0}{w}-\frac{\T[h] v0}{v}}}{\abs{v-w}^p\abs{v}^{p-2}\abs{w}^{p-2}}
 \Bigg)\d w \d v \d u + R_2(\gamma;h) - R_3(\gamma;h) \\
\end{align*}
where
\begin{align*}
 R_2 (\gamma;h) := \iiint\limits_{\mathbb R / \mathbb Z \times D } \Bigg(
 &\sp{P^\bot_{\T v0}\T w0,\T[h] w0}  \left( \frac 1{\abs{\T vw}^p\abs{\T v0}^{p-2}\abs{\T w0}^p} 
 - \frac 1{\abs{v-w}^p\abs{v}^{p-2}\abs{w}^p}\right)
 \\ &+
 \sp{P^\bot_{\T w0}\T v0,\T[h] v0} \left( \frac 1 {\abs{\T wv}^p\abs{\T v0}^p\abs{\T w0}^{p-2}}
  - \frac 1 {\abs{v-w}^p\abs{v}^p\abs{w}^{p-2}}\right)
 \Bigg)\d w \d v \d u
\end{align*}
and
\begin{align*}
 R_3 (\gamma;h) :=   \iiint\limits_{\mathbb R / \mathbb Z \times D } \Bigg( \frac{\sp{P^{\top}_{\T v0}
\left( \frac{ \T w0 } w - \frac{ \T v0 } v \right),\frac{\T[h] w0}w}}{\abs{v-w}^p\abs{v}^{p-2}\abs{w}^{p-2}}
+
 \frac{\sp{P^{\top}_{\T w0}\left(\frac {\T v0}{v} -\frac {\T w0}{w}\right) ,\frac{\T[h] v0}v}}{\abs{v-w}^p\abs{v}^{p-2}\abs{w}^{p-2}}
 \Bigg)\d w \d v \d u.
\end{align*}
Using
\begin{align*}
 Q^{(p)}(\gamma;h) :=   \iiint\limits_{\mathbb R / \mathbb Z \times D } \Bigg( \frac{\sp{\frac {\T w0}{w}
- \frac {\T v0}{v},\frac{\T[h] w0}{w}-\frac{\T[h] v0}{v}}}{\abs{v-w}^p\abs{v}^{p-2}\abs{w}^{p-2}}
 \Bigg)\d w \d v \d u
\end{align*}
we hence get
\begin{equation}\label{eq:Re}
 \delta M^{p,2} = 12 \left( Q^{(p)} + \tfrac12R_1 +R_2 + R_3  \right).
\end{equation}

\begin{proposition}\label{prop:Q}
 The functional $\Q$ is bilinear on $\br{\W[3p/2-2,2]}^{2}$, more precisely
 \[ \Q(f,g) = \sum_{k\in\Z} \rho_k\sp{\hat f_k,\hat g_k}_{\C^d} \qquad \text{where } \rho_k = c\abs k^{3p-4} + o\br{\abs k^{3p-4}} \text{ as }\abs k\nearrow\infty \]
 and $c>0$.
 Here $\hat\cdot_k$ denotes the $k$-th Fourier coefficient
 \[
  \hat f_k := \int_{0}^1 f(x) e^{-2\pi ikx } \d x.
 \]
\end{proposition}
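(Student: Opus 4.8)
The plan is to diagonalize $\Q$ in the Fourier basis, exploiting that it is a translation-invariant bilinear form in $(f,g)$. Writing $f(x)=\sum_{k\in\Z}\hat f_ke^{2\pi ikx}$, $g(x)=\sum_{k\in\Z}\hat g_ke^{2\pi ikx}$, and setting $m_k(t):=\frac{e^{2\pi ikt}-1}{t}$, the first slot of the integrand of $\Q$ reads $\sum_k\hat f_ke^{2\pi iku}\br{m_k(w)-m_k(v)}$ and the second $\sum_l\hat g_le^{2\pi ilu}\br{m_l(w)-m_l(v)}$. Performing the integration in $u$ over $\R/\Z$ and using $\int_0^1e^{2\pi i(k-l)u}\d u=\delta_{kl}$ annihilates all off-diagonal contributions, so that
\[
 \Q(f,g)=\sum_{k\in\Z}\rho_k\,\sp{\hat f_k,\hat g_k}_{\C^d},
 \qquad
 \rho_k:=\iint_{D}\frac{\abs{m_k(w)-m_k(v)}^2}{\abs{v-w}^p\abs v^{p-2}\abs w^{p-2}}\,\d v\,\d w.
\]
To make this rigorous I would first check, along the lines of~\eqref{eq:esttripleintegral} with $q=2$, that the triple integral defining $\Q$ converges absolutely on $\W[3p/2-2,2]\times\W[3p/2-2,2]$, carry out the above computation for trigonometric polynomials (where interchanging the finite sum with the integral is immediate), and then pass to the limit using density together with the bound $\rho_k\le C\abs k^{3p-4}$ from the next step and Cauchy--Schwarz in the weighted $\ell^2$. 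Bilinearity of $\Q$ on $\br{\W[3p/2-2,2]}^{2}$ is then immediate.

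Next I would compute the asymptotics of $\rho_k$. Since $m_{-k}=\overline{m_k}$ on $\R$ we have $\rho_{-k}=\rho_k$, so assume $k\ge1$. The substitution $v\mapsto v/k$, $w\mapsto w/k$, together with $m_k(t/k)=k\,m_1(t)$, gives
\[
 \rho_k=k^{3p-4}\iint_{kD}\frac{\abs{m_1(w)-m_1(v)}^2}{\abs{v-w}^p\abs v^{p-2}\abs w^{p-2}}\,\d v\,\d w.
\]
A direct check shows $tD\subseteq D$ for $t\in(0,1]$, so the dilated domains $kD$ increase and exhaust, up to a null set, the open quadrant $Q:=(-\infty,0)\times(0,\infty)$. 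Hence, by monotone convergence,
\[
 k^{-(3p-4)}\rho_k\;\nearrow\;c:=\iint_{Q}\frac{\abs{m_1(w)-m_1(v)}^2}{\abs{v-w}^p\abs v^{p-2}\abs w^{p-2}}\,\d v\,\d w,
\]
which is exactly the claimed expansion $\rho_k=c\abs k^{3p-4}+o\br{\abs k^{3p-4}}$, \emph{provided} $0<c<\infty$.

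It thus remains to show $0<c<\infty$, and this is where the restriction $p\in(\tfrac73,\tfrac83)$ enters. For finiteness I would split $Q$ into three parts. Near the origin, the Taylor expansion $m_1(t)=2\pi i+O(t)$ yields $\abs{m_1(w)-m_1(v)}\le C\br{\abs v+\abs w}$, and $\abs{v-w}\simeq\abs v+\abs w$ on $Q$ because $v,w$ have opposite signs; in polar coordinates $v=-r\cos\varphi$, $w=r\sin\varphi$, $\varphi\in(0,\tfrac\pi2)$, the radial part of the integrand is of order $r^{7-3p}\,\d r$, integrable at $r=0$ precisely because $p<\tfrac83$, while the angular part is of order $(\cos\varphi)^{-(p-2)}(\sin\varphi)^{-(p-2)}$ near the endpoints, integrable because $p-2<1$. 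On the bulk part, where $(v,w)$ stays bounded and bounded away from the origin, $m_1$ is bounded and the integrand is integrable (its only singularities lie on the coordinate axes and are harmless since $p-2<1$). At infinity, $\abs{m_1}\le2\pi$ and $\abs{v-w}\ge\tfrac12\br{\abs v+\abs w}$ give the bound $\br{\abs v+\abs w}^{-p}\abs v^{-(p-2)}\abs w^{-(p-2)}$, whose polar radial part is of order $\rho^{5-3p}\,\d\rho$, integrable at infinity since $p>2$. Hence $c<\infty$.

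Positivity is then easy: the integrand in the definition of $c$ is strictly positive off the null set $\{m_1(v)=m_1(w)\}$, and $Q$ has infinite measure, so $c>0$; this also pins down the constant explicitly. I expect the finiteness of $c$ near the origin to be the only real obstacle: one must see that the first-order vanishing of $m_1(w)-m_1(v)$ as $(v,w)\to0$ compensates the combined singularity $\abs{v-w}^{-p}\abs v^{-(p-2)}\abs w^{-(p-2)}$ up to the borderline $p=\tfrac83$, and then exploit the monotonicity $kD\subseteq(k+1)D$ to supply the uniform domination that legitimizes passing to the limit.
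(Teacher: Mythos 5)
Your argument is essentially the paper's: the same Fourier diagonalization yields the same symbol $\rho_k$, the same scaling substitution produces the factor $\abs{k}^{3p-4}$ times the integral over the dilated domain $kD$, and the limiting constant is the same quadrant integral. The only variation is in one technical sub-estimate --- where the paper proves finiteness of that quadrant integral via Jensen's inequality and a reduction to $\int_0^\infty (1-\cos 2\pi\tilde w)\,\abs{\tilde w}^{-(3p-5)}\,\mathrm{d}\tilde w<\infty$, you use direct polar-coordinate estimates based on $m_1(t)=2\pi i+O(t)$ --- and your observation that $tD\subseteq D$ for $t\in(0,1]$ merely makes explicit the monotone-convergence step that the paper leaves implicit.
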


\begin{proof}
   Testing with the basis ${\rm e}_l \cdot e^{2\pi i k x}$ of $L^2$, $l=1,2,3$, $k\in \mathbb Z$, where ${\rm e}_1, {\rm e}_2, {\rm e}_3$ is the standard basis of $\mathbb R ^3$,
   we get
\begin{equation*}
      \Q(f,g) =
       \sum_{k\in\Z} \sp{\hat f_k,\hat g_k}_{\C^d} \rho_k
 \end{equation*}
 where
 \begin{equation*} 
       \rho_k :=  
       \iint\limits_{D} 
       \frac{ \abs{ \frac {e^{2\pi i  kw} -1} {w} - \frac {e^{2\pi i k v} -1} {v} }^2}{|v|^{p-2}|w|^{p-2}|v-w|^{p}} \d w\d v.
  \end{equation*}
 A simple substitution leads to 
 \begin{align*}
  \rho_k = \abs k^{3p - 4}
  \iint\limits_{D_k} 
       \frac{ \abs{ \frac {e^{2\pi i  w} -1} {w} - \frac {e^{2\pi i  v} -1} {v}}^2}{|v|^{p-2}|w|^{p-2}|v-w|^{p}} \d w\d v 
 \end{align*}
 where $D_k := k \cdot D.$ We use the fundamental thorem of calculus and Jensen's inequality to get
 \begin{align*}
  \frac{ \abs{ \frac {e^{2\pi i  w} -1} {w} - \frac {e^{2\pi i  v} -1} {v}}^2}{|v|^{p-2}|w|^{p-2}|v-w|^{p}}
  &\leq 4\pi^{2}\int_0^1 \frac {|e^{2\pi i \theta w} - e^{2\pi i \theta v} |^2} {|w|^{p-2} |v|^{p-2} |v-w|^{p}} \d \theta\\
  &= 4\pi^{2} \int_0^1 \frac {|e^{2\pi i \theta (v-w)} - 1 |^2} {|w|^{p-2} |v|^{p-2} |v-w|^{p}} \d \theta.
 \end{align*}
 Hence, 
 \begin{align*}
  \iint\limits_{v<0, w>0} \frac{ \abs{ \frac {e^{2\pi i k w} -1} {w} - \frac {e^{2\pi i k v} -1} {v}}^2}{|v|^{p-2}|w|^{p-2}|v-w|^{p}}
  &\leq C\iint\limits_{v<0, w>0} \int_0^1 \frac {|e^{2\pi i \theta (v-w)} - 1 |^2} {|w|^{p-2} |v|^{p-2} |v-w|^{p}} \d \theta \d w \d v
  \\ &= C\left( \int_0^1  {\theta^{3p-6}} \d \theta \right) \iint\limits_{v<0, w>0}  \frac {|e^{2\pi i  (v-w)} - 1 |^2} {|w|^{p-2} |v|^{p-2} |v-w|^{p}}  \d w \d v
  \\ &\leq C \iint\limits_{v<0, w>0}  \frac {|e^{2\pi i  (v-w)} - 1 |^2} {|w|^{p-2} |v|^{p-2} |v-w|^{p}}  \d w \d v
  \\ & = C \int_0^\infty \int_{0}^{\tilde w} \frac {|e^{-2\pi i \tilde w} - 1 |^2} {|\tilde w -v|^{p-2} |v|^{p-2} |\tilde w|^{p}}  \d v \d \tilde w
  \\ & = C  \int_0^\infty \int_0^1  \frac {|e^{-2\pi i \tilde w} - 1 |^2} {|1 -t|^{p-2} |t|^{p-2} |\tilde w|^{3p-5}}  \d t \d \tilde w
  \\ & \le C \int_0^\infty \frac {1-\cos2\pi\tilde w} {|\tilde w|^{3p-5}} \d \tilde w < \infty.
 \end{align*}
 Thus, we have shown that
 \begin{equation*}
  \frac {\rho_k} {|k|^{3k-4}} \xrightarrow{|k| \rightarrow \infty} \iint_{v<0, w>0} \frac{ \abs{ \frac {e^{2\pi i k w} -1} {w} - \frac {e^{2\pi i k v} -1} {v}}^2}{|v|^{p-2}|w|^{p-2}|v-w|^{p}}\d w\d v \in (0, \infty).
 \end{equation*}
\end{proof}

In the following statement, we use the symbol $\otimes$ to
denote any kind of product structure, such as cross product, dot product,
scalar or matrix multiplication.

\begin{lemma}
 The term $R^{(p)} := \tfrac12R_1 + R_2 + R_3$ is a finite sum of terms of the form
 $$
  \iiint\limits_{\mathbb R / \mathbb Z \times D} \idotsint\limits_{[0,1]^K} g^p(u,v,w;s_{1},\dots,s_{K-2}) \otimes h'(u+s_{K-1}v + s_K w) 
  \d\theta_{1}\cdots\d\theta_{K}\d v \d w \d u
 $$
 where $\G : (0,\infty)^3 \rightarrow \mathbb R$ is an analytic funtion,
 $s_{j}\in\set{0,\theta_{j}}$ for $j=1,\dots,K$,
 \begin{multline*}
  g^p(u,v,w;s_{1},\dots,s_{K-2})=%\iiint\limits_{\mathbb R / \mathbb Z \times D} \int_{[0,1]^K}
   \G \left(\frac{|\T 0w|}{|w|}, \frac{|\T 0v|}{|v|}, \frac{|\T vw|}{|v-w|}\right) 
  \Gamma(u,v,w,s_{1},s_{2})\cdot{}\\
  {}\cdot\left( \bigotimes_{i=3}^{K_1} \dg(u+ s_i v) \right) \otimes \left(\bigotimes_{j=K_1}^{K_2} \dg(u + s_i w) \right) \otimes \left(\bigotimes_{j=K_2}^{K-2} \dg(u+v + s_i (w-v)) \right),
%  \\ \d s_1 \cdots \d s_K \d w \d v,
 \end{multline*}
 and
 $\Gamma(u,v,w,s_{1},s_{2})$ is a term of one of the four types
 \begin{align*}
  &\frac{\br{\gamma'(u+s_1 w) - \gamma'(u+s_1 v)}\otimes\br{\gamma'(u+s_2 w) - \gamma'(u+s_2 v)}}{|v|^{p-2} |w|^{p-2} |v-w|^{p}}, \\
  &\frac{|\gamma'(u+s_1 w) - \gamma'(u+s_2 w)|^2}{|v|^{p-2} |w|^{p-2} |v-w|^{p}}, \\ 
  & \frac{|\gamma'(u+s_1 v) - \gamma'(u+s_2 v)|^2}{|v|^{p-2} |w|^{p-2} |v-w|^{p}}, \\ 
  & \frac{|\gamma'(u+v + s_1 (w-v)) - \gamma'(u+v+s_2 (w-v))|^2}{|v|^{p-2} |w|^{p-2} |v-w|^{p}}.
 \end{align*}
\end{lemma}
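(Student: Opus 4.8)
The plan is to expand each of the three terms $R_{1}$, $R_{2}$, $R_{3}$ summand by summand with a fixed small toolkit of elementary identities, and to verify that every term produced is of the asserted shape. We work under the standing assumptions of this section, in particular $q=2$ (so that $\abs{\T 0w\wedge\T 0v}^{2}=\abs{\T 0w}^{2}\abs{\T 0v}^{2}-\sp{\T 0w,\T 0v}^{2}$ is a \emph{polynomial} in the entries of the difference quotients), $\gamma$ parametrized by arc-length (so the outer factors $\abs{\dg(u)}\abs{\dg(u+v)}\abs{\dg(u+w)}$ equal~$1$), and the bi-Lipschitz estimate of Proposition~\ref{prop:bi-Lipschitz}, which keeps the three normalized lengths
\[
 \frac{\abs{\T 0w}}{\abs w},\qquad \frac{\abs{\T 0v}}{\abs v},\qquad \frac{\abs{\T vw}}{\abs{v-w}}
\]
in a fixed compact subinterval of $(0,\infty)$ for all $(u,v,w)\in\R/\Z\times D$. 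The toolkit is: (a)~the fundamental theorem of calculus $\T ab=(a-b)\int_{0}^{1}\dg(u+b+\theta(a-b))\d\theta$ and its analogue for $h$, which turns every difference $\T{\cdot}{\cdot}$ into a $\theta$-integral of $\dg$ (respectively produces the \emph{single} factor $h'$) while extracting a power of $\abs{a-b}$; (b)~writing a product of norms $\abs{\T 0w}\abs{\T 0v}\abs{\T vw}$ etc.\ as the ``flat'' product $\abs w^{\bullet}\abs v^{\bullet}\abs{v-w}^{\bullet}$ times a positive \emph{real-analytic} function of the three normalized lengths (analyticity uses that each normalized length is $\abs{\,\cdot\,}=\sqrt{\sp{\cdot,\cdot}}$ of a vector bounded away from $0$); (c)~the projection identities $P^{\bot}_{a}b=P^{\bot}_{a}(b-\lambda a)$, \eqref{eq:abac}, and $P^{\top}_{a}=\id-P^{\bot}_{a}$; (d)~$\abs a^{2}-\abs b^{2}=\sp{a-b,a+b}$ together with $1-\sp{e,f}=\tfrac12\abs{e-f}^{2}$ for unit vectors; (e)~$\tfrac1A-\tfrac1B=\tfrac{B-A}{AB}$; and (f)~the chaining $\dg(x)-\dg(z)=\br{\dg(x)-\dg(y)}+\br{\dg(y)-\dg(z)}$, used to rewrite any difference of $\dg$'s in terms of the four canonical endpoint pairs $\dg(u+sw)-\dg(u+sv)$, $\dg(u+sw)-\dg(u+s'w)$, $\dg(u+sv)-\dg(u+s'v)$, $\dg(u+v+s(w-v))-\dg(u+v+s'(w-v))$ appearing in the statement, at the cost of splitting one term into finitely many.

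I would start with $R_{1}$, which is the most transparent. For each of its six summands one extracts the factor $h'$ from $\T[h]{a}{b}$ via~(a); rewrites the wedge factor using (c) as $\tfrac{\T 0w}w\wedge\tfrac{\T 0v}v=\br{\tfrac{\T 0w}w-\tfrac{\T 0v}v}\wedge\tfrac{\T 0v}v$ and then~(a), producing the difference-quotient factor $\tfrac{\T 0w}w-\tfrac{\T 0v}v=\int_{0}^{1}\br{\dg(u+\theta w)-\dg(u+\theta v)}\d\theta$ (and, after expanding $\abs{\,\cdot\wedge\cdot\,}^{2}$ via $\sp{a\wedge b,c\wedge d}=\sp{a,c}\sp{b,d}-\sp{a,d}\sp{b,c}$, \emph{two} such factors, i.e.\ a $\Gamma$-kernel of the first type); applies~(b) to the remaining norm products; and finally counts powers of $\abs v,\abs w,\abs{v-w}$. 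The exponents $p+2$ occurring in $R_{1}$ are precisely compensated by the factors $\abs w^{2}$ (from $\abs{\T 0w\wedge\T 0v}^{2}=\abs w^{2}\abs v^{2}\abs{\tfrac{\T 0w}w\wedge\tfrac{\T 0v}v}^{2}$) and $w^{2}$ (from $\sp{\T 0w,\T[h] 0w}=w^{2}\sp{\tfrac{\T 0w}w,\tfrac{\T[h] 0w}w}$), so that the flat part of each resulting term collapses to exactly $\abs v^{2-p}\abs w^{2-p}\abs{v-w}^{-p}$; the leftover normalized lengths are bounded and real-analytic and go into $\G$, the remaining plain $\dg(u+s_{i}v),\dg(u+s_{i}w)$ go into the $\dg$-products, and the single $h'$ is as required.

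For $R_{2}$ and $R_{3}$ the mechanism is identical; the only new point is how the \emph{second} $\dg$-difference in the $\Gamma$-factor is manufactured. In $R_{2}$ one first extracts from $P^{\bot}_{\T 0v}\T 0w=w\,P^{\bot}_{\T 0v}\br{\tfrac{\T 0w}w-\tfrac{\T 0v}v}$, expanded via $P^{\bot}_{a}=\id-\tfrac{a\otimes a}{\abs a^{2}}$ (the denominator $\abs{\T 0v}^{2}$ contributing a $v^{2}$ that cancels the $v^{2}$ from the numerator and leaves a normalized length), a first $\dg$-difference; the second comes from $\tfrac{1}{\abs{\T vw}^{p}\abs{\T 0v}^{p-2}\abs{\T 0w}^{p}}-\tfrac{1}{\abs{v-w}^{p}\abs v^{p-2}\abs w^{p}}$, which by (b) and (e) equals the flat kernel times $\Phi-1$ with $\Phi$ real-analytic in the three normalized lengths and $\Phi(1,1,1)=1$; successively splitting off one normalized length at a time and using, e.g., $a^{-p}-b^{-p}=(\text{analytic})\cdot\tfrac{a^{2}-b^{2}}{a+b}$ with $a^{2}-b^{2}=\sp{\int(\dg(u+\theta w)-\dg(u+\theta v))\d\theta,\;\int(\dg(u+\theta w)+\dg(u+\theta v))\d\theta}$ by~(d) exhibits $\Phi-1$ as a bounded analytic factor times one of the canonical $\dg$-differences. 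In $R_{3}$ one argues the same way with $P^{\top}_{a}=\tfrac{a\otimes a}{\abs a^{2}}$ in place of $P^{\bot}_{a}$, using in addition~(d) to convert a contraction $\sp{\dg(u+sv),\dg(u+sw)-\dg(u+sv)}=-\tfrac12\abs{\dg(u+sw)-\dg(u+sv)}^{2}$ (plus a cross term handled by~(f)) into a $\Gamma$-factor of type two, three, or four. In all cases the outer $\abs{\dg}$-factors equal~$1$, so no further denominators appear, and the remaining bounded real-analytic factors are collected into~$\G$.

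The main obstacle is the combinatorial bookkeeping rather than anything conceptual. One must arrange the expansion so that in \emph{every} resulting summand (i)~exactly the kernel $\abs v^{2-p}\abs w^{2-p}\abs{v-w}^{-p}$ is isolated inside the $\Gamma$-factor while all leftover powers of $\abs v,\abs w,\abs{v-w}$ cancel; (ii)~at most two $\dg$-differences survive, and, via finitely many applications of the chaining~(f), both can be forced into one of the four listed endpoint patterns; and (iii)~everything else — the normalized lengths, the contractions $\sp{\dg,\dg}$, the bounded coefficients produced by~(d) and by the stepwise expansion of $\Phi$ — assembles into a single real-analytic $\G:(0,\infty)^{3}\to\R$ of the three normalized lengths, times plain factors $\dg(u+s_{i}v)$, $\dg(u+s_{i}w)$, $\dg(u+v+s_{i}(w-v))$, times one $h'(u+s_{K-1}v+s_{K}w)$. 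Once the toolkit and this accounting are set up there is nothing deep left; the substantive work has already gone into the decomposition $\delta M^{p,2}=12\br{Q^{(p)}+\tfrac12R_{1}+R_{2}+R_{3}}$ of~\eqref{eq:Re} and into Proposition~\ref{prop:Q}, and the present lemma merely records the (routine but lengthy) normal form of the remaining lower-order part, which the subsequent estimates will exploit.
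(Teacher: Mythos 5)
Your overall strategy coincides with the paper's: flatten every denominator into the product $\abs v^{p-2}\abs w^{p-2}\abs{v-w}^{p}$ times an analytic function of the three normalized chord lengths, use the fundamental theorem of calculus to produce difference quotients and the single factor $h'$, and telescope what remains. Your treatment of $R_{1}$ is exactly the paper's argument, and your toolkit contains all the identities actually used.

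The $R_{2}$ step, however, does not close as written. The paper does \emph{not} extract a difference from the projection factor: it converts $\sp{P^{\bot}_{\T v0}\T w0,\T[h]w0}$ via~\eqref{eq:abac} into $\tfrac{v^{2}w^{2}}{\abs{\T v0}^{2}}$ times a fourfold integral of \emph{plain} factors $\dg(u+s_{1}v)\otimes\dg(u+s_{2}w)\otimes\dg(u+s_{3}v)\otimes h'(u+s_{4}w)$, and lets the \emph{entire} $\Gamma$-factor come from the kernel difference, telescoped one normalized length at a time using
\[ \frac1{\abs{\T w0}^{\alpha}}-\frac1{\abs w^{\alpha}}
 = \G[\alpha]\br{\frac{\abs{\T w0}}{\abs w}}\iint_{[0,1]^{2}}\frac{\abs{\dg(u+s_{1}w)-\dg(u+s_{2}w)}^{2}}{\abs w^{\alpha}}\d s_{1}\d s_{2}, \]
i.e.\ each normalized length is compared to $1$ through the unit-vector identity, which is precisely what manufactures the squared same-direction differences of types 2--4. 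Your version takes one difference from $P^{\bot}_{\T v0}\T w0=w\,P^{\bot}_{\T v0}\br{\tfrac{\T w0}{w}-\tfrac{\T v0}{v}}$ and a second from the kernel via $\abs A^{2}-\abs B^{2}=\sp{A-B,A+B}$, which compares two normalized lengths to \emph{each other}; but the telescoping must eventually compare some normalized length to $1$, and that final step unavoidably yields a squared difference, so your summands carry three difference factors over the standard kernel --- not one of the four listed types. This is repairable (re-expand the superfluous projection difference into two plain $\dg$-factors, at which point the extraction was pointless), but as described the bookkeeping does not land on the asserted normal form, which is the whole content of the lemma. A related caveat: the chaining step (f) cannot be applied inside a squared difference without generating cross terms with mixed endpoint patterns outside the list; with the paper's telescoping it is never needed.
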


\begin{proof}
 Using
 \begin{align*}
  \frac{\left|\T w0 \wedge \T v0 \right|^2}{|\T w0|^{p+2}|\T v0|^p |\T vw|^p}  \cdot  \sp{\T w0, \T[h] w0}
  = \G \left(\frac{|\T 0w|}{|w|}, \frac{|\T 0v|}{|v|}, \frac{|\T vw|}{|v-w|}\right)
 \frac{\left|\T w0 \wedge \T v0 \right|^2}{|w|^{p+2}|v|^p |v-w|^p}  \cdot  \sp{\T w0, \T[h] w0} 
 \end{align*}
 where
$
  \G (z_1, z_2, z_2) = z_1^{-p-2} z_2 ^{-p} z_3 ^{-p}
$
together with
 \begin{equation*}
  \frac {\T w0} w \wedge \frac {\T v0} v
  = \left(\frac {\T w0} w- \frac {\T v0} v \right)\wedge \frac {\T v0} v
  = \int_{0}^{1} \int_{0}^1 \left(\gamma'(u + s_1 w) - \gamma'(u+s_1 v) \right) \wedge \gamma'(u+s_3 v) \d s_1 \d s_3
 \end{equation*}
 we see that the first term of $R^1$ is of type 1. Similarly, one gets that all the terms of $R^1$ are
of type 1.

For the term $R^2$ we use
\begin{multline*}
 \sp{P^\bot_{\T 0v} (\T 0w), \T[h] 0w}
 =\frac 1 {|\T 0v|^2} \sp{(\T 0v \wedge \T 0w), (\T 0v \wedge \T[h] 0w)}  \\
 \refeq{abac} \frac {v^{2}w^{2}} {|\T 0v|^2} \idotsint_{[0,1]^4} \gamma'(u+s_1 v) \otimes \gamma'(u+s_2 w) \otimes \gamma'(u+s_3 v) \otimes 
 h'(u+s_4 w) \d s_1 \d s_2 \d s_3 \d s_4
\end{multline*}
together with the fact that for  $w \in \mathbb R$, $u \in \mathbb R / \mathbb  Z$
 \begin{equation} \label{eq:OHaraTypeTerm}
 \begin{split}
  \frac 1{|\T w0|^\alpha} - \frac 1 {|w|^\alpha} &= 2\G[\alpha]\br{\frac{\abs{\T w0}}{|w|}} \frac {1-\frac {\abs{\T w0}^2}{|w|^2}} {|w|^\alpha}\\
  &= \G[\alpha] \br{\frac{\abs{\T w0}}{|w|}} \iint\limits_{[0,1]^2} \frac {|\gamma'(u+s_1 w) - \gamma'(u+s_2 w)|^2}{|w|^\alpha} \d s_1 \d s_2
  \end{split}
  \end{equation}
 where $\G[\alpha] (z) = \tfrac12\cdot\frac {1-z^\alpha}{1-z^2} \cdot z^{-\alpha}$ is analytic on $(0,\infty)$ for $\alpha>0$.
 Both equations together with
\begin{align*}
 \frac 1{\abs{\T vw}^p\abs{\T v0}^{p-2}\abs{\T w0}^p} 
 &- \frac 1{\abs{v-w}^p\abs{v}^{p-2}\abs{w}^p}
\\ &= \left(\frac 1{\abs{\T vw}^p\abs{\T v0}^{p-2}\abs{\T w0}^p} 
 - \frac 1{\abs{\T vw}^p\abs{\T v0}^{p-2}\abs{w}^p} \right)
\\ &+ \left(\frac 1{\abs{\T vw}^p\abs{\T v0}^{p-2}\abs{w}^p} 
 - \frac 1{\abs{\T vw}^p\abs{v}^{p-2}\abs{w}^p} \right)
\\ &+ \left(\frac 1{\abs{\T vw}^p\abs{v}^{p-2}\abs{w}^p} 
 - \frac 1{\abs{v-w}^p\abs{v}^{p-2}\abs{w}^p} \right)
\end{align*}
show that the frist term of $R_2$ is the sum of terms of type 2 to 4. Similarly for the second term
in $R_2$.

Let us turn to the last term, $R_3$.
Again, we restrict to the first term, the second is parallel.
We obtain
\begin{align*}
 \frac{\sp{P^{\top}_{\T v0}
\left( \frac{ \T w0 } w - \frac{ \T v0 } v \right),\frac{\T[h] w0}w}}{\abs{v-w}^p\abs{v}^{p-2}\abs{w}^{p-2}}
&= \frac{\sp{\frac{ \T w0 } w - \frac{ \T v0 } v,\fracabs{\T v0}}}{\abs{v-w}^p\abs{v}^{p-2}\abs{w}^{p-2}}\sp{\fracabs{\T v0} ,\frac{\T[h] w0}w} \\
&= \abs{\frac{\T v0}v}^{-2}\frac{\sp{\frac{ \T w0 } w - \frac{ \T v0 } v,\frac{\T v0}v}}{\abs{v-w}^p\abs{v}^{p-2}\abs{w}^{p-2}} \sp{\frac{\T v0}v ,\frac{\T[h] w0}w}\\
&= \abs{\frac{\T v0}v}^{-2}\frac{\br{\sp{\frac{ \T w0 } w , \frac{ \T v0 } v}-1}}{\abs{v-w}^p\abs{v}^{p-2}\abs{w}^{p-2}} \sp{\frac{\T v0}v ,\frac{\T[h] w0}w} \\
&= -\tfrac12\abs{\frac{\T v0}v}^{-2}\frac{{\abs{\frac{ \T w0 } w - \frac{ \T v0 } v}^{2}}}{\abs{v-w}^p\abs{v}^{p-2}\abs{w}^{p-2}} \sp{\frac{\T v0}v ,\frac{\T[h] w0}w},
\end{align*}
which gives rise to type~4.
\end{proof}

 Our next task is to show that
  $R^p$
 is in fact a lower-order term. More precisely, we have
 
 \begin{proposition}[Regularity of the remainder term]\label{prop:reg-rem}
  If $\g\in\Wia[(3p-4)/2+\s,2]\rzd$ for some $\s\ge0$ then
  $R^p(\g,\cdot)\in\br{\W[3/2-\s+\eps,2]}^{*}$ for any $\eps>0$.
 \end{proposition}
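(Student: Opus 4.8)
The plan is to combine the structural description of $R^{p}$ just obtained with Fourier methods in the spirit of Proposition~\ref{prop:Q}: reduce to a single representative integral term, move the unique factor $\dh$ aside by a change of variables so the term becomes an $L^{2}$-pairing $\int_{\R/\Z}\Psi_{\g}\otimes\dh$, and then read off the assertion from the fractional Sobolev regularity of the $\g$-dependent function $\Psi_{\g}$.

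By the preceding lemma, $R^{p}(\g,\cdot)$ is a finite sum of terms $T$, each a multiple integral over $\R/\Z\times D\times[0,1]^{K}$ of a product of a bounded analytic factor $\G\br{\abs{\T0w}/\abs w,\,\abs{\T0v}/\abs v,\,\abs{\T vw}/\abs{v-w}}$, the weight-and-difference factor $\Gamma$ of one of the four listed types, bounded factors $\dg(\cdots)$, and a single factor $\dh(u+s_{K-1}v+s_{K}w)$ with $s_{j}\in\set{0,\theta_{j}}$; so it suffices to bound one such $T$ by $C(\g)\norm{h}_{\W[3/2-\s+\eps,2]}$. The factor $\G(\cdots)$ is bounded because, by Proposition~\ref{prop:bi-Lipschitz} and arc-length parametrization, its arguments stay in a fixed compact subset of $(0,\infty)^{3}$; the $\dg(\cdots)$ have unit length; and since $\g\in\W[(3p-4)/2+\s,2]\hookrightarrow C^{1,(3p-7)/2+\s}$ with $(3p-7)/2+\s>0$ for $p>\tfrac73$, all of these factors are moreover Hölder continuous in $u$, uniformly in $v,w,\theta$, with an exponent strictly above the target order $\s-1/2-\eps$. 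The auxiliary variables $\theta_{j}$ with $s_{j}=0$ integrate out harmlessly, while those with $s_{j}=\theta_{j}$ are absorbed---after $T$ has been estimated for fixed $\theta$'s---by a substitution of the kind used in~\eqref{eq:intM<seminorm}, the resulting powers $\int_{0}^{1}\theta^{\bullet}\,\d\theta$ being finite in the sub-critical range.

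For the core estimate I would substitute, with $v,w,\theta$ frozen, $x:=u+s_{K-1}v+s_{K}w$, turning $T$ into $\int_{\R/\Z}\Psi_{\g}(x)\otimes\dh(x)\,\d x$ with $\Psi_{\g}(x):=\iint_{D}\idotsint_{[0,1]^{K}}\bv{\,\cdots\,}_{u=x-s_{K-1}v-s_{K}w}\,\d\theta_{1}\cdots\d\theta_{K}\,\d v\,\d w$. By the Fourier characterization of the spaces $\W[s,2]$ and $\widehat{\dh}_{k}=2\pi ik\,\hat h_{k}$, Cauchy--Schwarz reduces everything to showing $\sum_{k\neq0}\abs k^{2(\s-1/2-\eps)}\abs{\widehat{\Psi_{\g}}_{k}}^{2}<\infty$, i.e.\ $\Psi_{\g}\in\W[\s-1/2-\eps,2]$. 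This norm I would estimate by Minkowski's integral inequality, drawing the $x$-norm inside the $v,w,\theta$-integrals; peeling the bounded Hölder factors off by the fractional Leibniz rule (admissible because their Hölder exponent beats $\s-1/2-\eps$; a higher-order product rule where $\s-1/2-\eps\ge1$), which leaves the purely $\dg$-built factor $\Gamma$; and noting that $\Gamma$ is, up to the weight $\abs v^{-(p-2)}\abs w^{-(p-2)}\abs{v-w}^{-p}$, a product of one or two translate-differences of $\dg$. Each such difference obeys the gain estimate $\norm{\dg(\cdot+a)-\dg(\cdot+b)}_{\W[s,2]}\le C\abs{a-b}^{\sigma'}\norm{\dg}_{\W[s+\sigma',2]}$ for $0\le\sigma'\le1$, $s+\sigma'\le(3p-4)/2+\s$, together with its $C^{0,\alpha}$-analogue, converting a power of $\abs{v-w}$ out of the numerator while only lowering the exploited regularity of $\dg$ towards the finite-energy exponent $(3p-4)/2$; since $\br{(3p-4)/2+\s}-\br{\s-1/2-\eps}=(3p-3)/2+\eps>2$ for $p>\tfrac73$, a gain of $\abs{v-w}^{1-\delta}$ per difference is always affordable. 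The residual weight $\abs v^{-(p-2)}\abs w^{-(p-2)}\abs{v-w}^{-(p-\sigma'_{\mathrm{tot}})}$ times $\W$-seminorms of $\dg$ then has a convergent $v,w$-integral over $D$ precisely because $p<\tfrac83$, with $\eps>0$ absorbing the endpoint; and if $\s-1/2-\eps<0$ it even suffices to bound an $L^{r}$-norm of $\Psi_{\g}$ by the same scheme, using $L^{r}\hookrightarrow\W[\s-1/2-\eps,2]$.

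The hard part will be this last bookkeeping: for each of the four $\Gamma$-types and each admissible choice of the shift points $s_{j}\in\set{0,\theta_{j}}$ one must decide how much of the surplus smoothness $\s$ of $\dg$ can be converted into a power of $\abs{v-w}$---at most about one power per difference, since $\W[1+s,2]$ is only defined for $s<1$---and then verify that, once it is extracted, the leftover singular weight remains integrable over $D$ against the $L^{2}$-type factors that are left. This is exactly where the sub-criticality $p<\tfrac83$, cushioned by $\eps>0$ at the borderline, is used up.
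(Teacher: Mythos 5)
Your overall architecture is essentially the paper's, repackaged: both proofs treat $R^{(p)}$ term by term as a pairing of a $\g$-dependent kernel against the single factor $\dh$, shift roughly $\s$ derivatives from $\dh$ onto that kernel by duality (you via Fourier coefficients and Cauchy--Schwarz, the paper via $(-\Delta)^{\pm\tilde\s/2}$ with $\tilde\s=\s-\eps/2$, the pairing of $\W[\tilde\s,1]$ against $L^\infty$, and $\norm{(-\Delta)^{-\tilde\s/2}\dh}_{L^\infty}\le C\norm{h}_{\W[3/2+\eps/2-\tilde\s,2]}$), peel off the bounded factors $\G\br{\cdots}$ and $\dg(\cdots)$ by the product and chain rules of Appendix~\ref{sect:fractional}, and finish with the $D$-integrability computation of~\eqref{eq:intM<seminorm}. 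After your Minkowski step the two proofs are doing the same thing in the same order.

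The one assertion in your sketch that is not right as stated is that ``a gain of $\abs{v-w}^{1-\delta}$ per difference is always affordable'', justified by the surplus $\br{(3p-4)/2+\s}-\br{\s-1/2-\eps}>2$. That surplus is the total regularity of $\g$ above the target order; it cannot be allocated freely between the two translate-differences, because both are differences of the \emph{same} function $\dg\in\W[(3p-6)/2+\s,2]$, and the power extractable from a given factor is capped by the regularity available \emph{in the norm you measure that factor in}. If, as you suggest, one difference is handled in $\W[\cdot,2]$ (gain up to $\min\br{1,(3p-5)/2+\eps}=1$) and the other in $L^\infty$ via H\"older continuity, the second gains only $\abs{v-w}^{(3p-7)/2+\s}$, which is nowhere near $1-\delta$ for $p$ close to $\tfrac73$ and $\s$ small. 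The scheme still closes --- for $\s\le\tfrac12$ the target order $\s-\tfrac12-\eps$ is negative and your $L^r$ fallback (in fact $L^1$ plus Cauchy--Schwarz in $u$) suffices, while for $\s>\tfrac12$ one checks $1+(3p-7)/2+\s>3p-6$ --- but this case distinction is precisely the bookkeeping you defer, and the heuristic you offer for it would mislead you there. The paper's Lemma~\ref{lem:reg-int} sidesteps the issue entirely: it never converts the differences into pointwise powers of $\abs{v-w}$, but keeps $\norm{\dg(\cdot+s_{1}(w-v))-\dg(\cdot)}_{\W[\s,2]}^{2}$ intact, distributes the remaining factors into $\W[\tilde\s,q_i]$ with $\sum_i 1/q_i=1/r$ chosen so that $\W[\s,2]\hookrightarrow\W[\tilde\s,2q_{2}]$, and integrates the squared norm directly against the weight $\abs v^{-(p-2)}\abs w^{-(p-2)}\abs{v-w}^{-p}$ over $D$ exactly as in~\eqref{eq:intM<seminorm}; that is the sharp computation and avoids any per-difference allocation.
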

 
 This statement together with Proposition~\ref{prop:Q} immediately leads to
 the proof of the regularity theorem which is deferred to the end of
 this section.
 
 To prove Proposition~\ref{prop:reg-rem}% for such a term
 , we first note that,
 by partial integration, the terms of $\Re[](\g,h)$ may be transformed into
\begin{align*}
  &\idotsint_{[0,1]^{K}}\iint_{D}\int_{\R/\Z}
   \br{(-\Delta)^{\tilde\s/2}\gp(\cdot,v,w;s_{1},\dots,s_{K-2})}(u) \otimes\br{(-\Delta)^{-\tilde\s/2}\dh}(u+s_{K-1}v+s_{K}w)
   \d u\d v\d w \d\theta_{1}\cdots\d\theta_{K} \\
  &\le \idotsint_{[0,1]^{K}}
  \iint_{D}\norm{\gp (\cdot,v, w;\dots)}_{\W[\tilde\s,1]\rzd}\d v\d w
  \d\theta_{1}\cdots\d\theta_{K-2} \norm{(-\Delta)^{-\tilde\s/2}\dh}_{L^{\infty}\rzd} \\
 &\le \idotsint_{[0,1]^{K}}
  \iint_{D}\norm{\gp (\dots)}_{\W[\tilde\s,1]\rzd}\d v \d w
  \d\theta_{1}\cdots\d\theta_{K-2}\norm{h}_{\W[3/2+\eps/2-\tilde\s,2]\rzd},
 \end{align*}
  where $\tilde\s\in\R$, $\eps>0$ can be chosen arbitrarily, and $(-\Delta)^{\tilde\s/2}$ denotes the fractional
  Laplacian.
  We let $\tilde\s:=0$ if $\s=0$ and $\tilde\s:=\s-\frac\eps2$ otherwise.
  Now the claim directly follows from the succeeding auxiliary result.
 
 \begin{lemma}[Regularity of the remainder integrand]\label{lem:reg-int}
  Let $\g\in\Wia[(3p-4)/2+\s,2]$.
  \begin{itemize}
  \item If $\s=0$ then $\gp\in L^{1}(\R/\Z\times D,\R^{n})$ and
  \item if $\s>0$ then $((v,w) \mapsto \gp(\cdot,v,w;\dots))\in L^{1}(D,\W[\tilde\s,1](\R/\Z, \R^{n}))$ for any $\tilde\s<\s$.
  \end{itemize}
  The respective norms are bounded independently of $s_{1},\dots,s_{K}$.
 \end{lemma}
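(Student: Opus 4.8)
The plan is to reduce everything to the four building-block types for $\Gamma$ listed in the previous lemma and to estimate each of them separately, treating the factor $\G(\cdot)$ applied to the bi-Lipschitz quotients $\abs{\T 0w}/\abs w$ etc.\ as a harmless bounded analytic multiplier (by the bi-Lipschitz estimate these arguments lie in a fixed compact subset of $(0,\infty)$, so $\G$ and all its derivatives are bounded on the relevant range). Likewise, the tensor factors $\bigotimes\dg(u+s_iv)$ are uniformly bounded in $C^0$ and, being differences-free products of $\dg$, contribute only lower-order terms when we later distribute a fractional derivative via a product rule; so the essential object is $\iint_D \norm{\Gamma(\cdot,v,w,s_1,s_2)}_{\W[\tilde\s,1](\R/\Z)}\,\d v\,\d w$ (and for $\s=0$ just the $L^1$-norm over $\R/\Z\times D$).

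First I would do the case $\s=0$: here $\g\in\Wia[(3p-4)/2,2]$. For $\Gamma$ of type~1, bound $\abs{\dg(u+s_1w)-\dg(u+s_1v)}\le[\g]_{C^{0,\alpha}}\abs{s_1(v-w)}^{\alpha}$ is too crude; instead write the difference as $\int_0^1\g''\ldots$ is unavailable, so use the fundamental theorem of calculus inside an $L^1_u$ average and Fubini: $\int_{\R/\Z}\abs{\dg(u+s_1w)-\dg(u+s_1v)}\,\d u\le C\int_{\R/\Z}\abs{\dg(u+\sigma)-\dg(u)}\,\d u$ for a suitable $\sigma$ with $\abs\sigma\simeq\abs{v-w}$, then apply the same Cauchy--Schwarz-into-fractional-seminorm trick already used in~\eqref{eq:intM<seminorm} and~\eqref{eq:esttripleintegral}, and integrate the remaining kernel $\abs v^{-(p-2)}\abs w^{-(p-2)}\abs{v-w}^{-p}$ against $\abs{v-w}^{2\cdot\text{(H\"older gain)}}$ over $D$; the exponent bookkeeping matches because $(3p-4)/2$ is exactly the Sobolev-Slobodecki\u\i\ exponent whose square appears after using Cauchy--Schwarz ($\abs\cdot$ vs.\ $\abs\cdot^2$). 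Types~2--4 are formally identical after relabelling which of $v$, $w$, $v-w$ plays the role of the small increment, since in $D$ one has $\abs v,\abs w\simeq\abs{v-w}\lesssim 1$.

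For $\s>0$ I would proceed the same way but carry one fractional derivative $(-\Delta)^{\tilde\s/2}$ in the $u$-variable through $\Gamma$. The product $\Gamma = (\text{increment}\otimes\text{increment})\cdot(\text{kernel})$ means I must estimate $\norm{(-\Delta)^{\tilde\s/2}\big(\dg(\cdot+a)-\dg(\cdot+b)\big)}_{L^1}$; by translation invariance this equals $\norm{(-\Delta)^{\tilde\s/2}\dg}_{L^1}$-type quantities combined with a difference, and one gains either a factor $\abs{a-b}^{\alpha'}$ (from H\"older continuity of the remaining regularity) or a factor $\abs{a-b}^{\s-\tilde\s}$ (from interpolating between $L^1$ of $(-\Delta)^{\tilde\s/2}\dg$ and the $\W[(3p-4)/2+\s,2]$-seminorm), whichever keeps the $\iint_D$ kernel integral finite. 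Concretely: use the Gagliardo-type characterization $\norm{\dg(\cdot+a)-\dg(\cdot)}_{\W[\tilde\s,1]}\le C\abs a^{\,\s-\tilde\s}\,[\dg]_{\W[\s,1+\text{something}}}$ — more cleanly, embed $\W[(3p-4)/2+\s,2]\hookrightarrow\W[\s+\epsilon',1}$-type spaces and use that a spatial translation by $a$ costs $\abs a^{\,\delta}$ in the lower space $\W[\tilde\s,1]$ when $\tilde\s<\s$. The upshot is again a convergent integral $\iint_D \abs v^{-(p-2)}\abs w^{-(p-2)}\abs{v-w}^{-p}\cdot\abs{v-w}^{\,2\delta + 2\cdot\text{(C-S gain)}}\,\d v\,\d w<\infty$ precisely when $\tilde\s<\s$, with the norm bounds uniform in all $s_j$ because every estimate used only $\abs{s_j}\le1$.

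The main obstacle I anticipate is the exponent accounting in the $\s>0$ case: one must verify that after spending regularity to (i) control the fractional derivative of a \emph{difference} of $\dg$, (ii) apply Cauchy--Schwarz to pass from the $L^1$- to the $L^2$-based Slobodecki\u\i\ seminorm, and (iii) integrate the singular kernel over $D$, the total power of $\abs{v-w}$ (and of $\abs v,\abs w$ separately, using $\abs v,\abs w\gtrsim$ nothing in general — here the constraints $w\le 1+2v$, $v\ge -1+2w$ defining $D$ are what keep $\abs v\simeq\abs w\simeq\abs{v-w}$ away from degeneracy) is strictly greater than $-1$ in each variable. Getting a clean margin (the ``$\tilde\s<\s$'' and ``$\eps>0$'' slack in Proposition~\ref{prop:reg-rem}) is exactly what absorbs the loss, so the bookkeeping should close, but it is delicate and is where I would spend the most care.
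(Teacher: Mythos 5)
Your strategy is essentially the paper's: treat the analytic factor and the tensor factors as controlled multipliers, reduce to the four types of $\Gamma$, and for $\s=0$ convert everything into the substitution-plus-seminorm estimate \eqref{eq:intM<seminorm}; for $\s>0$ distribute a fractional derivative over the product and use the slack $\tilde\s<\s$ to close the kernel integral. Two points in your sketch do not survive scrutiny. First, the assertion that $\abs v\simeq\abs w\simeq\abs{v-w}$ on $D$ is false: for instance $(v,w)=(-0.01,\,0.4)$ satisfies $w\le 1+2v$ and $v\ge-1+2w$, so it lies in $D$ while $\abs v\ll\abs w$. What actually holds is only $\abs v,\abs w\le\abs{v-w}\le\tfrac23$, so types 2--4 cannot be dismissed ``by relabelling''; the paper instead integrates out the variable that is absent from the difference, using
\begin{equation*}
\int_0^{2/3-w}\frac{\d v}{v^{p-2}(v+w)^{p}}\le C\,w^{3-2p}\int_0^\infty\frac{\d t}{t^{p-2}(1+t)^{p}},
\end{equation*}
finite because $p-2<1<2p-2$, which reduces each of these types to the one-dimensional seminorm $\seminorm\g_{\W[(3p-4)/2,2]}$. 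Second, in the case $\s>0$ ``bounded'' is not enough to make $\G(\dots)$ and the factors $\dg(u+s_iv)$ harmless: a bounded function is not automatically a bounded pointwise multiplier on a fractional space. The real content of the paper's proof is precisely the fractional product rule (Lemma~\ref{lem:product}) with the exponent choices $\tfrac1r=1-(\s-\tilde\s)$, $\tfrac1{q_2}=1-2(\s-\tilde\s)$, $q_i=\tfrac{K-3}{\s-\tilde\s}$, combined with the chain rule (Lemma~\ref{lem:chain}) giving $\norm{\G}_{\W[\tilde\s,q_1]}\le C\norm{\dg}_{\W[\tilde\s,q_1]}$ and the embedding $\W[\s,2]\hookrightarrow\W[\tilde\s,2q_2]$ for the difference factor. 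Your alternative of trading $\s-\tilde\s$ for a power $\abs{s_1(w-v)}^{\s-\tilde\s}$ of the translation is viable, but it still requires exactly this multiplier control for the remaining factors, so it does not avoid the step you left unspecified. Neither issue changes the architecture, but both sit exactly where you yourself locate the delicate bookkeeping.
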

 
 \begin{proof}
  Recall that the argument of $\G$ is compact and bounded away from zero.
  Using arc-length parametrization, we immediately obtain for the first type
  \begin{align*}
   \iiint\limits_{\mathbb R / \mathbb Z \times D }\abs{\gp(u,v,w)} \d v \d w \d u &\le C \iiint\limits_{\mathbb R / \mathbb Z \times D } 
   \frac{\abs{\dg(u+s_{1}w)-\dg(u+s_{1}v)}^{2}}{ |w|^{p-2} |v|^{p-2} |w-v|^p} \d w \d v \d u \\
   & =  C \iiint\limits_{\R/\Z \times D } \frac{\abs{\dg(u+s_{1}(w-v))-\dg(u)}^{2}}{ |w|^{p-2} |v|^{p-2} |w-v|^p} \d w \d v \d u \\
   &\refeq[\le]{intM<seminorm} C\norm\g_{\W[(3p-4)/2,2]}^{2}.
   \end{align*}
  For a term of the second type we get
  \begin{align*}
   \iiint\limits_{\mathbb R / \mathbb Z \times D } & \abs{\gp(u,v,w)} \d v \d w \d u \le C \iiint\limits_{\mathbb R / \mathbb Z \times D } 
   \frac{\abs{\dg(u+s_{1}w)-\dg(u+s_{2}w)}^{2}}{ |w|^{p-2} |v|^{p-2} |w-v|^p} \d w \d v \d u \\
   & =  C \int_{\R/\Z} \int_0^{2/3} \frac{\abs{\dg(u+s_{1}w)-\dg(u+s_2 w)}^{2}}{ |w|^{p-2}} \left(\int_0^{2/3-w} \frac 1 {v^{p-2}(v+w)^{p}}\d v\right) \d w \d u \\
   & \le C \int_{\R/\Z} \int_0^{2/3} \frac{\abs{\dg(u+s_{1}w)-\dg(u+s_2 w)}^{2}}{ |w|^{3p-5}} \left(\int_{0}^{\infty} \frac 1 {t^{p-2}(1+t)^{p}}\d t\right) \d w \d u \\
   & \le  C\seminorm\g_{\W[(3p-4)/2,2]}
  \end{align*}
  and of course the same estimate is true for a term of the third kind. For term of type four, we get along the same lines
  \begin{align*}
   \iiint\limits_{\mathbb R / \mathbb Z \times D } & \abs{\gp(u,v,w)} \d v \d w \d u \le C \iiint\limits_{\mathbb R / \mathbb Z \times D } 
   \frac{\abs{\dg(u+v+s_{1}(w-v)-\dg(u+v + s_{2}(w-v))}^{2}}{ |w|^{p-2} |v|^{p-2} |w-v|^p} \d w \d v \d u \\
   & =  C \int_{\rzd} \int_0^{2/3} \frac{\abs{\dg(u+s_{1}(w+v))-\dg(u+s_2 (w+v))}^{2}}{ |w|^{p-2}} \left(\int_{0}^{2/3- w} \frac 1 {(v+w)^{p} v^{p-2}}\d v\right) \d w \d u \\
   & \le C \int_{\rzd} \int_0^{2/3} \frac{\abs{\dg(u+s_{1}\tilde w)-\dg(u+s_2 \tilde w)}^{2}}{ |\tilde w|^{3p-5}} \left(\int_{0}^{\infty} \frac 1 {(t +1)^{p}t^{p-2}}\d t\right) \d w \d u \\
   & \le  C\seminorm\g_{\W[(3p-4)/2,2]}.
  \end{align*}
  
  We prove the second claim only for terms of the first type, as the arguments for all other terms follow the same line of argments.
  We will derive a suitable bound on $\norm{\gp(\cdot,w)}_{\W[\tilde\s,r]}$ for some $r>1$.
  To this end, we choose $q_{1},\dots,q_{K-2}$, which will be determined more precisely later on,
  such that
  \begin{equation*}
    \sum_{i=1}^{K-2} \frac 1 {q_i} = \frac 1 r.
  \end{equation*}
  The product rule, Lemma~\ref{lem:product}, then leads to 
  \begin{align*} 
  \norm{\gp(\cdot,w)}_{\W[\tilde\s,r]} &\le C\norm{\G}_{\W[\tilde\s,q_{1}]}\frac{\norm{\dg(\cdot+s_{1}w)-\dg(\cdot+s_{1}v)}_{\W[\tilde\s,2q_{2}]}^{2}}{\abs v ^{p-2}\abs w^{p-2} \abs{w-v}^p}{\prod_{i=3}^{K-2}\norm\dg_{\W[\tilde\s,q_{i}]}}. 
  \\ & =C\norm{\G}_{\W[\tilde\s,q_{1}]}\frac{\norm{\dg(\cdot+s_{1}(w-v))-\dg(\cdot)}_{\W[\tilde\s,2q_{2}]}^{2}}{\abs v ^{p-2}\abs w^{p-2} \abs{w-v}^p}{\prod_{i=3}^{K-2}\norm\dg_{\W[\tilde\s,q_{i}]}}. 
  \end{align*}
  
  For the second factor, we now choose $q_{2}>r$ so small that
  $\W[\s,2]$ embeds into $\W[\tilde\s,2q_{2}]$.
  To this end, we set $\frac1r:=1-(\s-\tilde\s)$ and $\frac1{q_{2}}:=1-2(\s-\tilde\s)$.
  and $q_{i}:=\frac{K-3}{\s-\tilde\s}$ for $i=1,3,4, \ldots, K-2$.
  
  Then for the first factor we apply the chain rule, Lemma~\ref{lem:chain}.
  Recall that $\G$ is analytic and its argument is bounded below away from zero
  and above by~$1$.
  We infer
  \begin{equation*}
   \norm{\G}_{\W[\tilde\s,q_{1}]}
   \leq C \norm\dg_{\W[ \tilde \sigma ,q_{1}]}.
  \end{equation*}
  The Sobolev embedding gives
  \[ \norm{\dg}_{\W[ \tilde \sigma ,q_{i}]} \le C \norm\g_{\W[(3p-4)/2+\s,2]} \le  C \qquad \text{for } i=1,3,4,\dots,K-2. \]
  Together this leads to
  \[ \norm{\gp(\cdot,v,w)}_{\W[\tilde\s,r]} \le C\frac{\norm{\dg(\cdot+s_{1}(w-v))-\dg(\cdot)}_{\W[\s,2]}^{2}}{\abs v^{p-2}\abs w^{p-2} \abs{w-v}^p} \]
  and finally
  \begin{align*}
     \iint_{D} \norm{\gp(\cdot,w)}_{\W[\tilde\s,r]} \d v \d w
     &\le C \iint_{D} \frac{\norm{\dg(\cdot+s_{1}(w-v))-\dg(\cdot)}_{\W[\s,2]}^{2}}{\abs v^{p-2}\abs w^{p-2} \abs{w-v}^p} \d v \d w \\
     &\refeq[\le]{intM<seminorm} C\norm\g_{{\W[(3p-4)/2,2]}}.
  \end{align*} 
 \end{proof}
 
 \begin{proof}[Theorem~\ref{thm:smooth}]
  We start with the Euler-Lagrange Equation
  \begin{equation}\label{eq:elg}
   \delta\E(\g,h)+\lambda\sp{\dg,\dh}_{L^2} = 0
  \end{equation}
  for any $h\in C^\infty\rzd$
  where $\lambda\in\R$ is a Lagrange parameter stemming from the side condition (fixed length).
  Using~\eqref{eq:Re} this reads
  \begin{equation}\label{eq:elg2}
   12\Q(\g,h) + \lambda\sp{\dg,\dh}_{L^2} + 12\Re[](\g,h) = 0.
  \end{equation}
  Since first variation of the length functional satisfies
  \begin{equation*}
   \sp{\dg,\dh}_{L^2} = \sum_{k\in \mathbb Z} |2\pi k|^2 \sp{\hat\g_{k},\hat h_{k}}_{\C^{d}},
  \end{equation*}
  we get using  Proposition~\ref{prop:Q} that there is a $\tilde c>0$ such that
  \begin{equation}\label{eq:LinearTerm}
  12\Q(\g,h) + \lambda\sp{\dg,\dh}_{L^2} = \sum_{k\in\Z} \tilde\rho_k\sp{\hat \g_k,\hat h_k}_{\C^d}
  \end{equation}
  where
  \[ \tilde\rho_k = \tilde c\abs k^{3p-4} + o\br{\abs k^{p-1}}
     \qquad\text{as }\abs k\nearrow\infty. \]
  Assuming that $\g\in\Wia[(3p-4)/2+\s,2]$ for some $\s\ge0$,
  we infer
  \[ 12\Q(\g,\cdot) + \lambda\sp{\dg,\cdot'}_{L^2} \in \br{\W[3/2-\s+\eps,2]}^{*} \]
   applying Proposition~\ref{prop:reg-rem} to~\eqref{eq:elg2}.
  Equation~\eqref{eq:LinearTerm} implies
  \[ \br{\tilde\rho_{k}\abs k^{-3/2+\sigma-\eps}\hat\g_{k}}_{k\in\Z} \in \ell^{2}. \]
  Recalling that $\tilde\rho_{k}\abs k^{{-3p+4}}$ converges to a positive constant as $\abs k\nearrow\infty$, we are led to
  \[ \g\in\W[\textstyle\frac{3p-4}2+\s+\frac{3p-7}2-\eps]. \]
  Choosing $\eps:=\frac{3p-7}4>0$, we gain a positive amount of regularity
  that does not depend on~$\s$. So by a simple iteration we get $\g \in W^{s,2}$
   for all $s \geq 0$.
  \end{proof}

\begin{appendix}
 % !TEX root = statpointmenger.tex

\section{Product and chain rule}\label{sect:fractional}

As in~\cite{blatt-reiter2}, we make use of the following results which
we briefly state for the readers' convenience.

\begin{lemma}[Product rule]\label{lem:product}
  Let $q_1,\dots,q_{k} \in (1,\infty)$ with $\sum_{i=1}^{k}\frac1{q_{k}}=\frac1r\in(1,\infty)$ and 
  $s > 0$.
  Then, for $f_{i}\in\W[s,q_{i}]\rzd$, $i=1,\dots,k$,
 \begin{equation*}
  \norm{\prod_{i=1}^{k} f_{i}}_{\W[s,r]} \leq C_{k,s} \prod_{i=1}^{k}\norm{f_{i}}_{\W[s,q_{i}]}.
 \end{equation*}
\end{lemma}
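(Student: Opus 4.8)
The statement is a fractional Leibniz rule for Sobolev--Slobodecki{\u\i} spaces on the compact manifold $\R/\Z$, and the plan is to reduce it to the bilinear case and to small smoothness exponent, and then to assemble it from the one-sided Kato--Ponce inequality together with the Sobolev embeddings that are available because the domain is bounded. First I would argue by induction on $k$: writing $\prod_{i=1}^{k}f_{i}=f_{1}\cdot\prod_{i=2}^{k}f_{i}$ and setting $\tfrac1{\tilde q}:=\sum_{i=2}^{k}\tfrac1{q_{i}}\in(0,1)$, the induction hypothesis yields $\prod_{i=2}^{k}f_{i}\in\W[s,\tilde q]$ with norm controlled by $\prod_{i=2}^{k}\norm{f_{i}}_{\W[s,q_{i}]}$, so it suffices to treat $k=2$, i.e.\ to prove $\norm{fg}_{\W[s,r]}\le C\norm{f}_{\W[s,q_{1}]}\norm{g}_{\W[s,q_{2}]}$ whenever $\tfrac1r=\tfrac1{q_{1}}+\tfrac1{q_{2}}<1$.

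Next I would reduce to $s\in(0,1)$. Writing $s=m+\sigma$ with $m\in\N$ and $\sigma\in(0,1)$ and using the characterization of $\W[s,r]$ through the $m$-th derivative together with a Gagliardo-type seminorm of order $\sigma$, the classical Leibniz formula $(fg)^{(m)}=\sum_{j=0}^{m}\binom{m}{j}f^{(j)}g^{(m-j)}$ turns the estimate into finitely many bilinear estimates at smoothness level $\sigma$ for the pairs $f^{(j)}\in\W[s-j,q_{1}]$ and $g^{(m-j)}\in\W[\sigma+j,q_{2}]$. Since on $\R/\Z$ one may lower an integrability exponent for free and raise it at the usual cost in differentiability, both factors can be placed into spaces $\W[\sigma,a_{j}]$, $\W[\sigma,b_{j}]$ with $\tfrac1{a_{j}}+\tfrac1{b_{j}}=\tfrac1r$, while the purely $m$-th-order terms $\norm{fg}_{\W[m,r]}$ are handled by H\"older's inequality and the embedding theorems. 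This brings everything down to the case $s\in(0,1)$, $k=2$.

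For that case I would invoke the one-sided fractional Leibniz (Kato--Ponce) inequality
\[ \norm{fg}_{\W[s,r]}\le C\br{\norm{f}_{\W[s,p_{1}]}\lnorm[p_{2}]{g}+\lnorm[\tilde p_{1}]{f}\,\norm{g}_{\W[s,\tilde p_{2}]}},\qquad \tfrac1r=\tfrac1{p_{1}}+\tfrac1{p_{2}}=\tfrac1{\tilde p_{1}}+\tfrac1{\tilde p_{2}}, \]
and combine it with the embeddings available on the bounded domain: choosing $p_{1}$ slightly below $q_{1}$ gives $\W[s,q_{1}]\hookrightarrow\W[s,p_{1}]$, while $\tfrac1{p_{2}}=\tfrac1r-\tfrac1{p_{1}}$ then lies slightly below $\tfrac1{q_{2}}$, hence is $\ge\tfrac1{q_{2}}-s$, so $\W[s,q_{2}]\hookrightarrow\Lp{p_{2}}$; the second summand is treated symmetrically. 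This yields the desired symmetric bound. The main obstacle, if one insisted on a self-contained argument, is the fractional Leibniz inequality itself (and, to a lesser extent, the compact-domain embeddings used above); its usual proof goes through a Littlewood--Paley / paraproduct decomposition, and I would simply quote it from the literature, exactly as the analogous product rule is used in~\cite{blatt-reiter2}. Everything else is bookkeeping with H\"older's inequality and the standard embeddings for $\W[k+s,\rho]\rzd$.
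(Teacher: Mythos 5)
Your outline is essentially sound, but be aware that the paper does not prove this lemma at all: it is stated ``for the readers' convenience'' and attributed wholesale to Runst and Sickel \cite[Lem.~5.3.7/1~(i)]{RS} (following \cite{blatt-reiter2}). So the core ingredient you propose to quote --- a fractional Leibniz inequality whose proof runs through a Littlewood--Paley/paraproduct decomposition --- is precisely the black box the authors themselves invoke, and your surrounding reductions (induction on $k$ down to the bilinear case, peeling off the integer part of $s$ via the classical Leibniz formula, and trading differentiability for integrability, which is legitimate because $\R/\Z$ is bounded) are correct but strictly speaking unnecessary, since \cite{RS} states the multilinear result for all $s>0$ directly. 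Two cautions on the details. First, you are right not to attempt the tempting ``elementary'' proof of the bilinear case for $s\in(0,1)$, namely writing $(fg)(u+w)-(fg)(u)=\br{f(u+w)-f(u)}g(u+w)+f(u)\br{g(u+w)-g(u)}$ and applying H\"older inside the double integral defining the Gagliardo seminorm: distributing the weight $\abs{w}^{-1-sr}$ over the two factors forces a weight $\abs{w}^{-1}$ onto the $L^{q_2}$-factor, which is logarithmically divergent, so this splitting only yields the endpoint estimate with $\lnorm[\infty]{g}$ and $\lnorm[\infty]{f}$ in place of the $L^{q_2}$- and $L^{q_1}$-norms; the general H\"older-pair version genuinely requires the paraproduct machinery you propose to cite. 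Second, the hypothesis as printed, $\sum_{i}1/q_i=1/r\in(1,\infty)$, allows $r<1$; in that case your induction can produce an intermediate exponent $\tilde q\le1$, for which neither the induction hypothesis nor the bilinear Kato--Ponce inequality in your stated form applies. This is evidently a typo for $r\in(1,\infty)$ --- in the lemma's only application, in the proof of Lemma~\ref{lem:reg-int}, one has $1/r=1-(\s-\tilde\s)<1$ --- but your write-up should state explicitly that it assumes $r>1$ and, at every stage of the induction, $\tilde q>1$.
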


We also refer to Runst and Sickel~{\cite[Lem.~5.3.7/1~(i)]{RS}}. ---
For the following statement, one mainly has to treat $\norm{(D^k\psi)\circ f}_{\W[\s,p]}$ for $k\in\N\cup\set0$ and $\s\in(0,1)$
which is e.g.\@ covered by~\cite[Thm.~5.3.6/1~(i)]{RS}.

\begin{lemma}[Chain rule]\label{lem:chain}
 Let $f\in\W[s,p](\mathbb R / \mathbb Z, \mathbb R^n)$, $s > 0$, $p \in (1,\infty)$.
 If $\psi\in C^\infty(\R)$ is globally Lipschitz continuous and $\psi$ and all its derivatives vanish at~$0$ then $\psi\circ f\in\W[s,p]$ and
 \[ \|\psi \circ f\|_{\W[s,p]} \le C\|\psi\|_{C^{\scriptstyle k}}\|f\|_{\W[s,p]} \]
 where $k$ is the smallest integer greater than or equal to $s$.
\end{lemma}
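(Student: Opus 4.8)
The plan is to reduce the statement to two standard ingredients: the boundedness of the composition operator $v\mapsto\phi\circ v$ on the fractional spaces $\W[\sigma,p]\rzd$, $\sigma\in(0,1)$, for $\phi\in C^{\infty}$ vanishing at the origin together with all its derivatives (Runst and Sickel~\cite[Thm.~5.3.6/1~(i)]{RS}, to be applied to the derivatives $\phi=\psi^{(j)}$), and the product rule, Lemma~\ref{lem:product}. Write $s=\ell+\sigma$ with $\ell:=\lceil s\rceil-1\in\N\cup\set0$, so that $k=\ell+1$ and $\sigma\in(0,1)$ when $s\notin\N$; when $s\in\N$ we instead take $\ell:=s$, $\sigma:=0$, and read ``$\W[0,p]:=L^{p}$'' in what follows. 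We may assume $\norm\psi_{C^{k}}<\infty$, since otherwise there is nothing to show.

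First I would record the elementary consequences of the hypotheses on $\psi$. Because $\norm\psi_{C^{k}}<\infty$, the functions $\psi,\psi',\dots,\psi^{(k-1)}$ are bounded and have bounded derivative, hence are globally Lipschitz, and $\psi^{(j)}(0)=0$ for every $j\ge0$. Therefore, for any measurable $f$ and $0\le j\le k-1$, the pointwise bounds $\abs{\psi^{(j)}(f)}\le\norm{\psi^{(j+1)}}_{\infty}\abs{f}$ and $\abs{\psi^{(j)}(f(x))-\psi^{(j)}(f(y))}\le\norm{\psi^{(j+1)}}_{\infty}\abs{f(x)-f(y)}$ give $\norm{\psi^{(j)}\circ f}_{L^{p}}\le\norm\psi_{C^{k}}\norm f_{L^{p}}$ whenever $f\in L^{p}$, and, inserting the second bound into the Gagliardo seminorm (or simply quoting Runst--Sickel with $\phi=\psi^{(j)}$), $\norm{\psi^{(j)}\circ f}_{\W[\sigma,p]}\le C\norm\psi_{C^{k}}\norm f_{\W[\sigma,p]}$. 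For $k=1$ this is already the assertion; for $s\in\N$ the classical chain rule combined with the $L^{p}$-bound does it; so from now on we may assume $\ell\ge1$ and $\sigma\in(0,1)$.

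For general $s$ I would apply Fa\`a di Bruno's formula to write, for $0\le m\le\ell$, the derivative $(\psi\circ f)^{(m)}$ as a finite sum of terms of the form $(\psi^{(j)}\circ f)\,f^{(b_{1})}\cdots f^{(b_{j})}$, $1\le j\le m$, $b_{i}\ge1$, $\sum_{i}b_{i}=m$, together with the plain term $\psi\circ f$. It then suffices to estimate, for each such product, its $\W[\sigma,p]$-norm when $m=\ell$ and its $L^{p}$-norm when $m<\ell$, by $C\norm\psi_{C^{k}}\norm f_{\W[s,p]}$. Fix a product with $m=\ell$. Since $\ell\ge1>1/p$, the one-dimensional Sobolev embeddings (cf.~\eqref{eq:Wembed}) yield $\W[s,p]\rzd\hookrightarrow C^{0,\sigma}\rzd=\W[\sigma,\infty]\rzd$; hence $f$, and therefore $\psi^{(j)}\circ f$ (which inherits the H\"older modulus of $f$ since $\psi^{(j)}$ is Lipschitz), lies in $C^{0,\sigma}$, and likewise every factor $f^{(b_{i})}$ with $b_{i}<\ell$ lies in $\W[\ell+\sigma-b_{i},p]\hookrightarrow C^{0,\sigma}$. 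By $\sum_{i}b_{i}=\ell$, the only factor that might fail to be H\"older continuous is $f^{(\ell)}$, and it occurs only in the case $j=1$, where one simply has $f^{(\ell)}\in\W[\sigma,p]$. Multiplying these bounds by means of Lemma~\ref{lem:product} — in its standard form in which all but one factor act as $C^{0,\sigma}=\W[\sigma,\infty]$-multipliers — estimates the product by $C\norm\psi_{C^{k}}$ times a product of norms of $f$ and its derivatives, each controlled by $\norm f_{\W[s,p]}$; summing over the finitely many Fa\`a di Bruno terms, and treating the $m<\ell$ terms analogously via the $L^{p}$-bounds, proves $\psi\circ f\in\W[s,p]$ with a bound of the asserted shape.

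The base case is routine; the point that needs care in the general case is the exponent bookkeeping in the Fa\`a di Bruno expansion — one has to check that in each monomial at most one $f$-factor genuinely requires a finite integrability exponent, so that Lemma~\ref{lem:product} (or its $C^{0,\sigma}$-multiplier variant) applies with the remaining factors acting as multipliers; this is exactly what $\ell+\sigma-b_{i}\ge\sigma$ together with $\sum_{i}b_{i}=\ell$ guarantees. The hypothesis that $\psi$ and all its derivatives vanish at the origin enters essentially here: it is what makes $\psi^{(j)}\circ f$ inherit the $\W[\sigma,p]$- resp.\ H\"older-regularity of $f$ (rather than merely its boundedness) and keeps the constant proportional to $\norm\psi_{C^{k}}$.
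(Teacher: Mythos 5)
Your proposal is correct and follows essentially the same route the paper intends: the paper gives no detailed proof here, only the remark that everything reduces (via the Fa\`a di Bruno expansion and the product rule, Lemma~\ref{lem:product}) to the boundedness of $v\mapsto\psi^{(j)}\circ v$ on $\W[\sigma,p]$ for $\sigma\in(0,1)$, which is the cited Runst--Sickel result — exactly your scheme. One small imprecision: multiplication by a merely $C^{0,\sigma}$ function does \emph{not} in general preserve $\W[\sigma,p]$ (the Gagliardo integral picks up a logarithmic divergence, and $C^{0,\sigma}\not\subset\W[\sigma,p]$), but your own hypothesis $\ell\ge1>1/p$ actually places the non-critical factors in $C^{0,\beta}$ with $\beta=\min(1,\ell+\sigma-1/p)>\sigma$ strictly, which is what makes the multiplier step (or, equivalently, the finite-exponent version of Lemma~\ref{lem:product}) go through.
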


\section{Equivalence of fractional seminorms}\label{sect:equiv}

We give a straightforward
proof of the equivalence of two seminorms on the Sobolev-Slobodecki\u{\i} spaces we used in this article.

\begin{lemma}\label{lem:equivalenceOfNorms}
 For $s \in (0,1)$, $p\in[1,\infty)$ the seminorms
 \begin{align}\tag{\ref{eq:Wsemi}}
 \seminorm{f}_{\W[1+s,p]} &:= \br{\int_{{\R/\Z}}\int_{-1/2}^{1/2}
 \frac{\abs{f'(u+w)-f'(u)}^p}{\abs w^{1+sp}} \d w\d u}^{1/p}, \\
 \tag{\ref{eq:Wsemi'}}
 \seminormv{f}_{\W[1+s,p]} &:= \br{\int_{{\R/\Z}}\int_{-1/4}^{1/4}
 \frac{\abs{f(u+w)-2f(u)+f(u-w)}^p}{\abs w^{1+(1+s)p}} \d w\d u}^{1/p}
 \end{align}
 are equivalent on $W^{1,p}$.
\end{lemma}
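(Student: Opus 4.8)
The plan is to prove the two one‑sided estimates
\[ \seminormv{f}_{\W[1+s,p]} \le C\,\seminorm{f}_{\W[1+s,p]} \qquad\text{and}\qquad \seminorm{f}_{\W[1+s,p]} \le C\br{\seminormv{f}_{\W[1+s,p]} + \lnorm[p]f}, \]
which together yield the equivalence of the norms $\norm f_{\W[1,p]}+\seminorm f_{\W[1+s,p]}$ and $\norm f_{\W[1,p]}+\seminormv f_{\W[1+s,p]}$ on $W^{1,p}\rzd$. Both directions rest on the elementary identity
\[ \Delta_w^2 f(u) := f(u+w)-2f(u)+f(u-w) = \int_0^w\br{f'(u+t)-f'(u-t)}\d t \]
and on the translation invariance of $\lnorm[p]\cdot$ on $\R/\Z$. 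I write $\omega(t):=\lnorm[p]{f'(\cdot+t)-f'(\cdot)}$ and $b(t):=\lnorm[p]{\Delta_t^2 f}$, and use evenness in $w$ to restrict all $w$‑integrals to $w>0$.

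The first inequality is elementary. From $\abs{\Delta_w^2 f(u)}\le\int_0^w\br{\abs{f'(u+t)-f'(u)}+\abs{f'(u)-f'(u-t)}}\d t$ and Jensen's inequality in $t$ one gets $\abs{\Delta_w^2 f(u)}^p\le(2w)^{p-1}\int_0^w\br{\abs{f'(u+t)-f'(u)}^p+\abs{f'(u)-f'(u-t)}^p}\d t$; integrating over $u\in\R/\Z$ gives $b(w)^p\le 2^p w^{p-1}\int_0^w\omega(t)^p\d t$. Dividing by $w^{1+(1+s)p}$, integrating over $w\in(0,\tfrac14)$, and applying Fubini together with $\int_t^{1/4}w^{-2-sp}\d w\le\tfrac1{1+sp}t^{-1-sp}$ yields $\seminormv{f}_{\W[1+s,p]}^p\le\tfrac{2^p}{1+sp}\,\seminorm{f}_{\W[1+s,p]}^p$, with no lower‑order term.

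For the reverse inequality I use a Marchaud‑type telescoping. Summing the dyadic telescope of the difference quotients $\tfrac{f(u+2^{-k}w)-f(u)}{2^{-k}w}$, which converge to $f'(u)$ at every Lebesgue point of $f'$, one obtains for a.e.\ $u$
\[ f'(u)=\frac{f(u+w)-f(u)}{w}-\frac1w\sum_{k=0}^\infty 2^k\,\Delta^2_{2^{-k-1}w}f\br{u+2^{-k-1}w}, \]
all series converging absolutely when $f$ is smooth. Applying this at $u+w$ and at $u$ and subtracting, the leading terms combine to $\tfrac1w\Delta_w^2 f(u+w)$, while each remaining term is the first difference at scale $w$ of a second difference at scale $2^{-k-1}w$; bounding the latter in $L^p$ by twice the corresponding second difference (translation invariance) gives $\omega(w)\le C\sum_{k\ge0}\frac{b(2^{-k}w)}{2^{-k}w}$. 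Writing $\beta(t):=t^{-1/p-1-s}b(t)$, so that $\int_0^1\beta(t)^p\d t$ is comparable to $\seminormv{f}_{\W[1+s,p]}^p$ plus a term $\le C\lnorm[p]f^p$ (from $t\in(\tfrac14,1)$, via the crude bound $b(t)\le 4\lnorm[p]f$), one has $w^{-1/p-s}\omega(w)\le C\sum_{k\ge0}2^{-k(1/p+s)}\beta(2^{-k}w)$; since $1/p+s>0$, Minkowski's inequality in $L^p(0,\tfrac12)$ and the substitution $t=2^{-k}w$ reduce the right‑hand side to a geometric series of ratio $2^{-s}<1$. This bounds $\int_0^{1/2}\omega(w)^p w^{-1-sp}\d w$, hence $\seminorm{f}_{\W[1+s,p]}^p$, by $C\br{\seminormv{f}_{\W[1+s,p]}^p+\lnorm[p]f^p}$. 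The estimate is proved first for $f\in C^\infty\rzd$ and then extended to all $f\in W^{1,p}\rzd$ by periodic mollification — which does not increase $\seminormv{\cdot}_{\W[1+s,p]}$ — and Fatou's lemma.

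The main obstacle is this reverse direction: in passing from second differences of $f$ to first differences of $f'$ the naive estimates overcount by a factor that makes the dyadic sum diverge, and it is exactly the telescoping identity that keeps every contribution at its natural scale; the mollification step is needed so that this identity is valid in the first place. The remaining points — replacing pointwise differences by genuine $L^p$ moduli of smoothness where convenient, treating the outer parts of the $w$‑integrals, and the routine convergence statements — are standard.
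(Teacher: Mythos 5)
Your proof is correct, but it takes a genuinely different route from the paper's in the hard direction. You establish a Marchaud-type inequality: the dyadic telescoping of the difference quotients yields the identity $f'(u)=\tfrac{f(u+w)-f(u)}{w}-\tfrac1w\sum_{k\ge0}2^{k}\,\Delta^2_{2^{-k-1}w}f(u+2^{-k-1}w)$ (which I checked; the algebra $g_{k+1}-g_k=-\Delta^2_hf(u+h)/(2h)$ with $h=2^{-k-1}w$ is right), whence $\lnorm[p]{f'(\cdot+w)-f'(\cdot)}\le C\sum_{k\ge0}(2^{-k}w)^{-1}\lnorm[p]{\Delta^2_{2^{-k}w}f}$, and the weighted summation closes by Minkowski because the resulting geometric series has ratio $2^{-s}<1$. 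The paper instead avoids any infinite decomposition: it splits $\int_0^1\br{f'(u+\tau w)-f'(u-\tau w)}\d\tau$ into $\tau\in(0,1-\eps)$ and $\tau\in(1-\eps,1)$, bounds the first piece by $(1-\eps)^{1+s}\seminormv{f}_{\W[1+s,p]}$ via scaling and absorbs it into the left-hand side, and extracts $2^{s}\eps\seminorm{f}_{\W[1+s,p]}$ minus an error of order $\eps^{1+s}\seminorm{f}_{\W[1+s,p]}$ from the second; choosing $\eps$ small gives the \emph{pure} seminorm bound $\seminorm{f}_{\W[1+s,p]}\le C(s)\seminormv{f}_{\W[1+s,p]}$ with an explicit constant. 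Your argument is the classical one and is more robust/general; the paper's absorption trick is shorter and constant-explicit. One small caveat: as written, your reverse inequality carries the additive term $\lnorm[p]{f}$ (from the scales $t\in(\tfrac14,\tfrac12]$ where $\seminormv{\cdot}$ gives no information), so strictly you prove equivalence of the associated norms rather than of the seminorms themselves. This is harmless for every application in the paper, and in fact the extra term can be removed within your scheme: since $\lnorm[p]{f'(\cdot+w)-f'(\cdot)}\le2\lnorm[p]{f'(\cdot+w/2)-f'(\cdot)}$, the portion of $\seminorm{f}_{\W[1+s,p]}$ over $\tfrac14<\abs w<\tfrac12$ is dominated by the portion over $\tfrac18<\abs w<\tfrac14$, so it suffices to run your telescoping for $\abs w\le\tfrac14$, where all scales $2^{-k}w$ stay inside the domain of $\seminormv{\cdot}$. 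The easy direction (Jensen plus Fubini) and the final mollification-and-Fatou density step coincide with the paper's.
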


\begin{proof}
 We first prove the equivalence of the two norms for smooth $f$.
 The fundamental theorem of calculus and the triangle inequality tell us
 \begin{align*}
  \seminormv{f}_{\W[1+s,p]}
  &=
  \left(\int_{\mathbb R / \mathbb Z} \int_{-1/4}^{1/4} 
    \frac{|f(u+w)-2f(u)+f(u-w)|^p}{|w|^{1+(s+1)p}} \d w\d u \right)^{1/p} \\
    &  = \left(  \int_{\mathbb R / \mathbb Z} \int_{-1/4}^{1/4} 
    \frac{|\int_0^1 f'(u+\tau w) - f'(u-\tau w) \d\tau|^p}{|w|^{1+sp}} \d w\d u
    \right)^{1/p}
    \\
    &  \leq \left(  \int_{\mathbb R / \mathbb Z} \int_{-1/4}^{1/4} 
    \frac{|\int_0^1 f'(u+\tau w) - f'(u) \d\tau|^p}{|w|^{1+sp}} \d w\d u
    \right)^{1/p}
    \\
   &  \quad + \left(  \int_{\mathbb R / \mathbb Z} \int_{-1/4}^{1/4} 
    \frac{|\int_0^1 f'(u) - f'(u-\tau w) \d\tau|^p}{|w|^{1+sp}} \d w\d u
    \right)^{1/p}
    \\
    &  = 2 \left(  \int_{\mathbb R / \mathbb Z} \int_{-1/4}^{1/4} \int_0^1 
    \frac{|f'(u+\tau w) - f'(u) |^p}{|w|^{1+sp}} \d\tau \d w\d u
    \right)^{1/p}.
 \end{align*}
Using Fubini's theorem and substituting $\tilde w = \tau w$, we can estimate this 
further by
\begin{align*}
 &2 \left(  \int_{\mathbb R / \mathbb Z} \int_0^1 \tau^{sp}\int_{-\tau/4}^{\tau/4} 
    \frac{|f'(u+\tilde w) - f'(u) |^p}{|\tilde w|^{1+sp}} \d\tilde w \d\tau \d u
    \right)^{1/p} \leq \frac {2}{1+sp} \seminorm f_{W^{1+s,p}}.
\end{align*}
Hence,
\begin{equation*}
 \seminormv f_{W^{1+s,p}} \leq \frac {2}{1+sp} \seminorm f_{W^{1+s,p}}.
\end{equation*}
To get an estimate in the other direction, we calculate
for $\varepsilon >0$
\begin{align*}
\seminormv f_{\W[1+s,p]}
 &=\left(\int_{\mathbb R / \mathbb Z} \int_{-1/4}^{1/4} 
    \frac{|f(u+w)-2f(u)+f(u-w)|^p}{|w|^{1+(s+1)p}} \d w\d u \right)^{1/p} \\
    &  = \left(  \int_{\mathbb R / \mathbb Z} \int_{-1/4}^{1/4} 
    \frac{|\int_0^1 f'(u+\tau w) - f'(u-\tau w) \d\tau|^p}{|w|^{1+sp}} \d w\d u
    \right)^{1/p} \\
    &   \geq \left(  \int_{\mathbb R / \mathbb Z} \int_{-1/4}^{1/4} 
    \frac{|\int_{1-\varepsilon}^1 f'(u+\tau w) - f'(u-\tau w) \d\tau|^p}{|w|^{1+sp}} \d w\d u
    \right)^{1/p}
    \\ & \qquad{}-  \left(  \int_{\mathbb R / \mathbb Z} \int_{-1/4}^{1/4}
    \frac{|\int_0^{1-\varepsilon} f'(u+\tau w) - f'(u-\tau w) \d\tau|^p}{|w|^{1+sp}} \d w\d u
    \right)^{1/p} \\
   & =: I_1 - I_2.
\end{align*}
Substituting $\tilde \tau = \frac {\tau}{1-\varepsilon}$
and $\tilde w = (1-\varepsilon)w$, we get
\begin{align*}
 I_2 &= \left(  \int_{\mathbb R / \mathbb Z} (1-\varepsilon)^{(1+s)p}
    \int_{-(1-\varepsilon)/4}^{(1-\varepsilon)/4}
    \frac{|\int_{0}^1 f'(u+\tilde \tau \tilde w) - f'(u-\tilde \tau \tilde w) \d\tilde \tau|^p}{|\tilde w|^{1+sp}} 
    \d\tilde w \d u
    \right)^{1/p}
    \\
   & \leq (1-\varepsilon)^{1+s} \seminormv f_{W^{1+s,p}}.
\end{align*}
For $I_1$ we observe
\begin{align*}
 I_1 &\geq \left(  \int_{\mathbb R / \mathbb Z} \int_{-1/4}^{1/4}
    \frac{|\int_{1-\varepsilon}^1 f'(u+ w) - f'(u- w) \d\tau|^p}{|w|^{1+sp}} \d w\d u
    \right)^{1/p} \\
    & \quad
    {}- \left(  \int_{\mathbb R / \mathbb Z} \int_{-1/4}^{1/4} 
    \frac{|\int_{1-\varepsilon}^1 f'(u+ \tau w) - f'(u+w) \d\tau|^p}{|w|^{1+sp}} \d w\d u
    \right)^{1/p} \\
    & \quad {}- \left(  \int_{\mathbb R / \mathbb Z} \int_{-1/4}^{1/4} 
    \frac{|\int_{1-\varepsilon}^1 f'(u-\tau w) - f'(u- w) \d\tau|^p}{|w|^{1+sp}} \d w\d u
    \right)^{1/p} \\
    &  = \varepsilon \left(  \int_{\mathbb R / \mathbb Z} \int_{-1/4}^{1/4} 
    \frac{|f'(u+ w) - f'(u- w) |^p}{|w|^{1+sp}} \d w\d u
    \right)^{1/p} \\
    & \quad
    {}- 2\left(  \int_{\mathbb R / \mathbb Z} \int_{-1/4}^{1/4} 
    \frac{|\int_{1-\varepsilon}^1 f'(u+ \tau w) - f'(u+w) \d\tau|^p}{|w|^{1+sp}} \d w\d u
    \right)^{1/p}.
\end{align*}
To bound the first integral from below, we calculate
\begin{align*}
 \seminorm f_{\W[1+s,p]} & = \left(  \int_{\mathbb R / \mathbb Z} \int_{-1/2}^{1/2} 
    \frac{|f'(u+  w) - f'(u) |^p}{|w|^{1+sp}} \d w\d u
    \right)^{1/p}\\
% &\leq \left(  \int_{\mathbb R / \mathbb Z} \int_{-1/2}^{1/2} 
%    \frac{|f'(u+  w) - f'(u + \frac w 2) |^p}{|w|^{1+sp}} \d w\d u
%    \right)^{1/p} + \left(  \int_{\mathbb R / \mathbb Z} \int_{-1/2}^{1/2} 
%    \frac{|f'(u+  \frac w 2) - f'(u) |^p}{|w|^{1+sp}} \d w\d u
%    \right)^{1/p} \\
 &= 2^{-s} \left(  \int_{\mathbb R / \mathbb Z} \int_{-1/4}^{1/4} 
    \frac{|f'(u+ w) - f'(u- w) |^p}{|w|^{1+sp}} \d w\d u
    \right)^{1/p} .
\end{align*}
The second integral can be estimated further
\begin{align*}
   &\int_{\mathbb R / \mathbb Z} \int_{-1/4}^{1/4} 
    \frac{|\int_{1-\varepsilon}^1 f'(u+ \tau w) - f'(u+w) \d\tau|^p}{|w|^{1+sp}} \d w\d u
   \\
   &
   \qquad\leq \varepsilon^{p-1} \int_{\mathbb R / \mathbb Z} \int_{-1/4}^{1/4} \int_{1-\varepsilon}^1
    \frac{| f'(u+ \tau w) - f'(u+w) |^p}{|w|^{1+sp}} \d\tau \d w\d u
   \\
   & 
   \qquad = \varepsilon^{p-1} \int_{\mathbb R / \mathbb Z} \int_{-1/4}^{1/4} \int_{1-\varepsilon}^1
    \frac{| f'(u+ (\tau-1) w) - f'(u) |^p}{|w|^{1+sp}} \d\tau \d w\d u
   \\
   &
   \qquad =  \varepsilon^{p-1}\int_{\mathbb R / \mathbb Z} \int_{-1/4}^{1/4} \int_{0}^{\varepsilon}
    \frac{| f'(u- \tau w) - f'(u) |^p}{|w|^{1+sp}} \d\tau \d w\d u
   \\
   &\qquad\leq \frac {\varepsilon^{sp+p}} {sp+1} \int_{\mathbb R / \mathbb Z} \int_{-1/4}^{1/4}
    \frac{| f'(u+ w) - f'(u) |^p}{|w|^{1+sp}} \d w\d u.
\end{align*}
So we finally arrive at
\begin{align*}
% \seminormv f_{W^{1+s,p}} \geq \varepsilon \left(1- \left(\frac {\varepsilon^{sp}}{sp+1}
% \right)^{1/p}  \right) \seminorm f_{W^{1+s,p}}- (1-\varepsilon)^{s} \seminormv f_{W^{1+s,p}}.
 \seminormv f_{W^{1+s,p}} \geq
 2^{s} \varepsilon \seminorm f_{\W[1+s,p]}
   -2\frac {\varepsilon^{s+1}}{\sqrt[p]{sp+1}} \seminorm f_{\W[1+s,p]}
-(1-\varepsilon)^{1+s} \seminormv f_{W^{1+s,p}}.
\end{align*}
For $\varepsilon = 2^{1-2/s}$ this leads to
\begin{equation*}
 \seminormv f_{W^{1+s,p}} \geq 2^{-1-2/s} \seminorm f_{W^{1+s,p}}.
\end{equation*}

To get the statement for $f\in W^{1,p}$ we use a standard mollifier $\phi \in C^{\infty}(\mathbb R)$
with $\phi\ge0$, bounded support and
$$
 \int_{\mathbb R} \phi \d x = 1.
$$
We set $\phi_{\varepsilon} (x):= \frac 1 \varepsilon \phi(\frac x \varepsilon)$ and
$$
  f_\varepsilon = f \ast \phi_{\varepsilon}.
$$
Then $f_\varepsilon$ converges to $f$ in $W^{1,p}$ and we can hence chose a sequence $\varepsilon_k \rightarrow 0$
such that
$
  f_k := f_{\varepsilon_k}
$
converge pointwise almost everywhere to $f$ and $f_k'$ to $f'$.

Using H\"older's inequality, we see that
\begin{align*}
 \seminorm{f_\varepsilon}_{W^{s+1,p}}^p
     &=  \int_{\mathbb R / \mathbb Z} \int_{-1/2}^{1/2} \frac{|\int_{\mathbb R}(f'(u+w-z)-f'(u-z))  \phi_\varepsilon(z)\d z|^p}{|w|^{1+sp}} \d w\d u
  \\ &\leq \br{\int_{\R}\phi_{\eps}(\tilde z)\d\tilde z}^{p-1}\int_{\mathbb R}\int_{\mathbb R / \mathbb Z} \int_{-1/2}^{1/2} \frac{|(f'(u+w-z)-f'(u-z)) |^p\phi_\varepsilon(z)}{|w|^{1+sp}} \d w\d u\d z
  \\ & = \int_{\mathbb R}\phi_{\eps}( z)\int_{\mathbb R / \mathbb Z} \int_{-1/2}^{1/2} \frac{|(f'(u+w)-f'(u)) |^p}{|w|^{1+sp}} \d w\d u \d z
  \\ & =\seminorm f _{W^{s+1,p}} ^{p}
\end{align*}
for all $\varepsilon >0$. Similarly,
$$
  \seminormv{f_\varepsilon} _{W^{s+1,p}}
  \leq\seminormv f_{W^{s+1,p}}.
$$

Hence Fatou's lemma tells us that
\begin{equation*}
 \seminormv f_{W^{1+s,p}} \leq \liminf_{k \rightarrow \infty} \seminormv{f_k}_{W^{1+s,p}} \leq C \liminf_{k \to\infty} \seminorm{f_k}_{W^{1+s,p}}
 \leq C\seminorm f_{W^{1+s,p}}
\end{equation*}
and
\begin{equation*}
 \seminorm f_{W^{1+s,p}} \leq \liminf_{k \rightarrow \infty}
 \seminorm{f_k}_{W^{1+s,p}} \leq C \liminf_{k \to \infty} \seminormv{f_k}_{W^{1+s,p}} 
 \leq C\seminormv f_{W^{1+s,p}} .
\end{equation*}
This completes the proof of the lemma.
\end{proof}

\end{appendix}

%%%%%%%%%%%%%%%%%%%%%%%%%
% Bibliography
%\bibliography{projekt}
%\bibliographystyle{abbrvhref}

\end{document}